\let\OLDthebibliography\thebibliography
\renewcommand\thebibliography[1]{
  \OLDthebibliography{#1}
  \setlength{\parskip}{0pt}
  \setlength{\itemsep}{0pt plus 0.3ex} }
\numberwithin{equation}{section}
\theoremstyle{plain}
\newtheorem{theorem}{Theorem}[section]
\newtheorem{lemma}[theorem]{Lemma}
\newtheorem{assumption}[theorem]{Assumption}
\theoremstyle{definition}
\newenvironment{remark}{\pushQED{\qed} \remarkbase}{\popQED\endremarkbase}
\renewcommand{\Im}{\mathrm{Im}\,}
\renewcommand{\Re}{\mathrm{Re}\,}
\newcommand{\N}{{\mathbb N}}
\newcommand{\R}{{\mathbb R}}
\newcommand{\C}{{\mathbb C}}
\newcommand{\mB}{\mathcal{B}}
\newcommand{\mC}{\mathcal{C}}
\newcommand{\mE}{\mathcal{E}}
\newcommand{\mF}{\mathcal{F}}
\newcommand{\mG}{\mathcal{G}}
\newcommand{\mL}{\mathcal{L}}
\newcommand{\mS}{\mathcal{S}}
\newcommand{\mN}{\mathcal{N}}
\newcommand{\mT}{\mathcal{T}}
\newcommand{\mZ}{\mathcal{Z}}
\renewcommand{\a}{\alpha}
\renewcommand{\b}{\beta}
\newcommand{\g}{\gamma}
\renewcommand{\d}{\delta}
\newcommand{\e}{\varepsilon}
\newcommand{\ph}{\varphi}
\newcommand{\lm}{\lambda}
\newcommand{\Lm}{\Lambda}
\newcommand{\om}{\omega}
\newcommand{\s}{\sigma}
\newcommand{\pa}{\partial}
\newcommand{\bcr}{\begin{color}{red}}
\newcommand{\ec}{\end{color}}
\newcommand{\bcb}{\begin{color}{blue}}
\newcommand{\bcg}{\begin{color}{green}}
\renewcommand{\oe}{\"o}
\newcommand{\op}{\mathrm{op}}
\newcommand{\aff}{\mathrm{a}} 
\newcommand{\bff}{\mathrm{b}} 
\newcommand{\bffb}{\bff}
\newcommand{\anm}{a} 
\newcommand{\sigmaa}{\sigma_\aff} 
\title{Size of data in implicit function problems 
and singular perturbations for nonlinear Schr\oe{}dinger systems} 
\author{\small{Pietro Baldi and Emanuele Haus}}
\date{} 
\begin{document}

\maketitle

\begin{small}
\textbf{Abstract.}
We investigate a general question about the size and regularity 
of the data and the solutions in implicit function problems with loss of regularity. 
First, we give a heuristic explanation of the fact that the optimal data size 
found by Ekeland and S\'er\'e with their recent non-quadratic version of the Nash-Moser theorem 
can also be recovered, for a large class of nonlinear problems, 
with quadratic schemes. 
Then we prove that this heuristic observation applies to 
the singular perturbation Cauchy problem 
for the nonlinear Schr\oe dinger system 
studied by M\'etivier, Rauch, Texier, Zumbrun, Ekeland, S\'er\'e. 
Using a ``free flow component'' decomposition 
and applying an abstract Nash-Moser-H\oe{}rmander theorem, 
we improve the existing results regarding both the size of the data 
and the regularity of the solutions.

\emph{MSC2010:} 47J07, 35B25, 35Q55, 35G55.
\end{small}

\bigskip

\noindent
\begin{minipage}{11cm}%
\begin{scriptsize}
\tableofcontents
\end{scriptsize}
\end{minipage}

\bigskip

\section{Introduction} 
\label{sec:intro}

This paper is motivated by a general question concerning the size and regularity 
of the data and the solutions in implicit function problems with loss of regularity. 
In the recent work \cite{ES}, Ekeland and S\'er\'e introduce a new iteration scheme 
in Banach spaces 
for solving nonlinear functional equations 
of the form 
\[
F(u) = v
\] 
where the linearized operator $F'(u)$ admits a right inverse that loses derivatives.  
In such situations, a well-established strategy for constructing a solution $u$ 
consists in applying a Nash-Moser iteration, 
essentially based on a quadratic Newton scheme combined with smoothing operators.  
The scheme in \cite{ES} differs from the standard Nash-Moser approach 
in that it is not quadratic, 
and it consists in solving a sequence of Galerkin problems 
by a topological argument (Ekeland's variational principle).
This gives two main improvements with respect to the standard quadratic approach: 
the map $F$ needs not be twice differentiable, 
and a larger ball for the datum $v$ is covered. 

The first point of the present paper is the observation that, 
for operators of the form 
\[
F(u) = L u + \mN(u)
\] 
where $L$ is linear and $\mN(u) = O(\|u\|^\a)$ for some $\a > 1$ 
in a ball $\| u \| \leq R$, 
the same size of the ball for the datum $v$ as in \cite{ES} 
can also be obtained by quadratic Nash-Moser schemes.
In Section \ref{sec:antani}
we explain the heuristics behind this simple, general observation.

In Sections \ref{sec:TZ}-\ref{sec:proof no loss}
we consider the singular perturbation Cauchy problem 
for the nonlinear Schr\oe dinger system 
studied by M\'etivier and Rauch \cite{MR}, 
Texier and Zumbrun \cite{TZ} 
and Ekeland and S\'er\'e \cite{ES}, 
and we rigorously prove that the observation 
of Section \ref{sec:antani} applies to this PDE problem. 
The result of Sections \ref{sec:TZ}-\ref{sec:proof no loss}
is stated in Theorem \ref{thm:no loss}, 
which improves the results in \cite{TZ} and \cite{ES} 
regarding 
the size of the data and also the regularity of the solution: 
for initial data in a Sobolev space $H^s(\R^d)$ 
we prove that the solution of the Cauchy problem 
belongs to $C([0,T], H^s(\R^d))$ with the \emph{same} regularity $s$,
as it is expected, and we give the corresponding estimate 
for the solution in terms of its initial datum. 
For initial data of a special ``concentrating'' form, see \eqref{1012.3},
Theorem \ref{thm:no loss}
also improves the size of the ball for the data
with respect to \cite{TZ} and \cite{ES}, 
see Remark \ref{rem:comparison}.

For initial data of the other special form considered in 
\cite{TZ} and \cite{ES} (``fast oscillating'' data, see \eqref{1012.3}), 
we improve the size of initial data in Theorem \ref{thm:with loss},
which is proved in Sections \ref{sec:L infty}-\ref{sec:proof with loss}.
With respect to Theorem \ref{thm:no loss}, the new ingredient 
is a ``free flow decomposition'' of the unknown, 
which is a natural way of exploiting the interplay 
between the linear and nonlinear part of the system
and the better $L^\infty$ embedding properties of concentrating or highly oscillating free flows 
(see Lemma \ref{lemma:free flow}),
inspired by the ``shifted map'' trick of \cite{TZ}. 
The price to pay for this improvement on the size of data 
is a loss of \emph{one} derivative: for data in $H^s(\R^d)$, 
the solution belongs to $C([0,T],H^{s-1}(\R^d))$.  
Theorem \ref{thm:with loss} improves the results of \cite{TZ} and \cite{ES}
both regarding the regularity of the solution
and the size of the data, see Remark \ref{rem:comparison}.

We point out that the loss of regularity in Theorem \ref{thm:with loss} 
is \emph{not} due to the Nash-Moser iteration: 
the loss of one derivative is introduced 
when solving the linearized Cauchy problem as a triangular system
(see \eqref{lin.with}) in two components, 
which are the ``free flow'' component of the unknown and its correction 
--- the Nash-Moser-H\oe{}rmander Theorem \ref{thm:NMH} 
just replicates the loss of one derivative for the nonlinear problem, 
without introducing additional losses. 
The loss of regularity in Theorem \ref{thm:with loss}
equals exactly the amount of derivatives in the nonlinearity, 
which is 1 in system \eqref{1012.1}.

The main difference between our ``free flow decomposition'' 
and the ``shifted map'' trick of \cite{TZ} is that we treat 
the free flow as an unknown, although it is already 
completely determined by the initial datum of the problem. 
In this way, Theorem \ref{thm:NMH} regularizes the free flow,
introducing just one new dyadic Fourier packet 
at each step of the iteration. 
This is the key ingredient for preserving the regularity 
of the linearized problem in the nonlinear one,
and it is somewhat reminiscent of a similar idea 
in H\oe{}rmander \cite{Geodesy}. 

Technical details of the fact that 
the heuristic observation of Section \ref{sec:antani} 
rigorously applies to Theorems \ref{thm:no loss} and \ref{thm:with loss} 
are contained in Remarks \ref{rem:confirm.1} and \ref{rem:confirm.2}.
Other general observations about the optimization of the data size in Nash-Moser schemes 
are in Remarks \ref{rem:best rescaling} and \ref{rem:specific}.

\medskip

\emph{Acknowledgements}. 
We warmly thank Ivar Ekeland and Eric S\'er\'e for many interesting discussions, 
in Naples and Paris, which have motivated this work. 

\begin{small}
Supported by INdAM -- GNAMPA Project 2019 
``Hamiltonian dynamics and evolution PDEs''
and PRIN 2015 
``Variational methods, with applications to problems in Mathematical Physics and Geometry''.
\end{small}

\section{Large radius with quadratic schemes: an informal explanation}
\label{sec:antani}

Consider a nonlinear problem of the kind 
\[
F(u) = v,
\]
where $v$ is given, $u$ is the unknown, and $F$ is a twice differentiable 
nonlinear operator in some Banach spaces satisfying $F(0) = 0$. Assume that 
for all $u$ in a ball $\| u \| \leq R$ 
the linearized operator $F'(u)$ 
admits a right inverse $\Psi(u)$ satisfying 
\begin{equation} \label{A}
\| \Psi(u) h \| \leq A \| h \|
\quad 
\forall \| u \| \leq R,
\end{equation}
and the second derivative $F''(u)$ satisfies
\begin{equation} \label{B}
\| F''(u)[h,w] \| \leq B \| h \| \| w \| 
\quad 
\forall \| u \| \leq R 
\end{equation}
(in this discussion we ignore completely the questions about loss of derivatives,
and we only care about size). 
As explained in \cite{ES}, the quadratic Newton scheme gives a solution $u$ 
of the equation $F(u) = v$ for all $v$ of size 
\[
\| v \| \lesssim \min \Big\{ \frac{1}{A^2 B} , \frac{R}{A} \Big\},
\]
while, with topological arguments, one can prove the existence of a solution $u$ 
for all $v$ in the larger ball
\[
\| v \| \lesssim \frac{R}{A}.
\]
Our observation is that, for operators $F$ in some large class, 
the two radii are of the same order.

Indeed, assume that $F$ is given by the sum of a linear part $\mL$ 
and a nonlinear one $\mN$, 
\[
F(u) = \mL u + \mN(u).
\]
Assume that $\mN$ satisfies 
\begin{align}
& \| \mN(u) \| \lesssim \| u \|^{p+1}, \notag \\
& \| \mN'(u) h \| \lesssim \| u \|^p \| h \|, \label{stima mN'} \\
& \| \mN''(u)[h,w] \| \lesssim \| u \|^{p-1} \| h \| \, \| w \| \label{stima mN''}
\end{align}
for some $p \geq 1$, for all $u$ in the ball $\| u \| \leq 1$, 
so that 
\[
\| F''(u)[h,w] \| \lesssim \| u \|^{p-1} \, \| h \| \, \| w \|.
\]
Suppose that $\mL$ has a right inverse $\mL_{r}^{-1}$ 
(namely $\mL \mL_r^{-1} = I$)
and that
\begin{equation} \label{kontrazia}
\| \mL_r^{-1} \mN'(u) \| \leq \frac12
\end{equation}
for $u$ sufficiently small, say $\| u \| \leq R$, 
so that, by Neumann series, the linearized operator 
\[
F'(u) = \mL + \mN'(u) = \mL (I + \mL_r^{-1} \mN'(u))
\]
has the right inverse 
\[
\Psi(u) = (I + \mL_r^{-1} \mN'(u))^{-1} \mL_r^{-1},
\]
with
\[
\| \Psi(u) \| \leq 2 \| \mL_r^{-1} \|.
\]
Hence \eqref{A} holds with 
\[
A := 2 \| \mL_r^{-1} \|.
\]
What is the ``intrinsic'' size of $R$? 
By \eqref{stima mN'}, condition \eqref{kontrazia} holds for 
\[
\| \mL_r^{-1} \| \| u \|^p \leq \frac12, \quad \text{i.e.} \quad 
\| u \| \leq \Big( \frac{1}{2 \| \mL_r^{-1} \|} \Big)^{\frac{1}{p}},
\]
therefore we fix 
\begin{equation} \label{R fixed}
R := \Big( \frac{1}{2 \| \mL_r^{-1} \|} \Big)^{\frac{1}{p}} = A^{- \frac{1}{p}}.
\end{equation}
Moreover, by \eqref{stima mN''}, condition \eqref{B} holds with 
\[
B := R^{p-1} = A^{-1 + \frac{1}{p}}.
\]
Thus 
\[
\frac{1}{A^2 B} = A^{-1 - \frac{1}{p}}, 
\quad 
\frac{R}{A} = A^{-1 - \frac{1}{p}},
\]
namely the two balls have the same size. 

\begin{remark}
Even when $\mL_r^{-1} \mN'(u)$ is an unbounded operator, 
so that the right invertibility of $F'(u)$ cannot be directly obtained by Neumann series, 
the heuristic argument above still catches the right size of $R$, 
provided that the invertibility of $F'(u)$ is obtained by a perturbative procedure.
\end{remark}

\section{Application to a singular perturbation problem}
\label{sec:TZ}

Like Ekeland and S\'er\'e in \cite{ES}, 
we consider the Cauchy problem studied by M\'etivier and Rauch \cite{MR} 
and Texier and Zumbrun \cite{TZ}, 
which is a nonlinear system of Schr\oe{}dinger equations arising in nonlinear optics. 
In \cite{MR}, M\'etivier and Rauch prove the existence of local solutions 
of the Cauchy problem, with existence time $T$ converging to $0$ when the 
Sobolev $H^s(\R^d)$ norm of the initial datum goes to infinity. 
In \cite{TZ}, Texier and Zumbrun use a Nash-Moser scheme 
to improve this result, giving a uniform lower bound for $T$ 
for two classes of initial data (concentrating and highly oscillating) 
whose $H^s(\R^d)$ norm goes to infinity. 
In \cite{ES}, Ekeland and S\'er\'e apply their non-quadratic version 
of the Nash-Moser theorem, 
extending the result in \cite{TZ} to even larger initial data.

Like in the aforementioned papers, 
we consider the system 
\begin{equation} \label{1012.1}
\pa_t v_j + i \lm_j \Delta v_j 
= \sum_{k = 1}^N 
\big( b_{jk}(v, \pa_x) v_{k} + c_{jk}(v,\pa_x) \overline{v_{k}} \big),
\quad \ j=1,\ldots,N, 
\end{equation}
where $v = v(t,x) = (v_1, \ldots, v_N) \in \C^N$ is 
the unknown, $(t,x) \in [0,T] \times \R^d$, 
$\lm_1, \ldots, \lm_N$ are constants, 
and $b_{jk}(v,\pa_x)$, $c_{jk}(v,\pa_x)$ are first order differential operators 
\begin{equation} \label{diff.str}
b_{jk}(v,\pa_x) = \sum_{\ell=1}^d b_{\ell jk}(v) \pa_{x_\ell}, 
\quad \ 
c_{jk}(v,\pa_x) = \sum_{\ell=1}^d c_{\ell jk}(v) \pa_{x_\ell}, 
\end{equation}
with $b_{\ell jk}, c_{\ell jk}$ complex-valued $C^\infty$ functions 
of $\Re(v_1), \ldots, \Re(v_N), \Im(v_1), \ldots, \Im(v_N)$
of order 
\begin{equation} \label{order p}
b_{\ell jk}(v) = O(|v|^p), \quad 
c_{\ell jk}(v) = O(|v|^p)
\end{equation}
in a ball around the origin, for some integer $p \geq 1$.

Following \cite{MR}, \cite{TZ} and \cite{ES}, 
we assume these ``transparency conditions'':

\begin{assumption} \label{transpa}
We assume that 

$(i)$ $\lm_1, \ldots, \lm_N$ are real and pairwise distinct;

$(ii)$ for all $j, k$ such that $\lm_j + \lm_{k} = 0$ 
there holds $c_{jk} = c_{kj}$; 

$(iii)$ for all $j$, $b_{jj}$ is real. 
\end{assumption}

Under these assumptions, the Cauchy problem for \eqref{1012.1}
is locally wellposed in the Sobolev space $H^s(\R^d)$ 
for $s > 1 + d/2$ (Theorem 1.5 in \cite{MR}). 
As is natural in the case of general initial data, 
the result in \cite{MR} gives an existence time $T$ going to 0 
as the initial datum goes to $\infty$ in $H^s(\R^d)$. 
In \cite{TZ} and \cite{ES} it is assumed that $p \geq 2$, 
and special initial data 
\begin{equation} \label{1012.2}
v(0,x) = \e^{\s} \aff_\e(x)
\end{equation} 
are considered, either concentrating or fast oscillating 
\begin{equation} \label{1012.3}
\aff_\e(x) = \aff_0(x/\e) \quad \text{(concentrating)}; 
\qquad \quad 
\aff_\e(x) = \aff_0(x) e^{i x \cdot \xi_0 / \e} \quad \text{(oscillating)},
\end{equation} 
with $\xi_0 \in \R^d$, 
and in both cases 
$0 < \e \leq 1$, $\s > 0$, 
$\aff_0 \in H^{s_1}(\R^d)$ for some large $s_1$. 

In \cite{TZ} and \cite{ES} the following results are proved. 

\begin{theorem}[Theorem 4.6 in \cite{TZ}] 
\label{thm:TZ}
Under the assumptions above, let $d,p \geq 2$ and 
\begin{equation} \label{TZ-ineq}
\s > \frac{k_c - \sigmaa - 1}{p+1}
\end{equation}
where $\sigmaa = d/2$ in the concentrating case, 
$\sigmaa = 0$ in the oscillating case, 
and $k_c$ is a constant depending on $(d,p)$.
For $s_1$ large enough, $T > 0$.  
If $\aff_0 \in H^{\bar s}(\R^d)$ for $\bar s$ large enough, 
and $\| \aff_0 \|_{H^{\bar s}}$ is small enough, 
then, for all $\e \in (0,1]$, 
the Cauchy problem \eqref{1012.1}-\eqref{1012.2}-\eqref{1012.3} 
has a unique solution in the space $C^1([0,T], H^{s_1 - 2}(\R^d)) 
\cap C^0([0,T], H^{s_1}(\R^d))$. 
\end{theorem}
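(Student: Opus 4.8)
Since this is Theorem~4.6 of \cite{TZ}, the plan is to reconstruct the Texier--Zumbrun argument: use the special structure of the data \eqref{1012.2}--\eqref{1012.3} to turn the Cauchy problem into a singular perturbation problem in $\e$, remove the fast oscillations by a normal form, and close the (quasilinear) estimates by a Nash--Moser iteration whose constants are uniform in $\e$. First I would rescale. In the concentrating case, substituting $v(t,x)=\e^{\s}w(t,x/\e)$ turns \eqref{1012.1} into a semiclassical equation $\e^{2}\pa_t w + i\Lm\Delta w = \e^{\s p+1}\mN(w)$, with $\Lm=\mathrm{diag}(\lm_1,\dots,\lm_N)$ and $\mN$ of the same structure \eqref{diff.str}--\eqref{order p}, and with datum $w(0,\cdot)=\aff_0$, fixed and small in $H^{\bar s}$. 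In the oscillating case, substituting $v_j(t,x)=\e^{\s}e^{i\psi_j(t,x)/\e^2}w_j(t,x)$ with $\psi_j(t,x)/\e^2 = x\cdot\xi_0/\e + \lm_j|\xi_0|^2 t/\e^2$ the phase propagated by the free flow of the $j$-th equation makes the $O(\e^{-2})$ terms cancel and leaves \eqref{1012.1} with an extra transport term $-(2\lm_j/\e)\,\xi_0\cdot\nabla w_j$ (skew-adjoint, hence harmless for $L^2$ energy), a small prefactor $\e^{\s p}$ in front of the nonlinearity, and fast time oscillations $e^{i(\lm_k\mp\lm_j)|\xi_0|^2 t/\e^2}$ in front of the quadratic-and-higher interactions.

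Next I would reduce the oscillations by a normal form: the non-resonant oscillating interactions are removed by integration by parts in $t$ (a Poincar\'e--Dulac normal form, in the spirit of the ``shifted map'' of \cite{TZ}), each step gaining a power of $\e^{2}$, and the interactions that survive are exactly the resonant ones, occurring for $\lm_j=\lm_k$ (so $j=k$, since the $\lm$'s are distinct) and $\lm_j+\lm_k=0$. Assumption~\ref{transpa}$(ii)$--$(iii)$ is precisely what ensures that, after symmetrizing the system, these resonant terms enter only through an antisymmetric first-order operator that drops out of the $L^2$ energy identity. Linearizing the resulting quasilinear problem at a $w$ in the relevant ball, the linearized Cauchy problem has the form $\pa_t h + i\Lm\Delta h = \mM(w)h + g$, $h|_{t=0}=h_0$, with $\mM(w)$ first order; again by the transparency conditions and a symmetrizer/para-differential argument one obtains tame a priori estimates with a fixed finite loss of derivatives, hence a tame right inverse $\Psi(w)$ of $F'(w)$, all constants being tracked as explicit powers of $\e$ and of the time length $T$.

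These tame estimates are then fed into an abstract Nash--Moser--\Hor\ theorem, which, for data small in a sufficiently high Sobolev norm, produces a solution of the rescaled nonlinear problem on a time interval uniform in $\e$. The smallness condition required by the scheme balances the $\e$-power in front of the nonlinearity, the $\e$-powers in the rescaled norms of the data (reflecting the concentration, $\|\aff_\e\|_{H^{\bar s}}\sim\e^{\sigmaa-\bar s}$ with $\sigmaa=d/2$ or $0$), and the total number $k_c$ of derivatives lost in the normal form and the linearized estimates; optimizing it gives exactly inequality \eqref{TZ-ineq}. Undoing the rescaling yields a solution of \eqref{1012.1}--\eqref{1012.2}--\eqref{1012.3} on $[0,T]$ belonging to $C^1([0,T],H^{s_1-2}(\R^d))\cap C^0([0,T],H^{s_1}(\R^d))$, the gap $\bar s-s_1$ being the Nash--Moser loss; uniqueness follows by applying the $H^s$ well-posedness of \cite{MR} (or a direct Gronwall estimate) to the difference of two solutions.

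I expect the main obstacle to be the quasilinear loss of derivatives. One has to prove that, thanks to Assumption~\ref{transpa}, both the normal-form reduction and the linearized energy estimates lose only a fixed finite number of derivatives --- this is where the symmetrization and the structure of the resonances are essential --- and then to track the $\e$-dependent constants through the iteration carefully enough to reach the sharp threshold \eqref{TZ-ineq} rather than a weaker one; the bookkeeping of the competing $\e$-powers (nonlinearity size versus data size versus derivative loss) is the delicate part.
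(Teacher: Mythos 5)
This statement is not proved in the paper at all: it is quoted verbatim as Theorem 4.6 of \cite{TZ} (see also Remark \ref{rem:comparison}), so there is no in-paper proof to compare with; the closest material is the TZ-style machinery reproduced in Section \ref{sec:general}. Measured against that, your outline captures the global strategy correctly (semiclassical scaling/norms, transparency conditions governing the resonances, tame estimates for the linearized Cauchy problem, an abstract Nash--Moser iteration with $\e$-uniform constants), but the reduction you propose is not the one actually used. In \cite{TZ}, and in Section \ref{sec:general} here, the nonresonant interactions are removed by the \emph{spatial} pseudodifferential conjugation $h=(I+\e\,\op_\e(M))\ph$ with $M$ as in \eqref{def M}, which cancels $B_{nr}$ through the commutator identity \eqref{homo.eq} after the low-frequency part $B_{lf}$ has been cut off by $\chi$ to avoid the $|\xi|^{-2}$ small divisor; there is no integration by parts in time, and in the oscillating case no conjugation by phases $e^{i\psi_j/\e^2}$ --- both data classes in \eqref{1012.3} are handled uniformly through the $H^s_\e$ norms and the $L^\infty$ properties of the free flow, the ``shifted map'' entering only as the choice of the starting point of the iteration. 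Your time-oscillatory normal form would in any case have to deal with the low-frequency region, where no gain from oscillation is available; that is exactly why the cutoff $\chi$ and the separate treatment of $B_{lf}$ appear in \cite{TZ}.

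The genuine gap is quantitative: the entire content of the theorem is the threshold \eqref{TZ-ineq} with its specific constant $k_c$ (subject to $k_c\geq 6$ and $k_c\geq 3+\tfrac{d}{2}\tfrac{p}{p-1}$, as recalled in Remark \ref{rem:comparison}), and your proposal never derives it --- ``optimizing gives exactly \eqref{TZ-ineq}'' is asserted, not computed. To turn the sketch into a proof you would have to exhibit the explicit $\e$-powers in the tame bound for the right inverse of the linearized operator and in the second-derivative estimate (the analogues of \eqref{NS.Psi.1}, \eqref{NS.P''.1} or \eqref{est Psi.7}, \eqref{est.der.sec.mZ} for the theorems this paper does prove), feed them into the smallness condition of the Nash--Moser theorem (as in \eqref{qui.02} and the parameter lists \eqref{LMd.1}, \eqref{LMd.2}), and then check that data of size $\e^{\s+\sigmaa}$ in the relevant norm fall inside the resulting radius; the exponent $k_c$ encodes precisely the derivative losses and scheme parameters entering that computation. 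Without this bookkeeping the inequality \eqref{TZ-ineq} --- i.e.\ the statement itself --- is not established; the uniqueness part, by contrast, is indeed routine via \cite{MR} or a Gronwall argument, as you say.
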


The constant $k_c$ in \eqref{TZ-ineq} satisfies 
$k_c \geq \max \{ 6, k_c \geq 3 + \frac{dp}{2(p-1)} \}$, 
see Remark \ref{rem:comparison}.

\begin{theorem}[Theorem 6 in \cite{ES}] 
\label{thm:ES}
Under the assumptions above, let $d,p \geq 2$ and 
\begin{equation} \label{ES-ineq}
\s > \frac{d}{2} \frac{p}{p-1} - \sigmaa
\end{equation}
where $\sigmaa = d/2$ in the concentrating case, 
and $\sigmaa = 0$ in the oscillating case. 
Let $s_1 > d/2 + 4$ and $T > 0$.  
If $\aff_0 \in H^{\bar s}(\R^d)$ for $\bar s$ large enough, 
and $\| \aff_0 \|_{H^{\bar s}}$ is small enough, 
then, for all $\e \in (0,1]$, 
the Cauchy problem \eqref{1012.1}-\eqref{1012.2}-\eqref{1012.3} 
has a unique solution in the space $C^1([0,T], H^{s_1 - 2}(\R^d)) 
\cap C^0([0,T], H^{s_1}(\R^d))$. 
\end{theorem}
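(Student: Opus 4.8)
The plan is to recast the Cauchy problem \eqref{1012.1}-\eqref{1012.2}-\eqref{1012.3} as a functional equation $F(u) = v$ of the type discussed in Section \ref{sec:antani}, with $F(u) = \mL u + \mN(u)$: here $\mL$ is the linearized free Schr\oe dinger Cauchy operator $u \mapsto \big( (\pa_t u_j + i \lm_j \Delta u_j)_{j}, \, (u_j|_{t=0})_j \big)$, which admits the right inverse furnished by the free propagators $e^{-i\lm_j t \Delta}$ and the Duhamel formula, with $\|\mL_r^{-1}\| = O(1)$ on $C^0([0,T],H^s)$, and $\mN$ collects the quasilinear terms $\sum_k ( b_{jk}(u, \pa_x) u_k + c_{jk}(u, \pa_x) \overline{u_k} )$. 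By \eqref{diff.str}-\eqref{order p} the map $\mN$ is homogeneous of degree $p+1$ near the origin and loses exactly one derivative, so that $\mL_r^{-1} \mN'(u)$ is an unbounded operator and the right inverse $\Psi(u)$ of $F'(u) = \mL + \mN'(u)$ cannot be produced by a Neumann series: it must be constructed through a perturbative energy argument, exactly the situation described in the remark at the end of Section \ref{sec:antani}.

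Before doing so I would pass to the variables adapted to the data in \eqref{1012.3}, i.e.\ rescale so that the profile $\aff_0$ is normalized to size one. In the new variables the linear operator keeps essentially its form while the nonlinear part $\mN$ acquires a prefactor $\e^{\theta}$, with $\theta = \theta(\s, d, p, \sigmaa)$ an explicit affine function of $\s$ (the parameter $\sigmaa \in \{0, d/2\}$ distinguishing the concentrating and oscillating cases of \eqref{1012.3}). The inequality $\s > \frac d2 \frac p{p-1} - \sigmaa$ is precisely the condition that $\theta$ be positive by the margin consumed by the scheme below, i.e.\ that after rescaling the nonlinearity be a genuine small perturbation of $\mL$, uniformly for $\e \in (0,1]$; tracking this single exponent through the whole argument is what produces the stated size of the data.

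The analytic core is then the solution of the linearized Cauchy problem $F'(u) h = g$ for $u$ in a low-norm ball $\|u\|_{H^\m} \leq R$ (with $\m > d/2 + 1$ and $R = O(1)$ after rescaling). Here the transparency conditions of Assumption \ref{transpa} enter exactly as in M\'etivier--Rauch \cite{MR}: part $(i)$ lets one diagonalize the principal symbols $\lm_j \Delta$; parts $(ii)$-$(iii)$ guarantee that, after the standard change of unknown conjugating away the first-order terms, the resulting system is symmetrizable and obeys an $H^s$ energy estimate with constants depending only on low norms of $u$. Carried out in the rescaled variables, this produces a right inverse $\Psi(u)$ of $F'(u)$ with a tame bound of the form $\|\Psi(u) g\|_{s} \lesssim \|g\|_{s+1} + \|u\|_{s+1}\|g\|_{\m+1}$, uniformly in $\e$ --- a loss of one derivative, but no negative power of $\e$ --- together with the companion second-order estimate for $F''(u)$ coming from \eqref{stima mN''} and the Lipschitz dependence $u \mapsto \Psi(u)$ in the same scale.

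Finally I would feed these tame estimates into an abstract Nash--Moser scheme --- the non-quadratic one of \cite{ES}, or, in accordance with the heuristics of Section \ref{sec:antani}, a quadratic Nash--Moser--H\oe{}rmander scheme with smoothing operators, which in the rescaled variables covers the same data ball --- to obtain a solution $u \in C^0([0,T], H^{s_1})$ for every $\e \in (0,1]$, provided $\|\aff_0\|_{H^{\bar s}}$ is small and $\bar s$ is large enough to supply the regularity budget of the scheme; undoing the rescaling gives a solution $v \in C^1([0,T],H^{s_1-2}) \cap C^0([0,T],H^{s_1})$, and uniqueness follows from an $H^\m$ energy estimate for the difference of two solutions and Gronwall. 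I expect the third step to be the main obstacle: a crude application of the M\'etivier--Rauch estimate to the rescaled system produces spurious negative powers of $\e$, and the transparency conditions must be exploited \emph{after} the rescaling to cancel them; this cancellation, together with the choice of exponents in the second step, is exactly what forces the threshold $\s > \frac d2 \frac p{p-1} - \sigmaa$.
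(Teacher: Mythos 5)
This statement is not proved in the paper at all: it is Theorem 6 of Ekeland--S\'er\'e, quoted only for comparison (the paper's own results, Theorems \ref{thm:no loss} and \ref{thm:with loss}, are proved by a different quantitative analysis and in fact supersede its existence part, with thresholds $\frac1p+\frac d2-\sigmaa$ and $\frac{1+d/2-\sigmaa}{p}$ that are strictly below $\s_{\text{ES}}$). So your proposal has to stand on its own, and as it stands it has a genuine gap exactly at its quantitative core. You assert that after rescaling the nonlinearity carries a prefactor $\e^{\theta}$ with $\theta$ an explicit affine function of $\s$, and that \eqref{ES-ineq} is ``precisely'' the positivity of $\theta$; this is never derived, and the paper's own bookkeeping shows it is not what such a rescaling argument yields. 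Concretely, in the semiclassical frame the nonlinearity is $\e^{-1}B(u,\e\pa_x)u$ with $B=O(|u|^p)$, the $L^\infty$ embedding costs $\e^{-d/2}$, and the data have size $\e^{\s+\sigmaa}$ in $H^s_\e$; demanding that the perturbative inversion of the linearized operator close (conditions of the type \eqref{ansatz.marzo.1}, \eqref{NS.ball.3}) produces the exponent $q=\frac1p+\frac d2$ and hence the threshold $\s+\sigmaa>q$, not $\s+\sigmaa>\frac d2\frac p{p-1}$. The exponent $\frac d2\frac p{p-1}-\sigmaa$ in \eqref{ES-ineq} comes from the specific constants of the Ekeland--S\'er\'e surjection theorem applied with their shifted map, and cannot simply fall out of ``$\theta>0$''; without an actual computation of $\theta$ and of the constants $A$, $R$, and the loss structure of the scheme, the stated threshold is unproven.

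Relatedly, your claim that the right inverse $\Psi(u)$ of $F'(u)$ satisfies a tame bound ``uniformly in $\e$ --- a loss of one derivative, but no negative power of $\e$'' is not correct as stated: the energy estimates for the linearized Cauchy problem (cf.\ \eqref{NS.energy.h}) carry explicit factors $\e^{-1-pd/2}$ multiplying the low-norm terms, and these are neutralized only by restricting $u$ to a ball of $\e$-dependent radius --- which is precisely the step where the threshold on $\s$ is generated. You acknowledge in your last sentence that spurious negative powers of $\e$ appear and must be cancelled ``after the rescaling'', but you defer that cancellation; it is not a technical afterthought, it is the proof. If you carry the exponent bookkeeping out correctly (as in Sections \ref{sec:general}--\ref{sec:proof with loss}, with or without the free-flow decomposition), you obtain existence under the weaker conditions \eqref{soglia.no.loss} or \eqref{soglia.with.loss}, which would give the existence part of the quoted theorem a fortiori; but the uniqueness claim and the precise regularity class $C^0([0,T],H^{s_1})\cap C^1([0,T],H^{s_1-2})$ for data merely in $H^{\bar s}$ with $\bar s$ ``large enough'' would still need the separate low-norm energy/Gronwall argument you only gesture at.
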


Following \cite{TZ}, we introduce the ``semi-classical'' 
Sobolev norms  
\begin{equation} \label{1012.4}
\| f \|_{H^s_\e} 
:= \| (-\e^2 \Delta + 1)^{s/2} f \|_{L^2(\R^d)}
= \| (1 + |\e \xi|^2)^{s/2} (\mF f)(\xi) \|_{L^2(\R^d_\xi)}, 
\quad \ s \in \R, 
\end{equation}
where $\mF$ is the Fourier transform on $\R^d$, and $0 < \e \leq 1$. 
The first theorem we prove in this paper is the following. 

\begin{theorem} \label{thm:no loss}
(i) \emph{(Existence)}
In the assumptions above, let $T > 0$, $p \geq 1$, $d \geq 1$, and $s_1 > d/2 + 4$. 
Then there exist constants $C, C' > 0$, $\e_0 \in (0,1]$,
depending on $T,p,d,s_1$ and on $\lm_j, b_{jk}, c_{jk}$ in system \eqref{1012.1},
such that 
for all $\e \in (0,\e_0]$, 
for all initial data $v_0 \in H^{s_1}(\R^d)$ in the ball 
\begin{equation} \label{NS.ball.v0}
\| v_0 \|_{H^{s_1}_\e} \leq C \e^q, \quad \ 
q := \frac{1}{p} + \frac{d}{2}, 
\end{equation}
the Cauchy problem for system \eqref{1012.1} with initial data $v(0,x) = v_0(x)$ 
has a solution 
\[
v \in C^0([0,T], H^{s_1}(\R^d)) \cap C^1([0,T], H^{s_1 - 2}(\R^d)),
\] 
which satisfies
\[
\sup_{t \in [0,T]} \| v(t) \|_{H^{s_1}_\e} 
+ \e^2 \sup_{t \in [0,T]} \| \pa_t v(t) \|_{H^{s_1-2}_\e} 
\leq C' \| v_0 \|_{H^{s_1}_\e}.
\]

(ii) \emph{(Higher regularity)}
If, in addition, $v_0 \in H^s(\R^d)$ for $s > s_1$, 
then 
\[
\sup_{t \in [0,T]} \| v(t) \|_{H^s_\e} 
+ \e^2 \sup_{t \in [0,T]} \| \pa_t v(t) \|_{H^{s-2}_\e} 
\leq C_s \| v_0 \|_{H^s_\e}
\]
where $C_s$ depends on $s$ (and it is independent of $\e, v_0, v$).

(iii) \emph{(Initial data of special form)}
In particular, initial data $v_0$ of the form 
\eqref{1012.2}-\eqref{1012.3}, with $\| \aff_0 \|_{H^{s_1}(\R^d)} \leq 1$, 
belong to the ball \eqref{NS.ball.v0} for all $\e$ sufficiently small 
if $\s + \sigmaa > q$, namely
\begin{equation} \label{soglia.no.loss}
\s > \frac{1}{p} + \frac{d}{2} - \sigmaa,
\end{equation}
where $\sigmaa = d/2$ in the concentrating case 
and $\sigmaa = 0$ in the oscillating case.
\end{theorem}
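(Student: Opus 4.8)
\textbf{Proof strategy for Theorem \ref{thm:no loss}.}
The plan is to apply an abstract Nash-Moser-H\"ormander theorem (the ``Theorem \ref{thm:NMH}'' referred to in the introduction) to the map $F(v) := (\pa_t v + i\lm_j\Delta v_j - \sum_k(b_{jk}(v,\pa_x)v_k + c_{jk}(v,\pa_x)\overline{v_k}))_{j}$, viewed together with the trace at $t=0$, on a scale of spaces built from the semiclassical norms \eqref{1012.4}. Writing $F(v) = \mL v + \mN(v)$ with $\mL v = \pa_t v + i\lm_j\Delta v_j$ the (diagonal) linear Schr\"odinger part and $\mN(v)$ the quasilinear first-order term, the first task is to set up the right-inverse of the linearized operator $F'(v)$ at a small $v$. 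The key quantitative point is that one should use the $\e$-dependent norms throughout: because the nonlinearity carries one spatial derivative, each factor $\pa_{x_\ell}$ costs a factor $\e^{-1}$ when passing from $\|\cdot\|_{H^s}$ to $\|\cdot\|_{H^s_\e}$, while \eqref{order p} gives a gain $|v|^p$; solving the linearized Cauchy problem by the standard symmetrizer / energy estimate for Schr\"odinger-type systems (using Assumption \ref{transpa} to control the antisymmetric part of the principal symbol) yields a right inverse $\Psi(v)$ satisfying an estimate of the form $\|\Psi(v)h\|_{H^s_\e} \lesssim \e^{-1}(\|h\|_{H^{s+?}_\e} + \text{(trace term)})$ on $[0,T]$, uniformly in $\e$, provided $\|v\|$ is below the threshold $R$ dictated by the heuristic of Section \ref{sec:antani}. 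With the derivative-counting of that section, $A \sim \e^{-1}$ (one lost power of $\e$ from the one derivative in $\mN'$), $R \sim A^{-1/p}\sim \e^{1/p}$ from \eqref{R fixed}, and the admissible data size is $R/A \sim \e^{1/p}\cdot\e = \e^{1+1/p}$ in the flat norm — but the flat $H^{s_1}$ norm and the semiclassical $H^{s_1}_\e$ norm of the datum differ by up to $\e^{-d/2}$ for concentrating/oscillating profiles (more precisely, $\|\aff_\e\|_{H^{s_1}} \sim \e^{-d/2}\|\aff_0\|$ in the concentrating case), which converts the threshold into $\|v_0\|_{H^{s_1}_\e}\lesssim \e^q$ with $q = \frac1p + \frac d2$, matching \eqref{NS.ball.v0}.

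Concretely the steps are: (1) rescale by setting $v(t,x) = \e^{\cdot} w(t, x/\e)$ or, equivalently, work directly with the semiclassical calculus so that the Schr\"odinger group $e^{-it\lm_j\e^2\Delta}$ has $\e$-uniform mapping properties and all constants become $\e$-independent; (2) verify the tame estimates and quadratic-type estimates for $\mN$ on the scale $\{H^s_\e\}$, i.e. the analogues of \eqref{stima mN'}--\eqref{stima mN''}: $\|\mN(v)\|_{H^s_\e}\lesssim \e^{-1}\|v\|^{p}_{H^{s_1}_\e}\|v\|_{H^{s+1}_\e}$ and so on, using Moser-type product and composition estimates in $H^s_\e$ plus \eqref{order p} (this uses $s_1 > d/2 + 4$ so that $H^{s_1}_\e$ is a tame algebra and controls the coefficients of the differential operators in $L^\infty$); (3) establish the linearized solvability with loss: show $F'(v)$ has a right inverse losing a fixed number of derivatives and a fixed power of $\e$, uniformly for $v$ in the ball of radius $R$ — the transparency Assumption \ref{transpa} enters exactly here, to build a symmetrizer making the energy estimate closed despite the first-order coefficients; (4) feed (2)--(3) into Theorem \ref{thm:NMH} to get, for data in the ball \eqref{NS.ball.v0}, a solution $v\in C^0([0,T],H^{s_1}_\e)\cap C^1([0,T],H^{s_1-2}_\e)$ with the asserted linear bound by $\|v_0\|_{H^{s_1}_\e}$ (the factor $\e^2$ on $\pa_t v$ is just the reading of $\pa_t v = -i\lm_j\Delta v_j + \mN(v)$, where $\Delta$ costs $\e^{-2}$ in the semiclassical scale); (5) for part (ii), re-run the fixed point / a priori estimate at the higher index $s$: once the solution exists, its $H^s_\e$ norm is propagated by the same energy estimate linearized at the already-constructed $v$, giving $\sup_t\|v(t)\|_{H^s_\e}\lesssim_s \|v_0\|_{H^s_\e}$ by a Gronwall argument on $[0,T]$; (6) for part (iii), simply compute $\|\e^\s\aff_\e\|_{H^{s_1}_\e}$: in the concentrating case $\|\aff_0(\cdot/\e)\|_{H^{s_1}_\e} = \e^{d/2}\|\aff_0\|_{H^{s_1}}$ exactly (the semiclassical norm is scale-invariant under $x\mapsto x/\e$), and in the oscillating case $\|\aff_0 e^{ix\cdot\xi_0/\e}\|_{H^{s_1}_\e}\lesssim \|\aff_0\|_{H^{s_1}}$ since the modulation is absorbed by $(1+|\e\xi|^2)^{s_1/2}$; hence $\|v_0\|_{H^{s_1}_\e}\lesssim \e^{\s+\sigmaa}$ with $\sigmaa$ as stated, which is $\le C\e^q$ for small $\e$ precisely when $\s + \sigmaa > q$, i.e. \eqref{soglia.no.loss}.

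The main obstacle is step (3): obtaining the right-inverse of the linearized operator with a loss that is \emph{uniform in $\e$} and only finitely many derivatives. The linearized system is a variable-coefficient Schr\"odinger system with genuinely first-order (hence, in the semiclassical scaling, $O(\e^{-1})$) antisymmetric perturbations, so naive energy estimates lose a power of $\e$ per unit time and blow up as $\e\to 0$; the transparency conditions of Assumption \ref{transpa} are exactly what allows the construction of a para-differential symmetrizer (or, equivalently, a normal-form change of variables) that cancels the dangerous part of the first-order term and closes the estimate with an $\e$-independent constant on the fixed interval $[0,T]$ — this is the technical heart inherited from \cite{MR}, \cite{TZ}, and it has to be redone carefully in the semiclassical norms and with tame dependence on $v$. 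A secondary but real difficulty is the bookkeeping of the number of lost derivatives: one must check that solving the linearized Cauchy problem (forward in $t$ for the Schr\"odinger evolution, which is itself lossless in $x$ thanks to the dispersive structure) genuinely loses \emph{zero} net spatial derivatives — so that Theorem \ref{thm:NMH} returns a solution at the same regularity $s_1$ as the datum, which is the whole point of the ``no loss'' in the theorem's name — and that the only genuine cost is the fixed power of $\e$, not of $s$.
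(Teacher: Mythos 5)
Your strategy coincides with the paper's proof: the paper likewise applies the abstract Nash--Moser--H\"ormander Theorem \ref{thm:NMH} to the operator $u\mapsto(\pa_t u+P(u),\,u(0))$ on the semiclassical scale $H^s_\e$, inverts the linearized Cauchy problem with zero derivative loss via the Texier--Zumbrun normal form permitted by Assumption \ref{transpa} (Lemma \ref{lemma:inv senza W}), rescales the norms so that the admissible data ball is exactly $\|v_0\|_{H^{s_1}_\e}\le C\e^q$, and obtains part (iii) from the same computation $\|\e^\s\aff_\e\|_{H^s_\e}\lesssim\e^{\s+\sigmaa}\|\aff_0\|_{H^s}$. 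The only minor differences are that the paper deduces higher regularity from the higher-regularity clause of Theorem \ref{thm:NMH} rather than re-running the energy estimate at level $s$, and its constant bookkeeping (Remark \ref{rem:specific}) arrives at the exponent $q$ somewhat differently from your $A\sim\e^{-1}$, $R/A\sim\e^{1+1/p}$ heuristic, though the resulting threshold is the same.
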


In the next theorem we deal with the case $p \geq 2$, 
where the power $p$ of the nonlinearity 
is used to improve the lower bound for $\s$, 
at the price of a loss of 1 derivative in the solution
with respect to the regularity of the datum. 

\begin{theorem}
\label{thm:with loss}
(i) \emph{(Existence)}
In the assumptions above, let $T > 0$, $p \geq 2$, $d \geq 1$, 
$s_1 > \max \{ d + 4, 6 \}$, 
and 
\begin{equation} \label{soglia.with.loss}
\s > \frac{1 + d/2 - \sigmaa}{p}
\end{equation}
where $\sigmaa = d/2$ in the concentrating case 
and $\sigmaa = 0$ in the oscillating case.

Then there exist constants $C > 0$, $\e_0 \in (0,1]$,
depending on $T,p,d,s_1$, on $\lm_j, b_{jk}, c_{jk}$ in system \eqref{1012.1},
and on the difference $\s - (1+d/2-\sigmaa)/p$, 
such that 
for all $\e \in (0,\e_0]$, 
for all functions $\aff_0 \in H^{s_1}(\R^d)$ in the ball 
\begin{equation} \label{NS.ball.a0}
\| \aff_0 \|_{H^{s_1}} \leq 1,
\end{equation}
the Cauchy problem for system \eqref{1012.1} 
with initial data of the form 
\eqref{1012.2}-\eqref{1012.3}
has a solution 
\[
v \in C^0([0,T], H^{s_1-1}(\R^d)) \cap C^1([0,T], H^{s_1 - 3}(\R^d))
\] 
on the time interval $[0,T]$. 
Such a solution $v$ is the sum 
\[
v = y + \tilde v
\] 
of a ``free flow'' component $y(t,x)$, 
which is the solution of the Cauchy problem for the free Schr\oe{}dinger system 
\[
\begin{cases}
\pa_t y_j + i \lm_j \Delta y_j = 0, \quad \ 
j = 1, \ldots, N, 
\\
y(0,x) = \e^\s \aff_\e(x),
\end{cases}
\]
and a ``correction'' term $\tilde v(t,x)$ satisfying 
$\tilde v(0,x) = 0$ and 
\[
\sup_{t \in [0,T]} \| \tilde v(t) \|_{H^{s_1-1}_\e} 
+ \e^2 \sup_{t \in [0,T]} \| \pa_t \tilde v(t) \|_{H^{s_1-3}_\e} 
\leq C \e^{\s + d/2} \| \aff_0 \|_{H^{s_1}}.
\]

(ii) \emph{(Higher regularity)}
If, in addition, $\aff_0 \in H^s(\R^d)$ for $s > s_1$, 
then 
\[
\sup_{t \in [0,T]} \| \tilde v(t) \|_{H^{s-1}_\e} 
+ \e^2 \sup_{t \in [0,T]} \| \pa_t \tilde v(t) \|_{H^{s-3}_\e} 
\leq C_s \e^{\s + d/2} \| \aff_0 \|_{H^s}
\]
where $C_s$ depends on $s$ (and it is independent of $\e, \aff_0$).
\end{theorem}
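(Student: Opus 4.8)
The plan is to solve the Cauchy problem \eqref{1012.1} with datum \eqref{1012.2}--\eqref{1012.3} by a \emph{free flow decomposition} combined with the abstract Nash--Moser--H\oe{}rmander Theorem \ref{thm:NMH}, worked out in the scale of semiclassical Sobolev spaces $H^s_\e$ of \eqref{1012.4}. Write the unknown solution as $v = y + \tilde v$, where $y$ solves the free Schr\oe{}dinger system $\pa_t y_j + i\lm_j\Delta y_j = 0$ and $\tilde v$ is a correction with $\tilde v(0)=0$. I would set up a functional equation $\Phi(y,\tilde v)=0$ in which $\Phi$ collects the residual of the free equations for $y$, the residual of the correction equations $\pa_t\tilde v_j + i\lm_j\Delta\tilde v_j = \mN_j(y+\tilde v)$ (with $\mN_j$ the right-hand side of \eqref{1012.1}), and the residual $y(0)-\e^\s\aff_\e$ of the initial condition, working with $\tilde v$ in a space of functions vanishing at $t=0$. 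Although $\Phi=0$ already forces $y$ to be the exact free flow, I would deliberately carry $y$ as an unknown: in the Nash--Moser iteration the smoothing operators then act on $y$ as well, which is what prevents the loss of derivatives from accumulating (see below). In the semiclassical norms the change of variables in \eqref{1012.3} gives $\|\e^\s\aff_\e\|_{H^s_\e}\simeq \e^{\s+\sigmaa}\|\aff_0\|_{H^s}$ (with $\sigmaa=d/2$ in the concentrating case, $\sigmaa=0$ in the oscillating one), the free flows are isometries on every $H^s_\e$, and one spatial derivative costs a factor $\e^{-1}$; since $b_{\ell jk}(v), c_{\ell jk}(v)=O(|v|^p)$ vanish to order $p\ge2$, each lost $\e^{-1}$ in $\mN$ is multiplied by a genuinely small factor once $v$ is of the size of the free flow --- the mechanism behind the threshold \eqref{soglia.with.loss}. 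The precise $L^\infty_{t,x}$ bounds on $y$ (and on $\e\pa_x y$) are Lemma \ref{lemma:free flow}; together with the tame/Moser estimates for $\mN$ in the scale (the analogues of \eqref{stima mN'}--\eqref{stima mN''} in $H^s_\e$), they yield tame estimates for $\Phi$ and $\Phi''$, uniform in $\e\le\e_0$, on balls of the appropriate $\e$-dependent radius.

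The core step is the right invertibility of $\Phi'(y,\tilde v)$ with a fixed loss of one derivative. Linearizing, the equation for the increment $(\hat y,\hat{\tilde v})$ decouples into the \emph{triangular system} \eqref{lin.with}: the $\hat y$-component is a free Schr\oe{}dinger Cauchy problem, solved by the free flow with no loss; the $\hat{\tilde v}$-component is an inhomogeneous linear Schr\oe{}dinger system with first-order variable coefficients $\mN'(y+\tilde v)$ whose forcing term contains $\mN'(y+\tilde v)\,\hat y$, and it is here, and only here, that one derivative of $\hat y$ is consumed. I would solve this second Cauchy problem by $H^s_\e$ energy estimates: Assumption \ref{transpa} supplies a symmetrizer that makes the estimate lossless by controlling the genuinely skew part of the first-order coupling, while the smallness of $\mN'(v)$ (of order $|v|^p$ times a derivative) absorbs the self-coupling and the Kato--Ponce commutator terms, all with constants independent of $\e$. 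The outcome is an approximate right inverse of $\Phi'$ that loses exactly one derivative --- no more, since the $\hat y$-slot is inverted losslessly --- with tame constants uniform in $\e$.

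The ingredient that keeps this single loss from accumulating --- so that Theorem \ref{thm:NMH} closes in a \emph{fixed-loss} scale rather than degrading --- is precisely that $y$ is carried as an unknown: the smoothers of the scheme truncate $y$ and, at each step, feed into the correction equation only one new dyadic Fourier block of $\hat y$, in the spirit of H\oe{}rmander \cite{Geodesy}, so that the corresponding high-frequency error is summable. With this in hand I would check the hypotheses of Theorem \ref{thm:NMH}: the $\e$-uniform tame estimates above, and the smallness of the defect $\Phi(y^\sharp,0)$ at the exact free flow $y^\sharp$, which reduces to $\|\mN(y^\sharp)\|$ in the relevant norm; by Lemma \ref{lemma:free flow} and $p\ge2$ this is of size $\lesssim \e^{\s+d/2}\|\aff_0\|_{H^{s_1}}$, and \eqref{soglia.with.loss} is exactly the condition making it small enough for the iteration to converge in $H^{s_1-1}_\e$. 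Theorem \ref{thm:NMH} then yields a solution $(y,\tilde v)$ with $y=y^\sharp$ (forced by the free-equation and initial-condition residuals), $\tilde v(0)=0$, $v=y+\tilde v\in C^0([0,T],H^{s_1-1}(\R^d))\cap C^1([0,T],H^{s_1-3}(\R^d))$, and
\[
\sup_{t\in[0,T]}\|\tilde v(t)\|_{H^{s_1-1}_\e} + \e^2\sup_{t\in[0,T]}\|\pa_t\tilde v(t)\|_{H^{s_1-3}_\e}\ \lesssim\ \e^{\s+d/2}\|\aff_0\|_{H^{s_1}}.
\]
Part (ii), higher regularity, follows by rerunning the same scheme in the higher scale $H^s_\e$, $s>s_1$, with the solution just constructed as data, as for Theorem \ref{thm:no loss}(ii).

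I expect the main obstacle to be the energy estimate for the correction component in semiclassical norms with constants uniform in $\e$: one must check that the symmetrizer construction of \cite{MR,TZ} and the commutator/Moser estimates survive the $\e$-rescaling and, crucially, that the single derivative lost on $\hat y$ is compensated by the smallness produced by the free-flow bounds of Lemma \ref{lemma:free flow} (an $\e^{\s p-1}$ gain in the concentrating case, and an $L^\infty$ gain in the oscillating one), so that convergence of the Nash--Moser scheme reduces exactly to the sharp threshold \eqref{soglia.with.loss} and not to a worse one.
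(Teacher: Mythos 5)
Your strategy is the paper's: decompose $v=y+\tilde v$, keep the free-flow component as an unknown so that the smoothing operators of the scheme regularize it (the idea the paper attributes to H\oe{}rmander \cite{Geodesy}), solve the linearized problem as the triangular system \eqref{lin.with} in which exactly one derivative is lost on the free-flow slot, get $\e$-uniform energy estimates for the correction from Assumption \ref{transpa}, and close with Theorem \ref{thm:NMH}; your size heuristics also land on the correct threshold \eqref{soglia.with.loss} and on the correction size $\e^{\s+d/2}$.

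There is, however, one substantive deviation which, as written, is a gap: you carry the full space-time function $y(t,x)$ as an unknown, with the free equation and the initial condition imposed as residuals of $\Phi$. The paper instead takes as unknown the initial \emph{profile} $\aff$ (a function of $x$ only) and sets $u=\e^\s \mS \mT_{\e,c}\aff+\tilde u$ in \eqref{def Phi tilde}, so that the free-flow part of \emph{every} point of the domain of $\tilde\Phi$ --- hence of every Nash--Moser iterate, since the smoothings \eqref{def Sj.coppie} act on $\aff$ itself --- is an exact free flow of a concentrating/oscillating datum $\mT_{\e,c}\aff$. This is not cosmetic: the uniform-in-$\e$ bounds of Lemma \ref{lemma:free flow} ($\|y\|_{L^\infty}\lesssim\|\aff\|_{H^{s_0}}$, $\e^{|\a|}\|\pa_x^\a y\|_{L^\infty}\lesssim\|\aff\|_{H^{s_0+|\a|}}$), which beat the embedding \eqref{FS.emb} by the factor $\e^{-d/2}$ and feed the non-isotropic norms \eqref{def Xs}, \eqref{def mZs} and the tame estimates \eqref{est Psi.7}, \eqref{est.der.sec.mZ}, hold only for exact free flows of data of that special form, with constants expressed through the profile norm. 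In your formulation the $y$-iterates are general smoothed space-time functions, so the ``$L^\infty$ gain'' you invoke at the iterates is not justified unless you either build $W^{m,\infty}_\e$-type control of the $y$-component into the Banach scale or reparametrize by the profile as the paper does. Two minor points of wording: the lossless solve of the correction equation is not by a symmetrizer alone, but by the Texier--Zumbrun normal form $I+\e\,\op_\e(M)$ of \eqref{def M} removing the nonresonant part, with the resonant part $B_r$ (Hermitian by Assumption \ref{transpa}) handled by energy estimates --- since you cite \cite{MR,TZ} this is compatible; and the sharp radius in the paper comes from restricting to the smaller ball \eqref{ball.smaller} and optimizing the rescaling \eqref{def mZs} (Remarks \ref{rem:best rescaling}--\ref{rem:specific}) rather than from a Duhamel estimate of the defect at the exact free flow, although your heuristic yields the same exponent; higher regularity is read off the higher-regularity clause of Theorem \ref{thm:NMH} rather than by rerunning the scheme.
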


\begin{remark}[\emph{Smallness in low norm}]
 \label{rem:palla norma bassa}
In the higher regularity case, the smallness assumptions \eqref{NS.ball.v0}
in Theorem \ref{thm:no loss} and \eqref{NS.ball.a0} in Theorem \ref{thm:with loss}
are only required in the low norm $s_1$, with radii independent of the high regularity $s$.
\end{remark}

\begin{remark}[\emph{Comparison with previous results}]
\label{rem:comparison}
As observed in \cite{TZ} and \cite{ES}, 
M\'etivier and Rauch \cite{MR} already provide existence for a fixed positive $T$, uniformly in $\e$, 
when
\[
\s \geq \s_{\text{MR}} := 1 + d/2 - \sigmaa.
\]
Hence \cite{TZ}, \cite{ES} and Theorems \ref{thm:no loss}-\ref{thm:with loss} 
give something new only for $\s < \s_{\text{MR}}$.

The result of Texier and Zumbrun holds for $d \geq 2$, $p \geq 2$, 
and $\s$ above the threshold 
\[
\s_{\text{TZ}} := \frac{k_c - \sigmaa - 1}{p+1}
\]
(Theorem 4.6 in \cite{TZ}), 
where the constant $k_c$ satisfies some conditions;
in particular, $k_c \geq 6$ and 
\[
k_c \geq 3 + \frac{d}{2} \frac{p}{p-1},
\]
whence
\[
\s_{\text{TZ}} \geq \frac{1}{p+1} \Big( 2 + \frac{d}{2} \frac{p}{p-1} - \sigmaa \Big)
=: c.
\]
The threshold for $\s$ in our Theorem \ref{thm:with loss} is
\[
\s^*_1 := \frac{1 + d/2 - \sigmaa}{p} = \frac{\s_{\text{MR}}}{p}.
\]
For all pairs $(d,p)$ covered by \cite{TZ} 
(namely $d,p \geq 2$), one has $\s^*_1 < c \leq \s_{\text{TZ}}$,  
therefore we get a larger ball for the initial data.    
More precisely, regarding the data size, 
the improvement of Theorem \ref{thm:with loss} with respect to \cite{TZ} 
corresponds to the exponent $\s$ in the interval 
$\s^*_1 < \s \leq \min \{ \s_{\text{TZ}}, \s_{\text{MR}} \}$.
Note that for some pairs $(d,p)$ one has $\s_{\text{TZ}} \geq \s_{\text{MR}}$ 
(see Examples 4.8-4.9 in \cite{TZ}), 
so that \cite{TZ} gives no improvements with respect to \cite{MR};
our result improves \cite{MR} also in those cases. 

The result of Ekeland and S\'er\'e holds for $d,p \geq 2$,
and $\s$ above the threshold 
\[
\s_{\text{ES}} := \frac{d}{2} \frac{p}{p-1} - \sigmaa
\]
(Theorem 6 in \cite{ES}).  
The threshold for $\s$ in our Theorem \ref{thm:no loss} is
\[
\s^*_0 := \frac{1}{p} + \frac{d}{2} - \sigmaa.
\]
Since $\s^*_0 < \s_{\text{ES}}$ for all $d,p \geq 2$,
we get a larger ball for the initial data 
also with respect to \cite{ES}.

With respect to \cite{TZ} and \cite{ES} 
we also improve the regularity of the solution 
with respect to that of the initial data:
using Theorem \ref{thm:with loss}, 
the solution is one derivative less regular than the data
(the loss of regularity is one), 
while with Theorem \ref{thm:no loss} 
the solution has the same regularity as the data
(the loss is zero).
In \cite{TZ} and \cite{ES}, instead, 
the loss of regularity 
depends in a nontrivial way on several parameters of the iteration scheme, 
it blows up to $+ \infty$ in certain parameter regimes, 
and, in particular, can never be zero.
\end{remark}

\section{Functional setting} 
\label{sec:FS}

In this section we introduce weighted Sobolev norms 
and recall the basic inequalities 
that will be used in the rest of the paper. 

For $s \in \R$, we define 
\begin{equation} \label{FS.0}
\| u \|_{H^s(\R^d)} := \| \Lm^s u \|_{L^2(\R^d)}, 
\quad \ 
\| u \|_{H^s_\e(\R^d)} := \| \Lm^s_\e u \|_{L^2(\R^d)}, 
\end{equation}
where $\Lm^s = (1 - \Delta)^{s/2}$ is the Fourier multiplier 
of symbol $(1 + |\xi|^2)^{s/2}$ 
and $\Lm^s_\e = (1 - \e^2 \Delta)^{s/2}$ 
is that of symbol $(1 + \e^2 |\xi|^2)^{s/2}$,  
namely, following \cite{TZ}, 
\begin{equation} \label{FS.1} 
\| u \|_{H^s_\e(\R^d)} 
= \| (1-\e^2 \Delta)^{s/2} u \|_{L^2(\R^d)}
= \| (1 + |\e \xi|^2)^{s/2} \hat u(\xi) \|_{L^2(\R^d_\xi)}, 
\quad \ s \in \R, 
\end{equation}
where $\hat u$ is the Fourier transform of $u$ on $\R^d$, 
and $0 < \e \leq 1$. 
For all $u \in H^s(\R^d)$, one has 
\begin{equation} \label{FS.2}
\widehat{(R_\e u)}(\xi) 
= \e^{-d} \, \widehat u(\e^{-1} \xi), 
\quad \ 
(R_\e u)(x) := u(\e x),
\end{equation}
whence 
\begin{equation} \label{FS.3}
\Lm^s R_\e = R_\e \Lm^s_\e, 
\quad \ 
\| u \|_{H^s_\e(\R^d)} = \e^{d/2} \| R_\e u \|_{H^s(\R^d)}.
\end{equation}
We define the scalar product 
\begin{equation} \label{FS.4}
\langle u, v \rangle_{H^s_\e(\R^d)} 
:= \langle \Lm^s_\e u , \Lm^s_\e v \rangle_{L^2(\R^d)}.
\end{equation}
To shorten the notation, we write $\| \, \|_{H^s}$ instead of $\| \, \|_{H^s(\R^d)}$, 
and so on. 
Using \eqref{FS.3}, it is immediate to obtain the Sobolev embedding 
and the standard tame estimates for products and compositions of functions 
in terms of the rescaled norms \eqref{FS.1}: 
for the Sobolev embedding, one has 
\begin{equation} \label{FS.emb}
\| u \|_{L^\infty} 
= \| R_\e u \|_{L^\infty}
\leq C_{s_0} \| R_\e u \|_{H^{s_0}} 
= C_{s_0} \e^{-d/2} \| u \|_{H^{s_0}_\e}
\end{equation}
for all $s_0 > d/2$, all $u \in H^{s_0}(\R^d)$, 
for some constant $C_{s_0}$ depending on $s_0, d$;  
for the product, one has 
\begin{equation} \label{FS.prod}
\| u v \|_{H^s_\e} 
\leq C_s ( \| u \|_{L^\infty} \| v \|_{H^s_\e} + \| u \|_{H^s_\e} \| v \|_{L^\infty})
\end{equation}
for all $u, h \in H^s(\R^d)$, all $s \geq 0$, 
for some constant $C_s$ depending only on $s,d$;
for the composition, given any $C^\infty$ function $f$ 
such that $f(y) = O(y^p)$ around the origin 
for some integer $p \geq 1$, one has
\begin{equation} \label{FS.comp}
\| f(u) \|_{H^s_\e} 
\leq C_{s,M} \| u \|_{L^\infty}^{p-1} \| u \|_{H^s_\e}
\end{equation}
for all $M > 0$, 
all $u \in H^s(\R^d)$ in the ball $\| u \|_{L^\infty} \leq M$, 
all $s \geq 0$, 
for some constant $C_{s,M}$ depending only on $s,M,d,f$. 
Moreover, 
\begin{equation} \label{FS.der}
\e^{|\a|} \| \pa_x^\a u \|_{H^s_\e} \leq \| u \|_{H^{s+|\a|}_\e}
\end{equation}
for all multi-indices $\a \in \N^d$. 

For $m \geq 0$ integer, we define 
\begin{equation} \label{FS.5}
\| u \|_{W^{m,\infty}} 
:= \sum_{\begin{subarray}{c} \a \in \N^d \\ |\a| \leq m \end{subarray}} 
\| \pa_x^\a u \|_{L^\infty},
\quad \ 
\| u \|_{W^{m,\infty}_\e} 
:= \sum_{\begin{subarray}{c} \a \in \N^d \\ |\a| \leq m \end{subarray}} 
\e^{|\a|} \| \pa_x^\a u \|_{L^\infty}.
\end{equation}
One has
\begin{equation} \label{FS.6}
\pa_x^\a R_\e = \e^{|\a|} R_\e \pa_x^\a,
\quad\ 
\| u \|_{W^{m,\infty}_\e} = \| R_\e u \|_{W^{m,\infty}}. 
\end{equation}
Similarly as \eqref{FS.comp}, 
given any $C^\infty$ function $f$ such that $f(y) = O(y^p)$ 
around the origin for some positive integer $p$, one has
\begin{equation} \label{FS.7}
\| f(u) \|_{W^{m,\infty}_\e} 
\leq C_{m,M} \| u \|_{L^\infty}^{p-1} \| u \|_{W^{m,\infty}_\e}
\end{equation}
for all $M > 0$, 
all $u \in W^{m,\infty}(\R^d)$ in the ball $\| u \|_{L^\infty} \leq M$, 
all integers $m \geq 0$, 
for some constant $C_{m,M}$ depending on $m,M,d,f$. 
For the product of two functions, we also have 
\begin{equation} \label{FS.ss0}
\| uv \|_{H^s_\e} 
\leq \e^{-d/2} (C_{s_0} \| u \|_{H^{s_0}_\e} \| v \|_{H^s_\e} 
+ C_s \| u \|_{H^s_\e} \| v \|_{H^{s_0}_\e})
\end{equation} 
for all $s \geq 0$, $s_0 > d/2$, 
all $u,v \in H^s(\R^d) \cap H^{s_0}(\R^d)$,
and
\begin{equation} \label{FS.bona}
\| uv \|_{H^s_\e} 
\leq 2 \| u \|_{L^\infty} \| v \|_{H^s_\e} 
+ C_s \| u \|_{W^{m,\infty}_\e} \| v \|_{L^2}
\end{equation} 
for all $s \geq 0$, 
all $v \in H^s(\R^d)$, 
all $u \in W^{m,\infty}(\R^d)$, 
where $m$ is the smallest positive integer 
such that $m \geq s$, and $C_s$ depends on $s,d$. 
Estimate \eqref{FS.bona} is proved in the Appendix 
(see \eqref{bona} in Lemma \ref{lemma:prod s real}).
We remark that the constants $C_{s_0}, C_s, C_{s,M}, C_{m,M}$ 
in \eqref{FS.emb}, \eqref{FS.prod}, \eqref{FS.comp}, \eqref{FS.7}, 
\eqref{FS.ss0}, \eqref{FS.bona}
are independent of $\e$, 
and $C_{s_0}$ is also independent of $s$.

For time-dependent functions $u(t,x)$, $t \in [0,T]$,  
we denote, in short, 
\begin{alignat}{2} 
\label{def C Hs}
\| u \|_{C^0 H^s_\e} 
& := \| u \|_{C([0,T], H^s_\e)}, 
& \qquad 
\| u \|_{C^1_\e H^s_\e} 
& := \| u \|_{C^0 H^s_\e} + \e^2 \| \pa_t u \|_{C^0 H^{s-2}_\e},
\\
\| u \|_{C^0 W^m_\e} 
& := \| u \|_{C([0,T], W^{m, \infty}_\e)}, 
& \qquad  
\| u \|_{C^1_\e W^m_\e} 
& := \| u \|_{C^0 W^m_\e} + \e^2 \| \pa_t u \|_{C^0 W^{m-2}_\e}.
\label{def C Wm}
\end{alignat}
The notation $a \lesssim_s b$ 
means $a \leq C_s b$ for some constant $C_s$,  
independent of $\e$, possibly depending on $s$; 
also, $a \lesssim b$ means $a \leq C b$ for some constant $C$ 
independent of $\e$ and $s$.

\section{Analysis of the singular perturbation problem}
\label{sec:general}

In \cite{TZ} and \cite{ES}, system \eqref{1012.1} is written as 
\begin{equation} \label{0701.1}
\pa_t u + i A(\pa_x) u = B(u, \pa_x) u
\end{equation}
where $u = (v, \overline{v}) = (v_1, \ldots, v_N, \overline{v_1}, \ldots, \overline{v_N})$ 
is the unknown, 
$A(\pa_x)$ is the constant coefficients operator of second order
\[
A(\pa_x) = \mathrm{diag}(\lm_1, \ldots, \lm_n, - \lm_1, \ldots, - \lm_n) \Delta,
\]
$B(u, \pa_x)$ is the operator matrix 
\[
B = \begin{pmatrix} \mB & \mC \\ 
\overline{\mC} & \overline{\mB} \end{pmatrix},
\]
$\mB, \mC$ are the operator matrices with entries $b_{jk}(v,\pa_x)$, $c_{jk}(v,\pa_x)$ respectively, 
and $\overline{\mB}, \overline{\mC}$ have conjugate entries coefficients. 
To deal with concentrating or highly oscillating initial data \eqref{1012.3},  
in \cite{TZ} the weighted Sobolev norms \eqref{FS.1} are introduced. 
Recalling \eqref{FS.der}, 
it is natural, 
as it is done in \cite{TZ} and \cite{ES}, 
to write the powers of $\e$ as separate factors, 
writing \eqref{0701.1} as 
\begin{equation} \label{0701.2}
\pa_t u + i \e^{-2} A(\e \pa_x) u = \e^{-1} B(u, \e \pa_x) u
\end{equation}
where $A(\e \pa_x) := \e^2 A(\pa_x)$ and 
$B(u, \e \pa_x) := \e B(u,\pa_x)$. 
In this way $A(\e \pa_x)$ and $B(u, \e \pa_x)$ 
satisfy estimates that are uniform in $\e$:
\begin{equation} \label{0701.A}
\| A(\e \pa_x) u \|_{H^s_\e} 
\leq C_0 \| u \|_{H^{s+2}_\e} 
\end{equation}
for all $s \in \R$, all $u \in H^s(\R^d)$, 
with $C_0 = \max \{ |\lm_1|, \ldots, |\lm_N| \}$; 
\begin{equation} \label{0701.B}
\| B(u, \e \pa_x) h \|_{H^s_\e} 
\leq C_s (\| u \|_{L^\infty}^p \| h \|_{H^{s+1}_\e} 
+ \| u \|_{L^\infty}^{p-1} \| u \|_{H^s_\e} \| \e \pa_x h \|_{L^\infty})
\end{equation}
for all $s \geq 0$, 
all $h \in H^{s+1}(\R^d)$,
all $u \in H^s(\R^d)$ 
in the ball $\| u \|_{L^\infty} \leq 1$;
also, by \eqref{FS.bona} and \eqref{FS.7}, 
\begin{align} 
\| B(u, \e \pa_x) h \|_{H^s_\e} 
& \leq C \| u \|_{L^\infty}^p \| h \|_{H^{s+1}_\e} 
+ C_s \| u \|_{L^\infty}^{p-1} \| u \|_{W^{[s]+1,\infty}_\e} \| h \|_{H^1_\e},
\label{B generica}
\end{align}
for all $s \geq 0$, 
all $h \in H^{s+1}(\R^d)$,
all $u \in W^{[s]+1,\infty}(\R^d)$ 
in the ball $\| u \|_{L^\infty} \leq 1$,
where $[s]$ is the integer part of $s$;
and, by \eqref{prod pax} and \eqref{FS.7}, 
\begin{equation}\label{B -1}
\| B(u, \e \pa_x) h \|_{H^s_\e} 
\leq C \| u \|_{L^\infty}^{p-1} \| u \|_{W^{1,\infty}_\e} \| h \|_{H^{s+1}_\e} 
\end{equation}
for all $-1 \leq s \leq 0$, 
all $h \in H^{s+1}(\R^d)$,
all $u \in W^{1,\infty}(\R^d)$ 
in the ball $\| u \|_{L^\infty} \leq 1$.
The constants in \eqref{0701.A}, \eqref{0701.B}, \eqref{B generica}, \eqref{B -1} 
do not depend on $\e \in (0,1]$; 
$C_0, C$ in \eqref{0701.A}, \eqref{B generica} and \eqref{B -1}
are also independent of $s$.  

We consider the Cauchy problem for \eqref{0701.2} 
with initial data \eqref{1012.2}, 
namely 
\begin{equation} \label{Cp.general}
\begin{cases} 
\pa_t u + P(u) = 0, \\ 
u(0) = u_0
\end{cases}
\end{equation}
where
\begin{equation} \label{def P u0}
P(u) := i \e^{-2} A(\e \pa_x) u - \e^{-1} B(u, \e \pa_x) u,
\qquad 
u_0(x) := \e^\s (\aff_\e(x), \overline{\aff_\e(x)}).
\end{equation}
To apply our Nash-Moser theorem, 
we need to construct a right inverse for the linearized problem
and to estimate the second derivative of the nonlinear operator.
Let us begin with the linear inversion problem.
\medskip

\textbf{Analysis of the linearized problem.} 
Given $u(t,x)$, $f_1(t,x)$ and $f_2(x)$, 
consider the linear Cauchy problem for the unknown $h(t,x)$ 
\begin{equation} \label{Cp.lin}
\begin{cases} 
\pa_t h + P'(u)h = f_1, \\ 
h(0) = f_2,
\end{cases}
\end{equation}
where
\begin{align} \label{0901.2} 
P'(u) h & \,
= i \e^{-2} A(\e \pa_x) h - \e^{-1} B(u, \e \pa_x) h 
+ R_0(u) h,
\\
R_0(u) h & := - \e^{-1} (\pa_u B)(u, \e \pa_x)[h] u.
\label{def R0} 
\end{align}
Following \cite{TZ}, let 
\[
J := \{ (j,k) : \lm_j + \lm_k = 0 \}, 
\]
and let $\chi \in C^\infty_c(\R^d, \R)$ be a frequency truncation 
such that $0 \leq \chi(\xi) \leq 1$, 
$\chi(\xi) = 1$ for $|\xi| \leq 1/2$, 
and $\chi(\xi) = 0$ for $|\xi| \geq 1$. 
Like in \cite{TZ}, we decompose $B$ into the sum of a resonant term, 
a non-resonant term, and a low-frequency term: 
$B = B_r + B_{nr} +B_{lf}$, where
\begin{itemize}
\item 
the resonant term is 
\[
B_r := \begin{pmatrix} \mB_d & \mC_J \\ 
\overline{\mC_J} & \mB_d \end{pmatrix}
\]
where $\mB_d := \mathrm{diag}(b_{11}, \ldots, b_{NN})$,
$(\mC_J)_{jk} := c_{jk}$ if $(j,k) \in J$
and $(\mC_J)_{jk} := 0$ otherwise. 
By Assumption \ref{transpa}, the matrix $B_r(v,\xi)$ is Hermitian; 

\item
the nonresonant term is 
\[
B_{nr} := \begin{pmatrix} \mB^1 & \mC^1 \\ 
\overline{\mC}^1 & \overline{\mB}^1 \end{pmatrix}
\]
where $(\mB^1)_{jk} := (1 - \chi) b_{jk}$ if $j \neq k$, 
and $(\mB^1)_{jk} := 0$ if $j = k$; 
$(\mC^1)_{jk} := (1 - \chi) c_{jk}$ if $(j,k) \notin J$, 
and $(\mC^1)_{jk} := 0$ if $(j,k) \in J$; 

\item
the low-frequency term is 
\[
B_{lf} := \begin{pmatrix} \mB^0 & \mC^0 \\ 
\overline{\mC}^0 & \overline{\mB}^0 \end{pmatrix}
\]
where $(\mB^0)_{jk} := \chi b_{jk}$ if $j \neq k$, 
and $(\mB^0)_{jk} := 0$ if $j = k$; 
$(\mC^0)_{jk} := \chi c_{jk}$ if $(j,k) \notin J$, 
and $(\mC^0)_{jk} := 0$ if $(j,k) \in J$.  
\end{itemize}
We recall the normal form transformation of \cite{TZ}
(see the proof of Lemma 4.5 in \cite{TZ}): 
define the pseudo-differential matrix symbol $M(u(t,x),\xi)$ as 
\begin{equation} \label{def M}
M_{jk}(u(t,x), \xi) := \begin{cases} \dfrac{(B_{nr})_{jk}(u(t,x), i\xi)}{i |\xi|^2 (\om_j - \om_k)} 
& \text{if} \ \om_j \neq \om_k, 
\\
0 & \text{if} \ \om_j = \om_k,
\end{cases}
\end{equation}
where 
\[
\om_j := \begin{cases}
- \lm_j \quad & \text{for} \ j = 1, \ldots, N, 
\\
\lm_{j-N} \quad & \text{for} \ j = N+1, \ldots, 2N.
\end{cases}
\]
Since the commutator of $A$ and $M$ is the matrix
\begin{equation} \label{homo.eq}   
[ A(i\xi), M(u, \xi) ] 
= \big( |\xi|^2 (\om_j - \om_k) M_{jk}(u, \xi) \big)_{j,k = 1, \ldots, 2N},
\end{equation}
one has 
\[
B_{nr}(u(t,x), i \xi) - i [ A(i\xi), M(u(t,x), \xi) ] = 0.
\]
Like in \cite{TZ}, we introduce the following semiclassical quantization 
of a symbol $\s(x,\xi)$
\[
\op_\e(\s)h (x) := (2\pi)^{-d/2} 
\int_{\R^d} \s(x, \e \xi) \hat h(\xi) e^{i \xi \cdot x} \, d\xi.
\]
By \eqref{def M} and \eqref{B generica}, one has
\begin{align}  
\| \op_\e(M) h \|_{H^s_\e} 
& \leq C \| u \|_{L^\infty}^p \| h \|_{H^{s-1}_\e} 
+ C_s \| u \|_{L^\infty}^{p-1} \| u \|_{W^{[s]+1,\infty}_\e} \| h \|_{H^{-1}_\e},
\label{pasubio.11}
\\
\| \op_\e(M) h \|_{L^2} 
& \leq C \| u \|_{L^\infty}^p \| h \|_{H^{-1}_\e}
\label{pasubio.-1-1}
\end{align}
for all $s \geq 0$, all $\| u \|_{L^\infty} \leq 1$, all $h$. 
Hence there exists $\rho_0 > 0$, independent of $\e$, 
such that, for $u$ in the ball 
\begin{equation} \label{ball for M}
\e \| u \|_{L^\infty}^p \leq \rho_0,
\end{equation}
one has 
\begin{equation} \label{pasubio.1/2}
\| \e \op_\e(M) h \|_{H^{-1}_\e} 
\leq \| \e \op_\e(M) h \|_{L^2} 
\leq C \e \| u \|_{L^\infty}^p \| h \|_{H^{-1}_\e} 
\leq \tfrac12 \| h \|_{H^{-1}_\e}
\leq \tfrac12 \| h \|_{L^2}.
\end{equation}
Therefore, by Neumann series, $I + \e \op_\e(M)$ is invertible
in $H^{-1}_\e$ and in $L^2$, and 
\begin{equation} \label{pasubio.12}
\| (I + \e \op_\e(M))^{-1} h \|_{H^s_\e} 
\leq C \| h \|_{H^s_\e} 
+ C_s \e \| u \|_{L^\infty}^{p-1} \| u \|_{W^{[s]+1,\infty}_\e} \| h \|_{H^{-1}_\e}
\end{equation}
for all $s \geq 0$, for $u \in W^{[s]+1,\infty}_\e(\R^d)$ 
in the ball \eqref{ball for M} (where $\rho_0$ is independent of $s$). 

Under the change of variable 
\begin{equation} \label{change h ph}
h = (I + \e \op_\e(M)) \ph,
\end{equation}
the linear Cauchy problem \eqref{Cp.lin} becomes 
\begin{equation} \label{Cp.Q}
\begin{cases} 
\pa_t \ph + Q(u) \ph = g_1 \\ 
\ph(0) = g_2
\end{cases}
\end{equation}
where 
\begin{equation} \label{def g1 g2}
g_1 := (I + \e \op_\e(M))^{-1} f_1, 
\quad 
g_2 := (I + \e \op_\e(M))^{-1}\big|_{t=0} f_2, 
\end{equation}
and, by \eqref{homo.eq},   
\begin{align}
\pa_t + Q(u) & :=  (I + \e \op_\e(M))^{-1} (\pa_t + P'(u)) (I + \e \op_\e(M)) 
\notag \\
& \, = \pa_t + i \e^{-2} A(\e \pa_x) - \e^{-1} B_r(u, \e \pa_x) + G(u),
\label{def Q}
\end{align}
with
\begin{align}
G(u) & := 
(I + \e \op_\e(M))^{-1} \Big( \e \op_\e(M) \e^{-1} B_r(u, \e \pa_x)
- \e^{-1} B_{lf}(u, \e \pa_x) 
\notag \\ & \quad \ 
+ \e \op_\e(\pa_t M) 
- \e^{-1} B(u, \e \pa_x) \e \op_\e(M) 
+ R_0(u) (I + \e \op_\e(M)) \Big)
\label{def G}
\end{align}
(we have used the trivial identity 
$I - (I + K)^{-1} = (I + K)^{-1} K$
for $K = \e \op_\e(M)$). 

Now we prove an energy estimate for \eqref{def Q}, 
and we start with the term $G(u)$. 
By \eqref{pasubio.12}, 
\eqref{pasubio.11}, 
\eqref{pasubio.1/2}, 
\eqref{B generica},
\eqref{B -1},
the first term in \eqref{def G} satisfies, for $s \geq 0$,
\begin{multline*}
\| (I + \e \op_\e(M))^{-1} \e \op_\e(M) \e^{-1} B_r(u, \e \pa_x) \ph \|_{H^s_\e} 
\\ 
\lesssim_s \| u \|_{L^\infty}^{2p} \| \ph \|_{H^s_\e} 
+ \| u \|_{L^\infty}^{2p-2} \| u \|_{W^{[s]+1,\infty}_\e} \| u \|_{W^{1,\infty}_\e} \| \ph \|_{L^2}.
\end{multline*} 
and  
\[
\| (I + \e \op_\e(M))^{-1} \e \op_\e(M) \e^{-1} B_r(u, \e \pa_x) \ph \|_{L^2} 
\lesssim \| u \|_{L^\infty}^{2p-1} \| u \|_{W^{1,\infty}_\e} \| \ph \|_{L^2}.
\]
The low-frequency term $B_{lf}$ satisfies, for $s \geq 0$,  
\[
\| \e^{-1} B_{lf}(u, \e \pa_x) \ph \|_{H^s_\e} 
\lesssim_s \e^{-1} \| u \|_{L^\infty}^{p-1} \| u \|_{W^{[s]+1,\infty}_\e} \| \ph \|_{L^2}.
\]
The term containing the time derivative of the symbol $M$ is estimated,
for $s \geq 0$, by
\begin{align*}
\| \e \op_\e(\pa_t M) \ph \|_{H^s_\e} 
& \lesssim_s 
\e \| u \|_{L^\infty}^{p-1} \| \pa_t u \|_{L^\infty} \| \ph \|_{H^{s-1}_\e} 
\\ & \quad \ 
+ \e ( \| u \|_{L^\infty}^{p-1} \| \pa_t u \|_{W^{[s]+1,\infty}_\e}
+ \| u \|_{L^\infty}^{\nu} \| u \|_{W^{[s]+1,\infty}_\e} \| \pa_t u \|_{L^\infty} ) 
\| \ph \|_{H^{-1}_\e}
\end{align*}
where
\begin{equation} \label{def nu}
\nu := \max \{ p-2 , 0 \},
\end{equation}
and, by \eqref{def M}, 
\[
\| \e \op_\e(\pa_t M) \ph \|_{H^{-1}_\e} 
\leq \| \e \op_\e(\pa_t M) \ph \|_{L^2} 
\lesssim \e \| u \|_{L^\infty}^{p-1} \| \pa_t u \|_{L^\infty} \| \ph \|_{H^{-1}_\e}.
\] 
Next, $R_0$ defined in \eqref{def R0} satisfies, for $s \geq 0$, 
\begin{align}
\| R_0(u) \ph \|_{H^s_\e} 
& \lesssim_s 
\e^{-1} \| u \|_{L^\infty}^{p-1} ( \| u \|_{W^{1,\infty}_\e} \| \ph \|_{H^s_\e}
+ \| u \|_{W^{[s]+2,\infty}_\e} \| \ph \|_{L^2}),
\label{est R0 Hs}
\\
\| R_0(u) \ph \|_{L^2} 
& \lesssim 
\e^{-1} \| u \|_{L^\infty}^{p-1} \| u \|_{W^{1,\infty}_\e} \| \ph \|_{L^2}.
\label{est R0 L2} 
\end{align}
Hence $G(u)$ in \eqref{def G} satisfies, for all $s \geq 0$, 
\begin{align}
\| G(u) \ph \|_{H^s_\e} 
& \lesssim_s 
\e^{-1} \| u \|_{L^\infty}^{p-1} 
( \| u \|_{W^{1,\infty}_\e} + \e^2 \| \pa_t u \|_{L^\infty} ) \| \ph \|_{H^s_\e}
\notag \\ & \quad \ \  
+ \e^{-1} \{ \| u \|_{L^\infty}^{p-1} 
(\| u \|_{W^{[s]+2,\infty}_\e} + \e^2 \| \pa_t u \|_{W^{[s]+1,\infty}_\e})  
\notag \\ & \qquad \qquad \quad
+ \e^2 \| u \|_{L^\infty}^\nu \| u \|_{W^{[s]+1,\infty}_\e} 
\| \pa_t u \|_{L^\infty} \} \| \ph \|_{L^2},
\label{est G}
\\
\label{est G L2}
\| G(u) \ph \|_{L^2} 
& \lesssim \e^{-1} \| u \|_{L^\infty}^{p-1} 
( \| u \|_{W^{1,\infty}_\e} + \e^2 \| \pa_t u \|_{L^\infty} ) \| \ph \|_{L^2}.
\end{align}

The constant coefficient operator $A(\e \pa_x)$ in \eqref{def Q} satisfies 
\begin{equation} \label{zero secco}
\Re \langle i \e^{-2} A(\e \pa_x) \ph , \ph \rangle_{H^s_\e} = 0
\end{equation}
because $\lm_j$ are all real.  
To estimate the term with $B_r(u, \e \pa_x)$ in \eqref{def Q}, 
we recall that 
\[
2 \Re \langle X \ph, \ph \rangle_{H^s_\e} 
= \langle (X + X^*) \Lm^s_\e \ph , \Lm^s_\e \ph \rangle_{L^2}
+ 2 \Re \langle [\Lm^s_\e, X] \ph, \Lm^s_\e \ph \rangle_{L^2}
\]
for any linear operator $X$, 
where $X^*$ is the adjoint of $X$ with respect to the $L^2$ scalar product
and $[ \, , \, ]$ is the commutator, 
whence 
\[
2 | \Re \langle X \ph, \ph \rangle_{H^s_\e} |
\leq \| X + X^* \|_{\mL(L^2, L^2)} \| \ph \|_{H^s_\e}^2 
+ 2 \| \, [\Lm^s_\e, X] \ph \|_{L^2} \| \ph \|_{H^s_\e}.
\]
By the Hermitian structure of $B_r(u, \e\pa_x)$, 
\begin{equation} \label{X+X*}
\| X + X^* \|_{\mL(L^2, L^2)} 
\lesssim \e^{-1} \| u \|_{L^\infty}^{p-1} \| u \|_{W^{1,\infty}_\e}, 
\quad \ 
X = \e^{-1} B_r(u, \e \pa_x),
\end{equation}
and, by \eqref{0503.6}, for $X = \e^{-1} B_r(u, \e \pa_x)$ one has
\[
\| \, [\Lm^s_\e, X] \ph \|_{L^2}
\lesssim_s \e^{-1} \| u \|_{L^\infty}^{p-1} ( \| u \|_{W^{1,\infty}_\e} \| \ph \|_{H^s_\e} 
+ \| u \|_{W^{[s]+2,\infty}_\e} \| \ph \|_{L^2}).
\]
Therefore 
\begin{align} 
|\Re \langle \e^{-1} B_r(u, \e \pa_x) \ph, \ph \rangle_{H^s_\e}| 
& \lesssim_s \e^{-1} \| u \|_{L^\infty}^{p-1} ( \| u \|_{W^{1,\infty}_\e} \| \ph \|_{H^s_\e} 
+ \| u \|_{W^{[s]+2,\infty}_\e} \| \ph \|_{L^2}) \| \ph \|_{H^s_\e},
\notag \\
|\Re \langle \e^{-1} B_r(u, \e \pa_x) \ph, \ph \rangle_{L^2}| 
& \lesssim \e^{-1} \| u \|_{L^\infty}^{p-1} \| u \|_{W^{1,\infty}_\e} \| \ph \|_{L^2}^2.
\label{est Br L2}
\end{align}
By \eqref{est G}-\eqref{est Br L2}, 
the solution $\ph$ of the linear equation $\pa_t \ph + Q(u)\ph = g_1$ 
(see \eqref{Cp.Q} and \eqref{def Q}) satisfies 
\begin{align}
\pa_t (\| \ph \|_{H^s_\e}^2)
& = 2 \Re \langle g_1 + \e^{-1} B_r(u, \e \pa_x) \ph - G(u) \ph , \ph \rangle_{H^s_\e} 
\notag \\ 
& \lesssim_s  \big\{ \| g_1 \|_{H^s_\e} 
+ \e^{-1} \| u \|_{L^\infty}^{p-1} 
( \| u \|_{W^{1,\infty}_\e} + \e^2 \| \pa_t u \|_{L^\infty} ) \| \ph \|_{H^s_\e} 
\notag \\ 
& \quad \ \ 
+ \e^{-1} \big( \| u \|_{L^\infty}^{p-1} 
( \| u \|_{W^{[s]+2,\infty}_\e} + \e^2 \| \pa_t u \|_{W^{[s]+1,\infty}_\e} )
\notag \\
& \quad \ \ 
+ \e^2 \| u \|_{L^\infty}^\nu \| u \|_{W^{[s]+1,\infty}_\e} \| \pa_t u \|_{L^\infty} \big)
\| \ph \|_{L^2} \big\} \| \ph \|_{H^s_\e},
\label{est ph Hs squarato}
\\
\pa_t (\| \ph \|_{L^2}^2)
& \lesssim 
\e^{-1} \| u \|_{L^\infty}^{p-1} 
(\| u \|_{W^{1,\infty}_\e} + \e^2 \| \pa_t u \|_{L^\infty}) \| \ph \|_{L^2}^2
+ \| g_1 \|_{L^2} \| \ph \|_{L^2}. 
\label{est ph L2 squarato}
\end{align}
If $u$ satisfies 
\begin{equation} \label{ansatz.marzo.1} 
\e^{-1} \| u \|_{L^\infty}^{p-1} 
(\| u \|_{W^{1,\infty}_\e} + \e^2 \| \pa_t u \|_{L^\infty}) \leq 1
\end{equation}
on the time interval $[0,T]$, 
then for $s \geq 0$ the solution $\ph$ of \eqref{Cp.Q} satisfies,
with the notation introduced in \eqref{def C Hs}, \eqref{def C Wm},
\begin{align} 
\| \ph \|_{C^0 L^2} 
& \lesssim \| g_1 \|_{C^0 L^2} + \| g_2 \|_{L^2},
\label{est ph L2}
\\
\| \ph \|_{C^0 H^s_\e} 
& \lesssim_s 
\| g_1 \|_{C^0 H^s_\e} + \| g_2 \|_{H^s_\e}
+ \e^{-1} \big( \| u \|_{C^0 L^\infty}^{p-1} \| u \|_{C^1_\e W^{[s]+3}_\e}
\notag \\ & \quad \ 
+ \| u \|_{C^0 L^\infty}^\nu \| u \|_{C^0 W^{[s]+1}_\e} \| u \|_{C^1_\e W^2_\e} \big) 
(\| g_1 \|_{C^0 L^2} + \| g_2 \|_{L^2})
\label{est ph Hs}
\end{align}
(first use \eqref{est ph L2 squarato}, \eqref{ansatz.marzo.1} and Gronwall to get \eqref{est ph L2}, 
then insert \eqref{est ph L2} into \eqref{est ph Hs squarato} and use Gronwall again).

By definitions \eqref{change h ph}, \eqref{def g1 g2}
and estimates \eqref{pasubio.11}, \eqref{pasubio.-1-1}, \eqref{pasubio.12},  
we deduce that the solution $h$ of the linear Cauchy problem \eqref{Cp.lin} 
satisfies the same estimates \eqref{est ph L2}, \eqref{est ph Hs} as $\ph$ 
with $f_1, f_2$ in place of $g_1, g_2$,
namely, for all $s \geq 0$, 
\begin{align} \label{est h L2}
\| h \|_{C^0 L^2} 
& \lesssim \| f_1 \|_{C^0 L^2} + \| f_2 \|_{L^2},
\\
\| h \|_{C^0 H^s_\e} 
& \lesssim_s 
\| f_1 \|_{C^0 H^s_\e} + \| f_2 \|_{H^s_\e}
+ \e^{-1} \big( \| u \|_{C^0 L^\infty}^{p-1} \| u \|_{C^1_\e W^{[s]+3}_\e}
\notag \\ & \quad \ 
+ \| u \|_{C^0 L^\infty}^\nu \| u \|_{C^0 W^{[s]+1}_\e} \| u \|_{C^1_\e W^2_\e} \big) 
(\| f_1 \|_{C^0 L^2} + \| f_2 \|_{L^2}).
\label{est h Hs}
\end{align}
From the equation $\pa_t h + P'(u)h = f_1$ one has, 
for all $s$ real, 
\begin{equation} \label{1603.1}
\| \pa_t h \|_{H^s_\e} \leq \| f_1 \|_{H^s_\e} + \| P'(u)h \|_{H^s_\e}.
\end{equation}
By \eqref{B -1}, \eqref{est R0 L2}, \eqref{ansatz.marzo.1}, 
for $-1 \leq s \leq 0$ one has
\begin{equation} \label{1603.2}
\| P'(u)h \|_{H^s_\e} 
\lesssim \e^{-2} \| h \|_{H^{s+2}_\e},
\end{equation}
and, by \eqref{B generica}, \eqref{est R0 Hs}, \eqref{ansatz.marzo.1}, 
for $s \geq 0$ one has 
\begin{equation} \label{1603.3}
\| P'(u)h \|_{H^s_\e} 
\lesssim_s \e^{-2} \| h \|_{H^{s+2}_\e} 
+ \e^{-1} \| u \|_{L^\infty}^{p-1} \| u \|_{W^{[s]+2,\infty}_\e} \| h \|_{L^2}.
\end{equation}
Hence, by \eqref{est h L2}-\eqref{1603.3},
for all $s \geq -1$ one has 
\begin{align*}
\e^2 \| \pa_t h \|_{H^s_\e} 
& \lesssim_s \| f_1 \|_{C^0 H^{s+2}_\e} + \| f_2 \|_{H^{s+2}_\e}
+ \e^{-1} \big( \| u \|_{C^0 L^\infty}^{p-1} \| u \|_{C^1_\e W^{[s]+5}_\e}
\notag \\ & \quad \ 
+ \| u \|_{C^0 L^\infty}^\nu \| u \|_{C^0 W^{[s]+3}_\e} \| u \|_{C^1_\e W^2_\e} \big) 
(\| f_1 \|_{C^0 L^2} + \| f_2 \|_{L^2}).
\end{align*}
Thus, recalling definition \eqref{def C Hs}, $h$ satisfies, for all $s \geq 1$,
\begin{align}
\| h \|_{C^1_\e H^s_\e}
& \lesssim_s \| f_1 \|_{C^0 H^s_\e} + \| f_2 \|_{H^s_\e}
+ \e^{-1} \big( \| u \|_{C^0 L^\infty}^{p-1} \| u \|_{C^1_\e W^{[s]+3}_\e}
\notag \\ & \quad \ 
+ \| u \|_{C^0 L^\infty}^\nu \| u \|_{C^0 W^{[s]+1}_\e} \| u \|_{C^1_\e W^2_\e} \big) 
(\| f_1 \|_{C^0 L^2} + \| f_2 \|_{L^2}).
\label{stima inv generica}
\end{align}
In conclusion, we have proved the following result.

\begin{lemma}[Right inverse of the linearized problem] 
\label{lemma:inv generale}
Let $s \geq 1$ be real, 
and let $u$ belong to 
$C([0,T], W^{[s]+3,\infty}(\R^d)) \cap C^1([0,T], W^{[s]+1,\infty}(\R^d))$, 
with \eqref{ansatz.marzo.1} and \eqref{ball for M}. 
Then for all $f_1 \in C([0,T], H^s(\R^d))$, 
all $f_2 \in H^s(\R^d)$, the linear Cauchy problem \eqref{Cp.lin}
has a (unique) solution $h$, which satisfies \eqref{stima inv generica}.
\end{lemma}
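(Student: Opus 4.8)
The plan is to assemble the statement from the chain of estimates already derived in this section, essentially by collecting what has been proved and arguing existence separately from the estimate. The key observation is that the a priori bound \eqref{stima inv generica} has already been established for any sufficiently smooth solution $h$ of \eqref{Cp.lin}; what remains is (a) to record that the hypotheses of the lemma are exactly what is needed to run that chain, and (b) to produce an actual solution in the stated regularity class, since the computations above were formal (they manipulate $\pa_t(\|\ph\|_{H^s_\e}^2)$, apply Gronwall, invert $I+\e\op_\e(M)$, etc., as if everything were justified).

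First I would note that under the hypothesis $u \in C([0,T],W^{[s]+3,\infty}) \cap C^1([0,T],W^{[s]+1,\infty})$ together with \eqref{ball for M}, all the objects used above are well defined: the symbol $M$ in \eqref{def M} and its time derivative make sense and obey \eqref{pasubio.11}, \eqref{pasubio.-1-1}, the operator $I+\e\op_\e(M)$ is invertible in $H^\sigma_\e$ for $0 \le \sigma \le s$ with \eqref{pasubio.12}, and $G(u)$, $R_0(u)$, $B_r(u,\e\pa_x)$ satisfy \eqref{est G}--\eqref{est Br L2}. The hypothesis \eqref{ansatz.marzo.1} is precisely the smallness needed to absorb the coefficient of $\|\ph\|_{H^s_\e}$ on the right-hand side of the energy inequality and to close the Gronwall argument leading to \eqref{est ph L2}--\eqref{est ph Hs}; note \eqref{ansatz.marzo.1} also implies the milder bound $\e\|u\|_{L^\infty}^p \le \rho_0$ in \eqref{ball for M} is compatible, and in any case both are assumed.

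Next I would establish existence and uniqueness for \eqref{Cp.lin}. The clean route is: pass to the conjugated unknown $\ph$ via \eqref{change h ph}, so that $\ph$ solves \eqref{Cp.Q} with $g_1,g_2$ as in \eqref{def g1 g2}; the operator $\pa_t + Q(u)$ in \eqref{def Q} is $\pa_t + i\e^{-2}A(\e\pa_x) - \e^{-1}B_r(u,\e\pa_x) + G(u)$, where $i\e^{-2}A(\e\pa_x)$ generates a unitary group on every $H^\sigma_\e$ (by \eqref{zero secco}), $\e^{-1}B_r$ is a first-order operator with Hermitian principal part, and $G(u)$ is bounded on $H^\sigma_\e$ for $0\le\sigma\le s$ by \eqref{est G}. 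Hence $Q(u)(t)$ is, for each $t$, a generator of a strongly continuous evolution family on the scale $\{H^\sigma_\e\}$ (a standard symmetric-hyperbolic / Kato-type well-posedness statement, using the energy estimates \eqref{est ph Hs squarato}--\eqref{est ph L2 squarato} as the quantitative input), so \eqref{Cp.Q} has a unique solution $\ph \in C([0,T],H^s_\e) \cap C^1([0,T],H^{s-2}_\e)$; inverting the change of variables \eqref{change h ph} gives the unique $h$ in the same class, and $\pa_t h \in C([0,T],H^{s-2}_\e)$ follows from the equation and \eqref{1603.3}. Uniqueness also follows directly from \eqref{est h L2} applied to the difference of two solutions.

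Finally I would record the estimate: feeding $g_1,g_2$ of \eqref{def g1 g2} through \eqref{est ph L2}--\eqref{est ph Hs}, then using \eqref{pasubio.11}, \eqref{pasubio.-1-1}, \eqref{pasubio.12} to convert back from $g_i$ to $f_i$ (these introduce only factors of the form $\|u\|_{L^\infty}^{p-1}\|u\|_{W^{[s]+1,\infty}_\e}$, harmlessly absorbed into the coefficient already present), yields \eqref{est h L2}--\eqref{est h Hs}; combining with \eqref{1603.1}--\eqref{1603.3} to control $\e^2\|\pa_t h\|_{H^{s-2}_\e}$ gives \eqref{stima inv generica}. The main obstacle is the existence step: the energy-method computations in the text are formal, so one must invoke (or sketch) the standard linear evolution-equation theory to upgrade the a priori estimate into an actual solution in the claimed regularity class — everything else is bookkeeping of the inequalities already assembled in this section.
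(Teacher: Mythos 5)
Your proposal follows essentially the same route as the paper: the paper's proof of Lemma \ref{lemma:inv generale} is precisely the chain of estimates preceding it (conjugation by $I+\e\op_\e(M)$, energy estimates and Gronwall for $\ph$, conversion back to $h$, and the bound on $\e^2\pa_t h$ from the equation), stated with the words ``In conclusion, we have proved the following result.'' Your additional remark that the existence of the solution must be supplied by standard linear evolution (Kato-type) theory is correct and is simply left implicit in the paper, so the proposal is sound.
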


\textbf{Estimate for the second derivative.} 
By \eqref{diff.str} and \eqref{def P u0}, the operator 
\begin{align*} 
P''(u)[h_1, h_2]
& = - \e^{-1} (\pa_u B)(u, \e \pa_x)[h_1] h_2 
- \e^{-1} (\pa_u B)(u, \e \pa_x)[h_2] h_1
\\ & \quad \ 
- \e^{-1} (\pa_{uu} B)(u, \e \pa_x)[h_1, h_2] u 
\end{align*}
is the sum of terms of the form 
\begin{equation}\label{g'' informal}
\e^{-1} g'(u) h_1 \, \e \pa_x h_2
+ \e^{-1} g'(u) h_2 \, \e \pa_x h_1
+ \e^{-1} g''(u) h_1 h_2 \, \e \pa_x u,
\end{equation}
where $g(u)$ is a vector of components 
$b_{\ell j k}(u)$ or $c_{\ell jk}(u)$. 
By \eqref{order p}, $g(u) = O(|u|^p)$ 
with $p \geq 1$ integer.
For $p \geq 3$, 
by \eqref{FS.comp} one has
for all $u$ in the ball $\| u \|_{L^\infty} \leq 1$, 
for all $s \geq 0$,
\begin{alignat}{2} 
\| g'(u) \|_{L^\infty} 
& \lesssim \| u \|_{L^\infty}^{p-1},
& \qquad 
\| g'(u) \|_{H^s_\e} 
& \lesssim_s \| u \|_{H^s_\e} \| u \|_{L^\infty}^{p-2},
\label{1301.2} 
\\
\| g''(u) \|_{L^\infty} 
& \lesssim \| u \|_{L^\infty}^{p-2},
& \qquad 
\| g''(u) \|_{H^s_\e} 
& \lesssim_s \| u \|_{H^s_\e} \| u \|_{L^\infty}^{p-3}.
\label{1301.3} 
\end{alignat}
For $p=2$, $g(u) = g_2(u) + \tilde g(u)$ 
where $g_2(u)$ is homogeneous of degree 2 in $u$ 
and $\tilde g(u) = O(|u|^3)$ 
(we do not distinguish whether $\tilde g$ is of order 3 or higher).
Thus $\tilde g(u)$ satisfies \eqref{1301.2}-\eqref{1301.3} with 3 in place of $p$, 
and $g_2$ satisfies \eqref{1301.2} with 2 in place of $p$, 
while $g_2''(u)$ is a constant, independent of $u$.   
For $p=1$, one has $g(u) = g_1(u) + g_2(u) + \tilde g(u)$ 
where $g_1(u)$ is linear in $u$ and $g_2, \tilde g$ are as above. 
Thus $g_1'(u)$ is a constant, independent of $u$, and $g_1''(u)=0$. 

By \eqref{g'' informal}, \eqref{FS.prod} and \eqref{FS.comp}, 
for all $u$ in the ball $\| u \|_{L^\infty} \leq 1$, 
for all real $s \geq 0$, all integer $p \geq 1$, one has 
\begin{align}
\| P''(u)[h_1,h_2] \|_{H^s_\e} 
& \lesssim_s \e^{-1} \| u \|_{L^\infty}^{p-1}  
(\| h_1 \|_{H^{s+1}_\e} \| h_2 \|_{L^\infty}
+ \| h_1 \|_{H^s_\e} \| h_2 \|_{W^{1,\infty}_\e} 
\notag \\ & \hspace{60pt}
+ \| h_1 \|_{W^{1,\infty}_\e} \| h_2 \|_{H^s_\e} 
+ \| h_1 \|_{L^\infty} \| h_2 \|_{H^{s+1}_\e})  
\notag \\ & \quad \
+ \e^{-1} \| u \|_{L^\infty}^\nu \| u \|_{W^{1, \infty}_\e} 
( \| h_1 \|_{H^s_\e} \| h_2 \|_{L^\infty} 
+ \| h_1 \|_{L^\infty} \| h_2 \|_{H^s_\e} )
\notag \\ & \quad \
+ \e^{-1} \| u \|_{L^\infty}^\nu \| u \|_{H^s_\e}
( \| h_1 \|_{W^{1, \infty}_\e} \| h_2 \|_{L^\infty} 
+ \| h_1 \|_{L^\infty} \| h_2 \|_{W^{1, \infty}_\e}) 
\notag \\ & \quad \
+ \e^{-1} (\| u \|_{L^\infty}^\nu \| u \|_{H^{s+1}_\e} 
+ \| u \|_{L^\infty}^{\nu_3} \| u \|_{W^{1, \infty}_\e} \| u \|_{H^s_\e}) 
\| h_1 \|_{L^\infty} \| h_2 \|_{L^\infty},
\label{1103.1}
\end{align}
where $\nu = \max \{ p-2, 0 \}$ has been defined in \eqref{def nu},
and $\nu_3 := \max\{ p-3, 0 \}$. 

\bigskip

\textbf{Estimates in $H^s_\e$ spaces only.} 
For the result in the concentrating case, 
it is convenient to work directly in $H^s_\e$ class, 
avoiding the $W^{m,\infty}_\e$ spaces. 
Thus, by \eqref{0701.B}, one has 
\begin{equation} \label{NS.B}
\| B(u,\e \pa_x) h \|_{H^s_\e} 
\leq \e^{-pd/2} (C_{s_0} \| u \|_{H^{s_0}_\e}^p \| h \|_{H^{s+1}_\e} 
+ C_s \| u \|_{H^{s_0}_\e}^{p-1} \| u \|_{H^s_\e} \| h \|_{H^{s_0+1}_\e})
\end{equation}
for all $s \geq s_0 > d/2$, all $u$ in the ball 
\begin{equation} \label{NS.ball.1}
C_{s_0} \e^{-d/2} \| u \|_{H^{s_0}_\e} \leq 1,
\end{equation}
so that $\| u \|_{L^\infty} \leq 1$. 
By \eqref{NS.B} and \eqref{def M}, 
\begin{equation} \label{NS.M.1}
\| \op_\e(M) h \|_{H^s_\e} 
\leq \e^{-pd/2} (C_{s_0} \| u \|_{H^{s_0}_\e}^p \| h \|_{H^{s-1}_\e} 
+ C_s \| u \|_{H^{s_0}_\e}^{p-1} \| u \|_{H^s_\e} \| h \|_{H^{s_0-1}_\e})
\end{equation}
for $s \geq s_0 > d/2$, $u$ in the ball \eqref{NS.ball.1}. 
Thus there exists $\rho_3 > 0$, independent of $\e$, such that 
for $u$ in the ball
\begin{equation} \label{NS.ball.2}
\e^{1-pd/2} \| u \|_{H^{s_0}_\e}^p \leq \rho_3,
\end{equation}
one has  
\begin{equation} \label{NS.M.2}
\| \e \op_\e(M) h \|_{H^{s_0}_\e} 
\leq C_{s_0} \e^{1-pd/2} \| u \|_{H^{s_0}_\e}^p \| h \|_{H^{s_0-1}_\e} 
\leq \tfrac12 \| h \|_{H^{s_0-1}_\e}.
\end{equation}
Therefore, by Neumann series, $I + \e \op_\e(M)$ is invertible in $H^{s_0}(\R^d)$, 
and 
\begin{equation} \label{NS.M.3}
\| (I + \e \op_\e(M))^{-1} h \|_{H^s_\e} 
\leq C_{s_0} \| h \|_{H^s_\e} 
+ C_s \e^{1-pd/2} \| u \|_{H^{s_0}_\e}^{p-1} \| u \|_{H^s_\e} \| h \|_{H^{s_0-1}_\e}
\end{equation}
for $s \geq s_0 > d/2$ and $u$ in the ball \eqref{NS.ball.2}.
For $u$ in the ball \eqref{NS.ball.2}, 
for $s \geq s_0 > d/2$, 
we deduce the following estimates:
\begin{multline} \label{NS.IMMBr}
\| (I + \e \op_\e(M))^{-1} \e \op_\e(M) \e^{-1} B_r(u, \e \pa_x) \ph \|_{H^s_\e} 
\\
\lesssim_s \e^{-pd/2} \| u \|_{H^{s_0}_\e}^{p-1} 
( \| u \|_{H^{s_0+1}_\e} \| \ph \|_{H^s_\e} 
+  \| u \|_{H^s_\e} \| \ph \|_{H^{s_0}_\e})
\end{multline}
(to prove \eqref{NS.IMMBr}, we have used \eqref{prod pax Hs}),
\begin{equation} \label{NS.Bf}
\| \e^{-1} B_{lf}(u, \e \pa_x) \ph \|_{H^s_\e}
\lesssim_s \e^{-1-pd/2} \| u \|_{H^{s_0}_\e}^{p-1} \| u \|_{H^s_\e} \| \ph \|_{L^2},
\end{equation}
\begin{multline} 
\label{NS.pat M}
\| \e \op_\e(\pa_t M) \ph \|_{H^s_\e} 
\lesssim_s \e^{1-pd/2} 
\{ \| u \|_{H^{s_0}_\e}^{p-1} \| \pa_t u \|_{H^{s_0}_\e} \| \ph \|_{H^{s-1}_\e}
\\
+ (\| u \|_{H^{s_0}_\e}^{p-1} \| \pa_t u \|_{H^s_\e} 
+ \| u \|_{H^{s_0}_\e}^{\nu} \| u \|_{H^s_\e} \| \pa_t u \|_{H^{s_0}_\e}) 
\| \ph \|_{H^{s_0-1}_\e} \}
\end{multline}
with $\nu$ defined in \eqref{def nu}, 
and
\begin{equation} \label{NS.R0}
\| R_0(u) \ph \|_{H^s_\e} 
\lesssim_s \e^{-1-pd/2} \| u \|_{H^{s_0}_\e}^{p-1} 
(\| u \|_{H^{s_0+1}_\e} \| \ph \|_{H^s_\e}
+ \| u \|_{H^{s+1}_\e} \| \ph \|_{H^{s_0}_\e}).
\end{equation}
Hence 
\begin{align} \label{NS.G}
\| G(u) \ph \|_{H^s_\e} 
& \lesssim_s \e^{-1-pd/2} \| u \|_{H^{s_0}_\e}^{p-1} 
(\| u \|_{H^{s_0+1}_\e} + \e^2 \| \pa_t u \|_{H^{s_0}_\e}) \| \ph \|_{H^s_\e}
\notag \\ & \quad \ 
+ \e^{-1-pd/2} \{ \| u \|_{H^{s_0}_\e}^{p-1} 
(\| u \|_{H^{s+1}_\e} + \e^2 \| \pa_t u \|_{H^s_\e})
\notag \\ & \quad \ 
+ \e^2 \| u \|_{H^{s_0}_\e}^{\nu} \| u \|_{H^s_\e} \| \pa_t u \|_{H^{s_0}_\e} \} 
\| \ph \|_{H^{s_0}_\e}
\end{align}
for all $s \geq s_0$.
By \eqref{X+X*} and \eqref{NS.commu.B}, 
for $X = \e^{-1} B_r(u, \e \pa_x)$, for $s \geq s_0$, 
one has 
\begin{align}
\| X + X^* \|_{\mL(L^2,L^2)} 
& \lesssim \e^{-1-pd/2} \| u \|_{H^{s_0}_\e}^{p-1} \| u \|_{H^{s_0+1}_\e},
\notag \\
\| \, [ \Lm^s_\e, X] \ph \|_{L^2} 
& \lesssim_s \e^{-1-pd/2} \| u \|_{H^{s_0}_\e}^{p-1} 
( \| u \|_{H^{s_0+1}_\e} \| \ph \|_{H^s_\e} 
+ \| u \|_{H^{s+1}_\e} \| \ph \|_{H^{s_0}_\e}),
\notag \\
| \Re \langle X \ph, \ph \rangle_{H^s_\e} | 
& \lesssim_s \e^{-1-pd/2} \| u \|_{H^{s_0}_\e}^{p-1} 
( \| u \|_{H^{s_0+1}_\e} \| \ph \|_{H^s_\e} 
+ \| u \|_{H^{s+1}_\e} \| \ph \|_{H^{s_0}_\e}) \| \ph \|_{H^s_\e}.  
\label{NS.ReX}
\end{align}
By \eqref{zero secco}, \eqref{NS.G} and \eqref{NS.ReX}, 
we get energy estimates for $\ph$:  
for $u$ in the ball
\begin{equation} \label{NS.ball.3}
\e^{-1-pd/2} \| u \|_{C^1_\e H^{s_0+2}_\e}^p \leq 1,
\end{equation}
the solution $\ph$ of the linear Cauchy problem \eqref{Cp.Q} 
satisfies 
\begin{align} \label{NS.energy.bassa}
\| \ph \|_{C^0 H^{s_0}_\e} 
& \lesssim \| g_1 \|_{C^0 H^{s_0}_\e} + \| g_2 \|_{H^{s_0}_\e},
\\
\| \ph \|_{C^0 H^s_\e} 
& \lesssim_s \| g_1 \|_{C^0 H^s_\e} + \| g_2 \|_{H^s_\e} 
\notag \\ & \quad \ 
+ \e^{-1-pd/2} \| u \|_{C^1_\e H^{s_0+2}_\e}^{p-1} \| u \|_{C^1_\e H^{s+2}_\e}
(\| g_1 \|_{C^0 H^{s_0}_\e} + \| g_2 \|_{H^{s_0}_\e})
\label{NS.energy.alta}
\end{align}
for all $s \geq s_0$. 
Hence, following the same argument as above, 
the solution $h$ of the Cauchy problem \eqref{Cp.lin} 
satisfies, for $s \geq s_0+2$, 
\begin{align} 
\| h \|_{C^1_\e H^s_\e} 
& \lesssim_s \| f_1 \|_{C^0 H^s_\e} + \| f_2 \|_{H^s_\e} 
\notag \\ & \quad \ 
+ \e^{-1-pd/2} \| u \|_{C^1_\e H^{s_0+2}_\e}^{p-1} \| u \|_{C^1_\e H^{s+2}_\e}
(\| f_1 \|_{C^0 H^{s_0}_\e} + \| f_2 \|_{H^{s_0}_\e}).
\label{NS.energy.h}
\end{align}
We have obtained the following inversion for the linear problem. 

\begin{lemma}
\label{lemma:inv senza W}
Let $s_0 > d/2$, $s \geq s_0 + 2$,
and $u \in C([0,T], H^{s+2}(\R^d)) \cap C^1([0,T], H^s(\R^d))$, 
with \eqref{NS.ball.1}, \eqref{NS.ball.2} and \eqref{NS.ball.3}.
Then for all $f_1 \in C([0,T],H^s(\R^d))$, all $f_2 \in H^s(\R^d)$, 
the linear Cauchy problem \eqref{Cp.lin} has a (unique) solution $h$,
which satisfies \eqref{NS.energy.h}.
\end{lemma}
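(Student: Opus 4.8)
\emph{Proof plan.} The statement is essentially an assembly of the estimates derived just above, so the plan is to follow the three-step strategy already used for Lemma~\ref{lemma:inv generale}, but staying entirely in the scale $H^s_\e$ and closing the energy estimate at the low level $s_0$ instead of at $L^2$. \emph{Step one: conjugation.} Under the smallness \eqref{NS.ball.2}, estimate \eqref{NS.M.2} gives $\|\e\,\op_\e(M)h\|_{H^{s_0}_\e}\le\tfrac12\|h\|_{H^{s_0}_\e}$, so $I+\e\,\op_\e(M)$ is invertible on $H^{s_0}_\e$ by Neumann series, with inverse controlled by the tame bound \eqref{NS.M.3}. The change of variable $h=(I+\e\,\op_\e(M))\ph$ then turns the Cauchy problem \eqref{Cp.lin} into \eqref{Cp.Q}, with data $g_1,g_2$ as in \eqref{def g1 g2} and generator $\pa_t+Q(u)$ as in \eqref{def Q}: by the homological identity \eqref{homo.eq} the derivative-losing, nonresonant part of $B$ has been eliminated, leaving only the skew-adjoint principal part $i\e^{-2}A(\e\pa_x)$, the Hermitian term $-\e^{-1}B_r(u,\e\pa_x)$, and the lower-order operator $G(u)$ of \eqref{def G}, for which the tame bound \eqref{NS.G} has already been established.

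\emph{Step two: energy estimates for $\ph$.} Testing $\pa_t\ph+Q(u)\ph=g_1$ against $\ph$ in $H^{s_0}_\e$ and using $\Re\langle i\e^{-2}A(\e\pa_x)\ph,\ph\rangle_{H^{s_0}_\e}=0$ from \eqref{zero secco}, the Hermitian estimate in \eqref{NS.ReX}, and \eqref{NS.G}, all evaluated at $s=s_0$, one obtains $\pa_t(\|\ph\|_{H^{s_0}_\e}^2)\lesssim(\text{coeff})\,\|\ph\|_{H^{s_0}_\e}^2+\|g_1\|_{H^{s_0}_\e}\|\ph\|_{H^{s_0}_\e}$, where the coefficient is controlled by the left-hand side of \eqref{NS.ball.3}; Gronwall on $[0,T]$ yields \eqref{NS.energy.bassa}. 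Testing next in $H^s_\e$ for $s\ge s_0$, the same three inputs bound $\pa_t(\|\ph\|_{H^s_\e}^2)$ by $(\text{coeff})\,\|\ph\|_{H^s_\e}^2$ plus the genuinely tame term $\e^{-1-pd/2}\|u\|_{C^1_\e H^{s_0+2}_\e}^{p-1}\|u\|_{C^1_\e H^{s+2}_\e}\|\ph\|_{H^{s_0}_\e}\|\ph\|_{H^s_\e}$ plus $\|g_1\|_{H^s_\e}\|\ph\|_{H^s_\e}$; substituting \eqref{NS.energy.bassa} for the $\|\ph\|_{H^{s_0}_\e}$ factor and applying Gronwall again produces \eqref{NS.energy.alta}.

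\emph{Step three: back to $h$, time derivative, existence and uniqueness.} Since $g_1=(I+\e\,\op_\e(M))^{-1}f_1$ and $g_2=(I+\e\,\op_\e(M))^{-1}\big|_{t=0}f_2$, the tame bound \eqref{NS.M.3} dominates $\|g_i\|_{H^s_\e}$ by the $H^s_\e$ norm of $f_i$ up to a lower-order correction, while $h=(I+\e\,\op_\e(M))\ph$ together with \eqref{NS.M.1} bounds $\|h\|_{H^s_\e}$ by $\|\ph\|_{H^s_\e}$ plus a tame correction; each correction is absorbed using \eqref{NS.ball.1}--\eqref{NS.ball.3}. This transfers \eqref{NS.energy.bassa}--\eqref{NS.energy.alta} to $h$, with $f_1,f_2$ in place of $g_1,g_2$. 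For the time derivative I would use the equation itself, $\pa_t h=f_1-P'(u)h$, estimating $\|P'(u)h\|_{H^{s-2}_\e}$ from \eqref{0901.2} via \eqref{NS.B} and an $H^s_\e$-version of the bound \eqref{NS.R0} for $R_0$: this costs $\e^{-2}\|h\|_{H^s_\e}$ plus a tame lower-order term, so that $\e^2\|\pa_t h\|_{C^0 H^{s-2}_\e}$ is controlled by the $C^0 H^s_\e$ norm of $h$ already obtained together with $\|f_1\|_{C^0 H^s_\e}$; combining gives \eqref{NS.energy.h} for $s\ge s_0+2$. Uniqueness follows at once from \eqref{NS.energy.bassa} applied to the difference of two solutions with $f_1=f_2=0$; existence is obtained in the usual way by a Galerkin (or Friedrichs) truncation of \eqref{Cp.Q}, whose approximants satisfy the above energy estimates uniformly in the truncation parameter and hence converge, the principal part $i\e^{-2}A(\e\pa_x)$ being a skew-adjoint Fourier multiplier.

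\emph{Main obstacle.} The only delicate point is the tame bookkeeping: one must check that the \emph{single} smallness \eqref{NS.ball.3} (together with \eqref{NS.ball.1}--\eqref{NS.ball.2}, which enter only to keep $\|u\|_{L^\infty}\le1$ and $I+\e\,\op_\e(M)$ invertible) already controls every coefficient appearing in the $H^{s_0}_\e$ energy inequality, so that the low-norm estimate genuinely closes, and that the conjugation in Step one loses no derivative --- which is exactly what the choice of symbol $M$ in \eqref{def M} and the identity \eqref{homo.eq} are designed to guarantee. Everything else reduces to Gronwall together with the product and commutator inequalities already recorded in Section~\ref{sec:FS} and earlier in this section.
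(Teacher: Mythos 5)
Your proposal follows the paper's own argument essentially verbatim: conjugation by $I+\e\,\op_\e(M)$ under \eqref{NS.ball.1}--\eqref{NS.ball.2}, $H^{s_0}_\e$ and $H^s_\e$ energy estimates via \eqref{zero secco}, \eqref{NS.ReX}, \eqref{NS.G} and Gronwall under \eqref{NS.ball.3} (giving \eqref{NS.energy.bassa}--\eqref{NS.energy.alta}), transfer back to $h$ through \eqref{NS.M.1}, \eqref{NS.M.3}, and recovery of $\pa_t h$ from the equation, exactly as the paper does to reach \eqref{NS.energy.h} for $s\ge s_0+2$. The only addition is your explicit Galerkin/Friedrichs existence argument, which the paper leaves implicit; this is a standard detail, not a different route, so the proposal is correct.
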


Also, by \eqref{1103.1} and \eqref{NS.ball.1}, for $s \geq s_0$, 
\begin{align}
\| P''(u)[h_1, h_2] \|_{H^s_\e} 
& \lesssim_s \e^{-1-pd/2} \| u \|_{H^{s_0}_\e}^{p-1} 
(\| h_1 \|_{H^{s+1}_\e} \| h_2 \|_{H^{s_0}_\e} 
+ \| h_1 \|_{H^{s_0}_\e} \| h_2 \|_{H^{s+1}_\e})
\notag \\ & \quad \ 
+ \e^{-1-(\nu+2)d/2} \| u \|_{H^{s_0}_\e}^{\nu} 
\| u \|_{H^{s+1}_\e} \| h_1 \|_{H^{s_0}_\e} \| h_2 \|_{H^{s_0}_\e}.
\label{NS.P''}
\end{align}

\section{Proof of Theorem \ref{thm:no loss}}
\label{sec:proof no loss}

For $\anm \geq 0$ real, let
\begin{align} 
E_\anm 
& := C([0,T],H^{s_0 + \anm}(\R^d)) 
\cap C^1([0,T],H^{s_0 + \anm - 2}(\R^d)), 
\label{def Ea.1} 
\\
F_\anm 
& := C([0,T],H^{s_0+\anm}(\R^d)) \times H^{s_0+\anm}(\R^d),
\label{def Fa.1}
\end{align}
and, recalling the notation in \eqref{def C Hs}, 
define
\begin{equation}
\| u \|_{E_\anm} 
:= \| u \|_{C^1_\e H^{s_0+\anm}_\e},
\qquad 
\| f \|_{F_\anm} 
= \| (f_1, f_2) \|_{F_\anm}
:= \| f_1 \|_{C^0 H^{s_0+\anm}_\e} + \| f_2 \|_{H^{s_0+\anm}_\e}.
\end{equation}
Define the smoothing operators $S_j$, $j \in \N$, 
as the ``semi-classical'' crude Fourier truncations 
\begin{equation} \label{def Sj}
S_j u(x) := (2\pi)^{-d/2} \int_{\e |\xi| \leq 2^j} \hat u(\xi) e^{i \xi \cdot x} \, d\xi,
\end{equation}
which satisfy all \eqref{S1}-\eqref{2705.4} with constants independent of $\e$. 
Define
\begin{equation} \label{def Phi.1}
\Phi(u) := \begin{pmatrix} 
\pa_t u + P(u) \\ 
u(0) \end{pmatrix},
\end{equation}
where $P(u)$ is defined in \eqref{def P u0}. 
For $\| u \|_{E_2} \leq 1$, 
the second derivative of $\Phi$ satisfies \eqref{NS.P''}, 
which gives, for all $\anm \geq 0$, 
\begin{align}
\| \Phi''(u)[h_1, h_2] \|_{F_\anm} 
& \lesssim_s \e^{-1-pd/2} \| u \|_{E_0}^{p-1} 
(\| h_1 \|_{E_{\anm+1}} \| h_2 \|_{E_0} 
+ \| h_1 \|_{E_0} \| h_2 \|_{E_{\anm+1}})
\notag \\ & \quad \ 
+ \e^{-1-(\nu+2)d/2} \| u \|_{E_0}^{\nu} 
\| u \|_{E_{\anm+1}} \| h_1 \|_{E_0} \| h_2 \|_{E_0}.
\label{NS.P''.1}
\end{align}
For $u$ in the ball 
\begin{equation} \label{NS.ball.4}
\| u \|_{E_2} \leq \e^q, \quad \ q := \frac{1}{p} + \frac{d}{2},
\end{equation}
the conditions \eqref{NS.ball.1}, 
\eqref{NS.ball.2}, 
\eqref{NS.ball.3} 
are all satisfied for $\e$ sufficiently small --- more precisely,
for $\e \in (0,\e_0]$, 
where $\e_0 := \min \{ 1, C_{s_0}^{-p}, \rho_3^{1/2} \}$, 
and $C_{s_0}, \rho_3$ are the constants in \eqref{NS.ball.1}, \eqref{NS.ball.2}, 
independent of $\e$. 
Then, for $u$ in the ball \eqref{NS.ball.4}, 
Lemma \ref{lemma:inv senza W} defines a right inverse $\Psi(u)$ 
of the linearized operator $\Phi'(u)$ 
(namely $h = \Psi(u) f$ solves the linear Cauchy problem 
$\Phi'(u) h = f$, which is \eqref{Cp.lin}),  
with bound \eqref{NS.energy.h}, which is 
\begin{equation} 
\| \Psi(u) f \|_{E_\anm} 
\lesssim_s \| f \|_{F_\anm}
+ \e^{-1-pd/2} \| u \|_{E_2}^{p-1} \| u \|_{E_{\anm+2}} \| f \|_{F_0},
\quad \ \anm \geq 2.
\label{NS.Psi.1}
\end{equation}
To reach the best radius for the initial data
(see Remark \ref{rem:best rescaling} and Remark \ref{rem:specific}), 
we introduce the rescaled norm
\begin{equation} \label{NS.rescaled.1}
\| u \|_{\mE_\anm} := \e^{-q} \| u \|_{E_\anm}.
\end{equation}
Thus \eqref{NS.ball.4} becomes 
\begin{equation} \label{NS.ball.5}
\| u \|_{\mE_2} \leq 1.
\end{equation}
By \eqref{NS.P''.1} and \eqref{NS.Psi.1}, 
for all $u$ in the unit ball \eqref{NS.ball.5}
one has
\begin{align}
\| \Phi''(u)[h_1, h_2] \|_{F_\anm} 
& \lesssim_s \e^q 
(\| h_1 \|_{\mE_{\anm+1}} \| h_2 \|_{\mE_0} 
+ \| h_1 \|_{\mE_0} \| h_2 \|_{\mE_{\anm+1}}
\notag \\ & \quad \ 
+ \| u \|_{\mE_{\anm+1}} \| h_1 \|_{\mE_0} \| h_2 \|_{\mE_0})
\label{NS.P''.2}
\end{align}
for $\anm \geq 0$, 
because $-1-(\nu+2)d/2 + q(\nu+3) \geq q$ (recall that $\nu = \max \{ p-2,0 \}$), 
and 
\begin{equation} \label{NS.Psi.2}
\| \Psi(u) f \|_{\mE_\anm} 
\lesssim_s \e^{-q} (\| f \|_{F_\anm} + \| u \|_{\mE_{\anm+2}} \| f \|_{F_0})
\end{equation}
for $\anm \geq 2$. 
Hence $\Phi$ satisfies the assumptions of Theorem \ref{thm:NMH} with
\begin{gather}
\anm_0 = 0, \quad 
\mu = \anm_1 = 2, \quad 
\b = \a > 4, \quad 
\anm_2 > 2\b - 2, \quad 
U = \{ u \in E_2 : \| u \|_{\mE_2} \leq 1 \}, 
\notag \\
\d_1 = 1, \quad 
M_1(\anm) = M_2(\anm) = C_\anm \e^q, \quad 
L_1(\anm) = L_2(\anm) = C_\anm \e^{-q}, \quad 
M_3(\anm) = L_3(\anm) = 0.
\label{LMd.1}
\end{gather}
For any function $u_0 = u_0(x) \in H^{s_0+\b}(\R^d)$, 
the pair $g = (0, u_0)\in F_\b$  
trivially satisfies the first inequality in \eqref{2705.1} with $A=1$
(in fact, the inequality is an identity), 
because $g$ does not depend on the time variable. 

Hence, by Theorem \ref{thm:NMH}, 
if $\| g \|_{F_\b} \leq \d$,  
with $\d = C \e^q$ given by \eqref{qui.02}, 
there exists $u \in E_\a$ such that 
$\Phi(u) = \Phi(0) + g = g$. 
This means that we have solved 
the nonlinear Cauchy problem \eqref{Cp.general}, 
i.e.\ $\Phi(u) = (0, u_0)$,  
on the time interval $[0,T]$ 
for all initial data $u_0$ in the ball 
\begin{equation} \label{NS.ball.6}
\| u_0 \|_{H^{s_0+\b}_\e} \leq \d = C \e^q,
\end{equation}
for all $\e \in (0, \e_0]$. 
By \eqref{qui.01}, the solution $u$ satisfies 
\[
\| u \|_{\mE_\a} 
\leq C \e^{-q} \| g \|_{F_\b},
\quad \text{i.e.} \ \ 
\| u \|_{C^1_\e H^{s_0+\b}_\e} \leq C \| u_0 \|_{H^{s_0+\b}_\e}.
\]
The higher regularity part of Theorem \ref{thm:no loss} 
is also deduced from Theorem \ref{thm:NMH}.  

For data $u_0$ of the form $u_0(x) = \e^\s (\aff_\e(x), \overline{\aff_\e(x)} )$ 
(see \eqref{def P u0}), where $\aff_\e$ is defined in \eqref{1012.3}, 
one has 
\[
\| u_0 \|_{H^s_\e} 
= \e^\s \| \aff_\e \|_{H^s_\e} 
\lesssim_s \e^{\s + \sigmaa} \| \aff \|_{H^s},
\]
see \eqref{Sob sigma a}, \eqref{def mT eps}, 
where $\sigmaa = d/2$ in the concentrating case, 
and $\sigmaa = 0$ in the fast oscillating case.
Hence $u_0$ belongs to the ball \eqref{NS.ball.6} 
for all $\e$ sufficiently small if 
\[
\| u_0 \|_{H^{s_0+\b}_\e} 
\leq C_{s_0+\b} \e^{\s + \sigmaa} \| \aff \|_{H^{s_0+\b}} 
\leq \d = C \e^q.
\]
For $\| \aff \|_{H^{s_0+\b}} \leq 1$, this holds for $\s + \sigmaa > q$, 
namely 
\[
\s > \frac{1}{p} + \frac{d}{2} - \sigmaa.
\]

Finally, given $s_1 > d/2 + 4$, we define $\g := s_1 - (d/2 + 4)$, 
$s_0 := d/2 + \g/2$, $\b := 4 + \g/2$, 
so that $s_0 > d/2$, $\b > 4$, and $s_1 = s_0 + \b$.
This concludes the proof of Theorem \ref{thm:no loss}.

\begin{remark}[\emph{Confirmation of the heuristics discussion 
of Section \ref{sec:antani} in Theorem \ref{thm:no loss}}] 
\label{rem:confirm.1}
The radius $\d$ given by the Nash-Moser Theorem \ref{thm:NMH} 
is the minimum among $1/L$, $\d_1 / L$, $1 / (L^2 M)$;
here (see \eqref{LMd.1}) 
these three quantities are all of order $\e^q$. 
In particular, the ``quadratic condition'' $\d \leq 1 / (L^2 M)$,  
coming from the use of the second derivative $\Phi''(u)$ in the Nash-Moser iteration,
does not modify $\d$. 
This is a confirmation of the heuristic discussion 
of Section \ref{sec:antani}.
\end{remark}

\section{Free flow component decomposition}
\label{sec:L infty}

The ``shifted map'' trick used in \cite{TZ} and \cite{ES} 
consists in choosing the solution of the linear part of the PDE as a starting point 
for the Nash-Moser iteration.
The reason for which that trick works is that 
the free flow of functions of special structure \eqref{1012.3} 
satisfies better estimates in $L^\infty$ norm 
than the free flow of general Sobolev functions. 
This, combined with the power $p$ of the nonlinearity in the equation,
makes it possible to obtain solutions of larger size, 
which are the sum of a free flow and a correction of smaller size.

Here we use this property in a different way, 
splitting the problem into components of special structure \eqref{1012.3} 
and corrections, introducing non-isotropic norms to catch the different size effect.

For any function $\aff \in H^s(\R^d)$ we define $\mT_\e \aff$, 
$0 < \e \leq 1$, as
\begin{equation} \label{def mT eps}
(\mT_\e \aff)(x) := 
\begin{cases}
\aff(x/\e) 
& \text{(concentrating case)},
\\
e^{ix \cdot \xi_0/\e} \aff(x)
& \text{(oscillating case)},  
\end{cases}
\end{equation}
so that, in both cases, \eqref{1012.3} 
becomes $\aff_\e = \mT_\e \aff_0$. 
To deal with conjugate pairs, define 
\[
\mT_{\e,c} \aff := (\mT_\e \aff, \overline{\mT_\e \aff}),
\quad 
\mT_{\e,c}^{-1} (\bff, \overline{\bff}) := \mT_\e^{-1} \bff.
\]
Hence the initial datum $u_0$ defined in \eqref{def P u0} 
can be written as $u_0 = \e^\s \mT_{\e,c} \aff_0$. 

\begin{lemma} \label{lemma:mTeps} 
Let $\aff \in H^s(\R^d)$, $s \geq 0$. Then the Fourier transform of $\mT_\e \aff$ is
\begin{equation} \label{explicit Fourier}
\widehat{(\mT_\e \aff)}(\xi) = \e^d \hat \aff(\e \xi) 
\ \text{(concentrating)}, 
\qquad \quad 
\widehat{(\mT_\e \aff)}(\xi) = \hat \aff(\xi - \xi_0/\e) 
\  \text{(oscillating)},
\end{equation}
and one has 
\begin{equation} \label{Sob sigma a}
\| \mT_\e \aff \|_{H^s_\e} \leq \e^{\sigmaa} (2 \| \aff \|_{H^s} + C_s \| \aff \|_{L^2})
\end{equation}
where 
\begin{equation} \label{def sigma a}
\sigmaa = d/2 
\quad \text{(concentrating)}, 
\quad \qquad 
\sigmaa = 0
\quad \text{(oscillating)}.
\end{equation}
\end{lemma}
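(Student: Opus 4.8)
The plan is to compute the Fourier transforms explicitly in each case and then estimate the weighted norm $\|\cdot\|_{H^s_\e}$ directly from its Fourier-side definition \eqref{FS.1}. For \eqref{explicit Fourier} in the concentrating case, $(\mT_\e\aff)(x) = \aff(x/\e)$ is just the dilation $R_{1/\e}\aff$ in the notation of \eqref{FS.2} (with $\e$ replaced by $1/\e$), so $\widehat{(\mT_\e\aff)}(\xi) = \e^d\hat\aff(\e\xi)$ follows from the scaling rule for the Fourier transform; in the oscillating case, multiplication by $e^{ix\cdot\xi_0/\e}$ is modulation, which translates the Fourier transform, giving $\widehat{(\mT_\e\aff)}(\xi) = \hat\aff(\xi-\xi_0/\e)$. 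These are standard and require no work.

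For \eqref{Sob sigma a} in the concentrating case, I would write
\[
\| \mT_\e \aff \|_{H^s_\e}^2
= \int_{\R^d} (1+|\e\xi|^2)^s \, \e^{2d} |\hat\aff(\e\xi)|^2 \, d\xi
= \e^{d} \int_{\R^d} (1+|\eta|^2)^s \, |\hat\aff(\eta)|^2 \, d\eta
= \e^{d} \| \aff \|_{H^s}^2
\]
after the substitution $\eta = \e\xi$, which gives exactly $\|\mT_\e\aff\|_{H^s_\e} = \e^{d/2}\|\aff\|_{H^s}$, hence \eqref{Sob sigma a} with $\sigmaa = d/2$ (in fact with constant $1$ and no $\|\aff\|_{L^2}$ term; the stated weaker bound is then immediate).

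The oscillating case is the only one that needs a genuine estimate, and it is the main point. Here $\sigmaa = 0$, and
\[
\| \mT_\e \aff \|_{H^s_\e}^2
= \int_{\R^d} (1+|\e\xi|^2)^s \, |\hat\aff(\xi-\xi_0/\e)|^2 \, d\xi
= \int_{\R^d} (1 + |\e\eta + \xi_0|^2)^s \, |\hat\aff(\eta)|^2 \, d\eta,
\]
after the translation $\eta = \xi - \xi_0/\e$. The obstacle is that the Japanese-bracket weight has been shifted by the fixed vector $\xi_0$, and one must control $(1+|\e\eta+\xi_0|^2)^{s}$ by a constant multiple of $(1+|\e\eta|^2)^{s} + (\text{const})$. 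Since $\e \leq 1$ and $|\xi_0|$ is fixed, $|\e\eta+\xi_0| \leq |\e\eta| + |\xi_0|$, so by the elementary inequality $(1+(a+b)^2)^{s} \leq C_s\big((1+a^2)^{s} + (1+b^2)^{s}\big)$ (with $C_s$ depending on $s$ and, through $b=|\xi_0|$, on $\xi_0$) we get
\[
(1+|\e\eta+\xi_0|^2)^{s} \leq C_s (1+|\e\eta|^2)^{s} + C_s (1+|\xi_0|^2)^{s} \leq C_s (1+|\e\eta|^2)^{s} + C_s',
\]
and integrating against $|\hat\aff(\eta)|^2$ yields $\|\mT_\e\aff\|_{H^s_\e}^2 \leq C_s\|\aff\|_{H^s}^2 + C_s'\|\aff\|_{L^2}^2$. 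Taking square roots and using $\sqrt{a+b}\leq\sqrt a+\sqrt b$ gives \eqref{Sob sigma a} with $\sigmaa=0$; the explicit constant $2$ in front of $\|\aff\|_{H^s}$ can be arranged by choosing the splitting constant appropriately, or one simply absorbs everything into $C_s$ — the precise numerical value is irrelevant for the later application.
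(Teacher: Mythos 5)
Your proposal is correct and follows essentially the same route as the paper: the explicit Fourier formulas, the exact identity $\|\mT_\e \aff\|_{H^s_\e} = \e^{d/2}\|\aff\|_{H^s}$ in the concentrating case, and in the oscillating case the change of variable $\eta = \xi - \xi_0/\e$ followed by a splitting of the shifted bracket, which is exactly the paper's elementary inequality \eqref{elem.1} (Lemma \ref{lemma:elementary.2}). The only difference is that the paper's Lemma \ref{lemma:elementary.2} is stated with the sharp splitting constant $4$, so the factor $2$ in \eqref{Sob sigma a} comes out directly, whereas you leave it generic and correctly note it can be arranged by tuning the Peter--Paul parameter.
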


\begin{proof} Formula \eqref{explicit Fourier} is a direct calculation. 
Then, in the concentrating case, $\| \mT_\e \aff \|_{H^s_\e} = \e^{d/2} \| \aff \|_{H^s}$. 
In the oscillating case, using the change of variable $\xi - \xi_0/\e = \eta$
and applying \eqref{elem.1}, 
one has $\| \mT_\e \aff \|_{H^s_\e} \leq 2 \| \aff \|_{H^s} + C_s |\xi_0|^s \| \aff \|_{L^2}$. 
\end{proof}

Given any $y_0 \in H^s(\R^d)$, let $y = \mS y_0$ denote the solution 
of the linear Cauchy problem
\begin{equation} \label{free flow}
\begin{cases}
\pa_t y + i \e^{-2} A(\e \pa_x) y = 0, \\
y(0,x) = y_0(x),
\end{cases}
\end{equation}
so that $\mS$ is the free Schr\"odinger solution map.
For initial data of type $\mT_{\e,c} \aff$, the flow $\mS \mT_{\e,c} \aff$ 
has special properties, which are used in the proof of Theorem 4.6 in \cite{TZ}, 
that we recall in the following lemma. 

\begin{lemma} \label{lemma:free flow}
For all real $s \geq 0$, $s_0 > d/2$, all multi-indices $\a \in \N^d$, 
for all $t \in \R$ the solution 
\[
y = \mS \mT_{\e,c} \aff
\] 
of the linear Cauchy problem \eqref{free flow} 
with initial datum $y_0 = \mT_{\e,c} \aff$
satisfies 
\begin{align} 
\label{y infty}
\| y(t) \|_{L^\infty} 
& \leq C_{s_0} \| \aff \|_{H^{s_0}}, 
\\
\e^2 \| \pa_t y(t) \|_{L^\infty} 
& \leq C_{s_0} \| \aff \|_{H^{s_0+2}},
\label{pat y infty}
\\
\e^{|\a|} \| \pa_x^\a y(t) \|_{L^\infty} 
& \leq C_{|\a|,s_0} \| \aff \|_{H^{s_0+|\a|}},
\label{pax y infty}
\\
\| y(t) \|_{H^s_\e} 
& = \| \mT_\e \aff \|_{H^s_\e}.
\label{Sobolev y}
\end{align}
\end{lemma}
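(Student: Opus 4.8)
The plan is to reduce everything to elementary properties of the free Schr\"odinger propagator on the Fourier side. Since $\e^{-2}A(\e\pax)=A(\pax)=\mathrm{diag}(\lm_1,\dots,\lm_N,-\lm_1,\dots,-\lm_N)\Delta$ is diagonal, the system \eqref{free flow} decouples; writing $y=(\tilde y,\overline{\tilde y})$, each component $\tilde y_j$ solves $\pa_t\tilde y_j+i\lm_j\Delta\tilde y_j=0$ with $\tilde y_j(0)=\mT_\e\aff_j$, and the remaining $N$ components are the complex conjugates, hence obey the same pointwise and Sobolev bounds. Because the $\lm_j$ are real (Assumption \ref{transpa}$(i)$), the propagator is the unimodular Fourier multiplier $\widehat{\tilde y_j}(t,\xi)=e^{it\lm_j|\xi|^2}\,\widehat{\mT_\e\aff_j}(\xi)$, so $|\widehat{\tilde y_j}(t,\xi)|=|\widehat{\mT_\e\aff_j}(\xi)|$ for every $t\in\R$, and it commutes with the multiplier $\Lm^s_\e$. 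Since the $H^s_\e$ norm is a weighted $L^2$ functional of the modulus of the Fourier transform (see \eqref{FS.1}) and conjugation leaves that modulus invariant, it follows at once that $t\mapsto\|y(t)\|_{H^s_\e}$ is constant, equal to $\|y(0)\|_{H^s_\e}=\|\mT_\e\aff\|_{H^s_\e}$; this is \eqref{Sobolev y}.

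For \eqref{y infty}--\eqref{pax y infty} I would use the elementary inequality $\|w\|_{L^\infty}\lesssim\|\widehat w\|_{L^1}$. Taking $w=\pax^\a\tilde y_j(t)$, and rewriting $\pa_t\tilde y_j=-i\lm_j\Delta\tilde y_j$ via the equation, all three left-hand sides are bounded by $L^1$ norms of $\xi\mapsto\xi^\b\,\widehat{\tilde y_j}(t,\xi)$ with $|\b|\le|\a|$ (and with the multi-index coming from $\Delta$, i.e. effectively $|\b|=2$, for the $\pa_t$ estimate); by the conservation of $|\widehat{\tilde y_j}(t,\cdot)|$ these $L^1$ norms are independent of $t$ and equal $\|\xi^\b\,\widehat{\mT_\e\aff_j}\|_{L^1}$. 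Now I would insert the explicit transforms \eqref{explicit Fourier}. In the concentrating case $\widehat{\mT_\e\aff_j}(\xi)=\e^d\widehat{\aff_j}(\e\xi)$, and the substitution $\eta=\e\xi$ gives $\|\xi^\b\,\widehat{\mT_\e\aff_j}\|_{L^1}=\e^{-|\b|}\|\eta^\b\,\widehat{\aff_j}\|_{L^1}$. In the oscillating case $\widehat{\mT_\e\aff_j}(\xi)=\widehat{\aff_j}(\xi-\xi_0/\e)$, and the translation $\eta=\xi-\xi_0/\e$ together with $|(\eta+\xi_0/\e)^\b|\le C_\b\,\e^{-|\b|}(|\eta|^{|\b|}+|\xi_0|^{|\b|})$, valid for $0<\e\le1$, gives $\|\xi^\b\,\widehat{\mT_\e\aff_j}\|_{L^1}\le C_\b\,\e^{-|\b|}(\|\,|\eta|^{|\b|}\widehat{\aff_j}\|_{L^1}+|\xi_0|^{|\b|}\,\|\widehat{\aff_j}\|_{L^1})$. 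Finally, by Cauchy--Schwarz and $s_0>d/2$ one has, for every integer $m\ge0$, $\|\,|\eta|^m\widehat{\aff_j}\|_{L^1}\le\|(1+|\eta|^2)^{-s_0/2}\|_{L^2}\,\|(1+|\eta|^2)^{(s_0+m)/2}\widehat{\aff_j}\|_{L^2}\le C_{m,s_0}\|\aff_j\|_{H^{s_0+m}}$. Collecting powers: the prefactor $\e^{|\a|}$ of \eqref{pax y infty} (resp.\ $\e^2$ of \eqref{pat y infty}) exactly absorbs the $\e^{-|\b|}$ produced by the rescaling, and summing over $j$ (using $\|\aff_j\|_{H^m}\le\|\aff\|_{H^m}$) yields all three bounds, uniformly in $t\in\R$.

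An equivalent, more geometric route uses the scaling symmetries directly: in the concentrating case $\tilde y_j(t,x)=u_j(t/\e^2,x/\e)$, where $u_j$ is the \emph{standard} free Schr\"odinger evolution of $\aff_j$; in the oscillating case a Galilean boost gives $\tilde y_j(t,x)=e^{i\phi_j(t,x)}u_j(t,x+2t\lm_j\xi_0/\e)$ with $\phi_j$ real and $\pax\phi_j\equiv\xi_0/\e$; in both cases one concludes from the fact that $\|u_j(t)\|_{H^m}=\|\aff_j\|_{H^m}$ for all $t$ and all $m\ge0$, together with the Sobolev embedding $H^{s_0}\hookrightarrow L^\infty$ ($s_0>d/2$), differentiating as needed. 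I do not expect any real obstacle here: the whole argument is the conservation of $|\widehat{\tilde y_j}(t,\cdot)|$ plus the two explicit rescalings. The only point demanding a little care is the bookkeeping of powers of $\e$ in the oscillating case, where each derivative falling on the carrier wave $e^{ix\cdot\xi_0/\e}$ produces a factor $\xi_0/\e$; this is harmless since every such factor is paired with one power of $\e$ from the prefactor and $0<\e\le1$.
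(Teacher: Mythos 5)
Your argument is correct and is essentially the paper's own proof: bound $L^\infty$ by the $L^1$ norm of the Fourier transform, use that the free flow preserves $|\hat y(t,\xi)|$, insert the explicit transforms \eqref{explicit Fourier}, and control the resulting weighted $L^1$ norms by $H^{s_0+m}$ via Cauchy--Schwarz with $s_0>d/2$; your bookkeeping of the $\e$-powers in the concentrating and oscillating cases matches the paper's. The additional scaling/Galilean-boost remark is a fine alternative but not needed.
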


\begin{proof}
At each $t$ one has $|y(t,x)| \lesssim \| \hat y(t,\cdot) \|_{L^1}$ 
by inverse Fourier formula, 
and $|\hat y(t,\xi)| = |\hat y(0,\xi)| = |\hat{(\mT_{\e,c} \aff)}|$ for all $t,\xi$ 
because $y$ solves \eqref{free flow}. 
By \eqref{explicit Fourier}, one has 
$\| \hat{(\mT_\e \aff)} \|_{L^1} = \| \hat \aff \|_{L^1}$ 
in both cases. 
This proves \eqref{y infty} because, by H\"older's inequality, 
$\| \hat \aff \|_{L^1} \lesssim_{s_0} \| \aff \|_{H^{s_0}}$.

To prove \eqref{pat y infty} we use the equation in \eqref{free flow}
recalling that $\e^{-2} A(\e \pa_x) = A(\pa_x)$. 
Proceeding as above, we get 
$|\pa_t y(t,x)| \lesssim \int |\xi|^2 |\hat{(\mT_\e \aff)}(\xi)| \, d\xi$, 
then we use \eqref{explicit Fourier} to conclude.
Similarly, \eqref{pax y infty} follows from 
$|\pa_x^\a y(t,x)| \lesssim \int |\xi|^{|\a|} |\hat{(\mT_\e \aff)}(\xi)| \, d\xi$. 
Finally, \eqref{Sobolev y} is trivial.
\end{proof}

We look for a solution of the Cauchy problem \eqref{Cp.general} 
by decomposing the unknown $u$ into the sum of the 
solution of the free Schr\"odinger equation with initial datum 
$u_0$ of the form \eqref{def P u0} 
and a ``correction'' $\tilde u(t,x)$ of smaller size. 

For any pair $(\aff, \tilde u)$ where $\aff = \aff(x) \in H^s(\R^d)$
and $\tilde u = \tilde u(t,x) \in C^0([0,T],H^s(\R^d)) \cap C^1([0,T], H^{s-2}(\R^d))$ 
with $\tilde u(0,x) = 0$, 
we define 
\begin{equation} \label{def Phi tilde}
\tilde \Phi(\aff, \tilde u) := 
\begin{pmatrix}
\pa_t u + P(u) \\ 
\aff 
\end{pmatrix}
\quad \text{where} \ u = \e^\s \mS \mT_{\e,c} \aff + \tilde u.
\end{equation}
At time $t=0$ the function $u$ in \eqref{def Phi tilde}
satisfies $u(0) = \e^\s \mT_{\e,c} \aff$.
Hence the Cauchy problem \eqref{Cp.general} becomes 
\begin{equation} \label{1703.1}
\tilde \Phi(\aff,\tilde u) = (0, \aff_0).
\end{equation}
We solve \eqref{1703.1} by applying our Nash-Moser-H\"ormander theorem; 
therefore we have to construct a right inverse for the linearized operator 
and to estimate the second derivative. 
We only have to adapt the general analysis of Section \ref{sec:general} 
to functions $u$ of the form \eqref{def Phi tilde}. 

\medskip

\textbf{Right inverse of the linearized operator.}
The differential of $\tilde \Phi$ at the point $(\aff, \tilde u)$ 
in the direction $(\bff, \tilde h)$ is 
\begin{equation} \label{def u h}
\tilde \Phi'(\aff, \tilde u) (\bff, \tilde h) 
= \begin{pmatrix} 
\pa_t h + P'(u) h \\ \bff 
\end{pmatrix} 
\quad \text{where} \ \ 
u = \e^\s \mS \mT_{\e,c} \aff + \tilde u, 
\ \ 
h = \e^\s \mS \mT_{\e,c} \bff + \tilde h
\end{equation}
and $\tilde u(0) = 0$, $\tilde h(0) = 0$. 
Given $(\aff, \tilde u)$ and $g = (g_1, g_2)$, with $g_1 = g_1(t,x)$ and $g_2 = g_2(x)$, 
the right inversion problem for the linearized operator $\tilde \Phi'(\aff, \tilde u)$ 
consists in finding $(\bff, \tilde h)$ such that
\begin{equation} \label{lin.g}
\tilde \Phi'(\aff, \tilde u) (\bff, \tilde h) = g, 
\quad \ \text{i.e.} \ \ 
\begin{cases}
\pa_t h + P'(u) h = g_1,
\\
\bffb = g_2
\end{cases}
\end{equation}
with $u,h$ as in \eqref{def u h}.
Since the free flow  
$\e^\s \mS \mT_{\e,c} \bff = \e^\s \mS \mT_{\e,c} g_2$ solves \eqref{free flow}, 
and $\tilde h(0) = 0$ by construction,  
\eqref{lin.g} is equivalent to the following problem for $\tilde h$: 
\begin{equation} \label{lin.with}
\begin{cases} 
\pa_t \tilde h + P'(u) \tilde h 
= g_1 + \e^{-1} B(u, \e \pa_x) \e^\s \mS \mT_{\e,c} g_2
- R_0(u) \e^\s \mS \mT_{\e,c} g_2, \\
\tilde h(0) = 0,
\end{cases}
\end{equation}
namely $\tilde h$ has to solve the linear Cauchy problem \eqref{Cp.lin} 
with 
\begin{equation} \label{fg spar}
f_1 = g_1 + \e^{-1} B(u, \e \pa_x) \e^\s \mS \mT_{\e,c} g_2
- R_0(u) \e^\s \mS \mT_{\e,c} g_2, \quad \ 
f_2 = 0.
\end{equation}
The solution of \eqref{Cp.lin} is estimated in Lemma \ref{lemma:inv generale};
to apply that lemma, now we check that $u$ satisfies its hypotheses.
By Lemma \ref{lemma:free flow}, 
\eqref{FS.emb}, 
\eqref{FS.der}, 
\eqref{def C Hs},  
\eqref{def C Wm}, 
the function $u = \e^\s \mS \mT_{\e,c} \aff + \tilde u$ satisfies 
\begin{align}
\| u \|_{C^1_\e W^m_\e} 
& \lesssim_{s_0,m} \e^\s \| \aff \|_{H^{s_0+m}} 
+ \e^{-d/2} \| \tilde u \|_{C^1_\e H^{s_0+m}_\e}, 
\quad \ m \in \N.
\label{stima u infty}
\end{align}
For all $s$, let
\begin{equation} \label{def Xs}
\| (\aff, \tilde u) \|_{X^s} := \e^\s \| \aff \|_{H^s} 
+ \e^{-d/2} \| \tilde u \|_{C^1_\e H^s_\e}.
\end{equation}
By \eqref{stima u infty}, one has, in particular,
\begin{equation} \label{u bassa}
\| u \|_{W^{2,\infty}_\e} + \e^2 \| \pa_t u \|_{L^\infty} 
\lesssim_{s_0} \| (\aff, \tilde u) \|_{X^{s_0+2}},
\end{equation}
and therefore there exists $\rho_1 \in (0,1]$, 
depending only on $s_0$ and on the nonlinearity of the problem, 
such that, for $(\aff, \tilde u)$ in the ball 
\begin{equation} \label{ball.1903.1}
\e^{-1} \| (\aff, \tilde u) \|_{X^{s_0+2}}^p \leq \rho_1,
\end{equation}
the function $u = \e^\s \mS \mT_{\e,c} \aff + \tilde u$ 
satisfies \eqref{ansatz.marzo.1} and \eqref{ball for M}.
Hence Lemma \ref{lemma:inv generale} applies,
and $\tilde h$ satisfies bound \eqref{stima inv generica}. 
Moreover, assuming \eqref{ball.1903.1}, 
the factor in $u$ appearing in \eqref{stima inv generica} 
satisfies
\begin{align}
& (\| u \|_{C^0 L^\infty}^{p-1} \| u \|_{C^1_\e W^{[s]+3}_\e} 
+ \| u \|_{C^0 L^\infty}^\nu \| u \|_{C^0 W^{[s]+1}_\e} \| u \|_{C^1_\e W^2_\e})
\notag \\ & \qquad 
\lesssim_s 
\| (\aff, \tilde u) \|_{X^{s_0+2}}^{p-1} \| (\aff,\tilde u) \|_{X^{[s] + s_0 + 3}}
+ \| (\aff, \tilde u) \|_{X^{s_0+2}}^{\nu+1} \| (\aff,\tilde u) \|_{X^{[s] + s_0 + 1}}
\notag \\ & \qquad 
\lesssim_s
\| (\aff, \tilde u) \|_{X^{s_0+2}}^{p-1} \| (\aff,\tilde u) \|_{X^{s + s_0 + 3}}
\label{u spar}
\end{align}
because $[s] \leq s$, 
$\nu + 1 = \max \{p-2, 0 \} + 1 \geq p-1$ 
and $\| (\aff, \tilde u) \|_{X^{s_0+2}} \leq 1$. 

Thus we have to estimate $f_1$ in \eqref{fg spar}. 
By \eqref{B generica} and \eqref{est R0 Hs}, 
using 
\eqref{u bassa}, 
\eqref{stima u infty}, 
\eqref{def Xs}, 
\eqref{Sobolev y}
and Lemma \ref{lemma:mTeps}, 
for all $s \geq 0$ one has 
\begin{align}
\| \e^{-1} B(u, \e \pa_x) \e^\s \mS \mT_{\e,c} g_2 \|_{H^s_\e} 
& \lesssim_s \e^{\s + \sigmaa - 1} \| (\aff, \tilde u) \|_{X^{s_0+2}}^p \| g_2 \|_{H^{s+1}}
\notag \\ & \quad \ \ 
+ \e^{\s + \sigmaa - 1} \| (\aff, \tilde u) \|_{X^{s_0+2}}^{p-1} 
\| (\aff, \tilde u) \|_{X^{[s]+s_0+1}} \| g_2 \|_{H^1},
\label{stima B g2}
\\
\| R_0(u) \e^\s \mS \mT_{\e,c} g_2 \|_{H^s_\e} 
& \lesssim_s \e^{\s + \sigmaa - 1} \| (\aff, \tilde u) \|_{X^{s_0+2}}^p \| g_2 \|_{H^s}
\notag \\ & \quad \ \ 
+ \e^{\s + \sigmaa - 1} \| (\aff, \tilde u) \|_{X^{s_0+2}}^{p-1} 
\| (\aff, \tilde u) \|_{X^{[s]+s_0+2}} \| g_2 \|_{L^2}.
\label{stima R0 g2}
\end{align}
By \eqref{fg spar}, \eqref{stima B g2}, 
\eqref{stima R0 g2}, \eqref{u spar} and Lemma \ref{lemma:inv generale}, 
for $(\aff, \tilde u)$ in the ball \eqref{ball.1903.1}, for $s \geq 1$ we obtain 
\begin{align}
\| \tilde h \|_{C^1_\e H^s_\e} 
& \lesssim_s \| g_1 \|_{C^0 H^s_\e} 
+ \e^{-1} \| (\aff, \tilde u) \|_{X^{s_0+2}}^{p-1} 
\| (\aff, \tilde u) \|_{X^{s+s_0+3}} \| g_1 \|_{C^0 L^2}
\notag \\ & \quad \ \ 
+ \e^{\s+\sigmaa - 1} \| (\aff, \tilde u) \|_{X^{s_0+2}}^p \| g_2 \|_{H^{s+1}} 
\notag \\ & \quad \ \ 
+ \e^{\s+\sigmaa - 1} \| (\aff, \tilde u) \|_{X^{s_0+2}}^{p-1} 
\| (\aff, \tilde u) \|_{X^{s+s_0+3}} \| g_2 \|_{H^1}.
\label{est tilde h.2}
\end{align}
Since $\bff = g_2$, we get 
\begin{align}
\| (\bff, \tilde h) \|_{X^s} 
& = \e^\s \| \bff \|_{H^s} + \e^{-d/2} \| \tilde h \|_{C^1_\e H^s_\e}
\notag \\ 
& \lesssim_s 
\e^{-d/2} \| g_1 \|_{C^0 H^s_\e} 
+ \e^{-1-d/2} \| (\aff, \tilde u) \|_{X^{s_0+2}}^{p-1} 
\| (\aff, \tilde u) \|_{X^{s+s_0+3}} \| g_1 \|_{C^0 L^2}
\notag \\ & \quad \ \ 
+ \e^\s (1 + \e^{\sigmaa-1-d/2} \| (\aff, \tilde u) \|_{X^{s_0+2}}^p) \| g_2 \|_{H^{s+1}} 
\notag \\ & \quad \ \ 
+ \e^{\s+\sigmaa-1-d/2} \| (\aff, \tilde u) \|_{X^{s_0+2}}^{p-1} 
\| (\aff, \tilde u) \|_{X^{s+s_0+3}} \| g_2 \|_{H^1}
\label{est Psi.3}
\end{align}
for all $(\aff, \tilde u)$ in the ball \eqref{ball.1903.1}. 
As explained in Remark \ref{rem:best rescaling} in general, 
and in Remark \eqref{rem:specific} for our specific problem, 
for $p > 1$ it is convenient 
\begin{itemize}
\item[$(i)$] 
to consider $(\aff, \tilde u)$ in the ball 
\begin{equation} \label{ball.smaller}
\| (\aff, \tilde u) \|_{X^{s_0+2}} \leq \rho_2 \e^{(1+d/2-\sigmaa)/p}, 
\quad \ \text{i.e.} \ \ 
\e^{\sigmaa-1-d/2} \| (\aff, \tilde u) \|_{X^{s_0+2}}^p \leq \rho_1, 
\quad 
\rho_2 := \rho_1^{1/p},
\end{equation}
which is smaller than the ball \eqref{ball.1903.1} if $\sigmaa = 0$,
and it is the same ball if $\sigmaa = d/2$; 

\item[$(ii)$] 
to rescale $\| \ \|_{X^s}$ 
so that \eqref{ball.smaller} becomes a ball with radius $O(1)$ 
(i.e., independent of $\e$) in the rescaled norm.  
\end{itemize}

Thus we define
\begin{equation} \label{def mZs}
\| (\aff, \tilde u) \|_{\mZ^s} 
:= \e^{(\sigmaa - 1 - d/2)/p} \| (\aff, \tilde u) \|_{X^s},
\end{equation}
and \eqref{est Psi.3} becomes
\begin{align}
\| (\bff, \tilde h) \|_{\mZ^s} 
& \lesssim_s 
\e^{-d/2 + (\sigmaa - 1 - d/2)/p} \| g_1 \|_{C^0 H^s_\e} 
+ \e^{\s + (\sigmaa - 1- d/2)/p} (1 + \| (\aff, \tilde u) \|_{\mZ^{s_0+2}}^p) \| g_2 \|_{H^{s+1}} 
\notag \\ & \quad \ \ 
+ \e^{\s + (\sigmaa - 1- d/2)/p} \| (\aff, \tilde u) \|_{\mZ^{s_0+2}}^{p-1} 
\| (\aff,\tilde u) \|_{\mZ^{s + s_0 + 3}}
(\e^{-\s-\sigmaa} \| g_1 \|_{C^0 L^2} + \| g_2 \|_{H^1})
\label{est Psi.4}
\end{align}
for all $s \geq 1$, all $(\aff, \tilde u)$ in the ball 
\begin{equation} \label{ball.2003.1}
\| (\aff, \tilde u) \|_{\mZ^{s_0+2}} \leq \rho_2.
\end{equation}
Therefore, in the case $p>1$, 
\begin{align}
\| (\bff, \tilde h) \|_{\mZ^s} 
& \lesssim_s 
\e^{\s + (\sigmaa - 1- d/2)/p} \big\{ ( \e^{-\s-d/2} \| g_1 \|_{C^0 H^s_\e} + \| g_2 \|_{H^{s+1}})
\notag \\ & \quad \ \ 
+ \| (\aff,\tilde u) \|_{\mZ^{s + s_0 + 3}}
(\e^{-\s-\sigmaa} \| g_1 \|_{C^0 L^2} + \| g_2 \|_{H^1}) \big\}
\label{est Psi.7}
\end{align}
for all $s \geq 1$, all $(\aff, \tilde u)$ in the ball \eqref{ball.2003.1}.

For $p=1$, the restriction to the ball \eqref{ball.smaller} is not convenient
(see Remark \ref{rem:best rescaling} 
and Remark \ref{rem:specific}),  
and we take, instead, $u$ in the entire ball \eqref{ball.1903.1}. 
Hence, for $p=1$, we define 
\begin{equation} \label{def Zs}
\| (\aff, \tilde u) \|_{Z^s} 
:= \e^{-1} \| (\aff, \tilde u) \|_{X^s},
\end{equation}
and \eqref{est Psi.3} becomes 
\begin{align}
\| (\bff, \tilde h) \|_{Z^s} 
& \lesssim_s 
\e^{-1 -d/2} \| g_1 \|_{C^0 H^s_\e} 
+ \e^{-1-d/2} \| (\aff, \tilde u) \|_{Z^{s+s_0+3}} \| g_1 \|_{C^0 L^2}
\notag \\ & \quad \ \ 
+ \e^{\s-1} (1 + \e^{\sigmaa-d/2} \| (\aff, \tilde u) \|_{Z^{s_0+2}}) \| g_2 \|_{H^{s+1}} 
\notag \\ & \quad \ \ 
+ \e^{\s+\sigmaa-1-d/2} \| (\aff, \tilde u) \|_{Z^{s+s_0+3}} \| g_2 \|_{H^1}
\label{est Psi.8}
\end{align}
for all $(\aff, \tilde u)$ in the ball 
\begin{equation} \label{ball.1903.2}
\| (\aff, \tilde u) \|_{Z^{s_0+2}} \leq \rho_2. 
\end{equation}
Therefore, in the case $p=1$, 
\begin{align}
\| (\bff, \tilde h) \|_{Z^s} 
& \lesssim_s 
\e^{\s+\sigmaa-1-d/2} \big\{ (\e^{-\s-\sigmaa} \| g_1 \|_{C^0 H^s_\e} + \| g_2 \|_{H^{s+1}} )
\notag \\ & \quad \ \ 
+ \| (\aff, \tilde u) \|_{Z^{s+s_0+3}} 
(\e^{-\s-\sigmaa} \| g_1 \|_{C^0 L^2} + \| g_2 \|_{H^1}) \big\}
\label{est Psi.9}
\end{align}
for all $s \geq 1$, all $(\aff, \tilde u)$ in the ball \eqref{ball.1903.2}.

Note that we have used norms $\| \ \|_{Z^s}$ for $p=1$ 
and norms $\| \ \|_{\mZ^s}$ for $p > 1$.

\bigskip

\textbf{Estimate for the second derivative.} 
By \eqref{def Xs}, 
\eqref{stima u infty}, 
\eqref{Sobolev y},
and Lemma \ref{lemma:mTeps}, 
any function $u = \e^\s \mS \mT_{\e,c} \aff + \tilde u$ satisfies
\begin{align*}
\| u \|_{H^s_\e} 
& \lesssim_s \e^{\s + \sigmaa} \| \aff \|_{H^s} + \| \tilde u \|_{H^s_\e}
\lesssim_s \e^{\sigmaa} \| (\aff, \tilde u) \|_{X^s},
\\
\| u \|_{L^\infty} 
& \lesssim \| (\aff, \tilde u) \|_{X^{s_0}},
\\ 
\| u \|_{W^{1,\infty}_\e} 
& \lesssim \| (\aff, \tilde u) \|_{X^{s_0+1}}.
\end{align*}
From \eqref{1103.1} we deduce that
\begin{align}
& \| P''(u)[h_1, h_2] \|_{H^s_\e} 
\notag \\ & \quad \quad
\lesssim_s \e^{\sigmaa - 1} \| (\aff, \tilde u) \|_{X^{s_0}}^{p-1} 
\big( \| (\bff_1, \tilde h_1) \|_{X^{s+1}} \| (\bff_2, \tilde h_2) \|_{X^{s_0}}
+ \| (\bff_1, \tilde h_1) \|_{X^{s_0}} \| (\bff_2, \tilde h_2) \|_{X^{s+1}} \big) 
\notag \\ & \quad \quad \quad \ 
+ \e^{\sigmaa - 1} \| (\aff, \tilde u) \|_{X^{s_0}}^{\nu} \| (\aff, \tilde u) \|_{X^{s+1}}
\| (\bff_1, \tilde h_1) \|_{X^{s_0}} \| (\bff_2, \tilde h_2) \|_{X^{s_0}}
\label{est.der.sec.X}
\end{align}
for $u = \e^\s \mS \mT_{\e,c} \aff + \tilde u$,  
$h_i = \e^\s \mS \mT_{\e,c} \bff_i + \tilde h_i$, $i=1,2$, 
and $s \geq 0$. 

With the norms $\| \ \|_{\mZ^s}$ defined in \eqref{def mZs}, 
which we use in the case $p>1$, 
from \eqref{est.der.sec.X} we get
\begin{align*}
& \| P''(u)[h_1, h_2] \|_{H^s_\e} 
\notag \\ & \quad \quad
\lesssim_s \e^{d/2 + (1+d/2-\sigmaa)/p} \| (\aff, \tilde u) \|_{\mZ^{s_0}}^{p-1} 
\big( \| (\bff_1, \tilde h_1) \|_{\mZ^{s+1}} \| (\bff_2, \tilde h_2) \|_{\mZ^{s_0}}
+ \| (\bff_1, \tilde h_1) \|_{\mZ^{s_0}} \| (\bff_2, \tilde h_2) \|_{\mZ^{s+1}} \big) 
\notag \\ & \quad \quad \quad \ 
+ \e^{d/2 + (1+d/2-\sigmaa)/p} \| (\aff, \tilde u) \|_{\mZ^{s_0}}^{\nu} 
\| (\aff, \tilde u) \|_{\mZ^{s+1}}
\| (\bff_1, \tilde h_1) \|_{\mZ^{s_0}} \| (\bff_2, \tilde h_2) \|_{\mZ^{s_0}}.
\end{align*} 
Hence, for $(\aff,\tilde u)$ in the ball \eqref{ball.2003.1}, 
for $s \geq 0$, in the case $p>1$,
one has 
\begin{align}
& \| P''(u)[h_1, h_2] \|_{H^s_\e} 
\notag \\ & \quad \quad 
\lesssim_s \e^{d/2 + (1+d/2-\sigmaa)/p}
\big\{ \| (\bff_1, \tilde h_1) \|_{\mZ^{s+1}} \| (\bff_2, \tilde h_2) \|_{\mZ^{s_0}}
+ \| (\bff_1, \tilde h_1) \|_{\mZ^{s_0}} \| (\bff_2, \tilde h_2) \|_{\mZ^{s+1}} 
\notag \\ & \qquad \quad \ 
+ \| (\aff, \tilde u) \|_{\mZ^{s+1}}
\| (\bff_1, \tilde h_1) \|_{\mZ^{s_0}} \| (\bff_2, \tilde h_2) \|_{\mZ^{s_0}} \big\}.
\label{est.der.sec.mZ}
\end{align} 

For $p=1$,
with the norms $\| \ \|_{Z^s}$ defined in \eqref{def Zs}, 
for $(\aff,\tilde u)$ in the ball \eqref{ball.1903.2}, 
for $s \geq 0$, one has 
\begin{align}
\| P''(u)[h_1, h_2] \|_{H^s_\e} 
& \lesssim_s \e^{\sigmaa + 1} 
\big\{ \| (\bff_1, \tilde h_1) \|_{Z^{s+1}} \| (\bff_2, \tilde h_2) \|_{Z^{s_0}}
+ \| (\bff_1, \tilde h_1) \|_{Z^{s_0}} \| (\bff_2, \tilde h_2) \|_{Z^{s+1}} 
\notag \\ & \quad \ 
+ \| (\aff, \tilde u) \|_{Z^{s+1}}
\| (\bff_1, \tilde h_1) \|_{Z^{s_0}} \| (\bff_2, \tilde h_2) \|_{Z^{s_0}} \big\}.
\label{est.der.sec.Z}
\end{align}

\begin{remark}[\emph{Best rescaling for Nash-Moser application}]
\label{rem:best rescaling}
In this remark we discuss a general, simple way to choose the best rescaling 
to obtain the largest size ball for the solution 
when applying the Nash-Moser Theorem \ref{thm:NMH}
(or essentially any other Nash-Moser theorem).

Suppose we have a nonlinear operator $\Phi$
and a right inverse $\Psi(u)$ of its linearized operator $\Phi'(u)$, 
satisfying an estimate of the form
\begin{equation} \label{rema.1}
\| \Psi(u) g \|_{X^s} \leq (A + B \| u \|_{X^{s_0}}^p) \| g \|_{Y^s} 
+ C \| u \|_{X^{s_0}}^{p-1} \| u \|_{X^s} \| g \|_{Y^{s_0}}
\end{equation}
for all $u$ in a low norm ball
\begin{equation} \label{rema.2}
\| u \|_{X^{s_0}} \leq R
\end{equation}
for some positive constants $A,B,C,R$, 
where $\| \ \|_{X^s}$ are the norms on the domain of $\Phi$, 
$\| \ \|_{Y^s}$ are those on its codomain, 
and $s$ denotes high norms, while $s_0$ denotes low norms
(we ignore any possible loss of regularity, which is not the point 
in this discussion). 
From \eqref{rema.1}, \eqref{rema.2} we deduce bound
\begin{equation} \label{rema.3}
\| \Psi(u) g \|_{X^s} \leq (A + B R^p) \| g \|_{Y^s} 
+ C R^{p-1} \| u \|_{X^s} \| g \|_{Y^{s_0}}
\end{equation}
for $u$ in the ball \eqref{rema.2}. 
Then Theorem \ref{thm:NMH} gives a solution of the problem 
$\Phi(u) = \Phi(0) + g$ for all data $g$ in the ball 
\begin{equation} \label{rema.4}
\| g \|_{Y^{s_0}} \leq \d
\end{equation}
where (ignoring, at least for the moment, the contribution to $\d$ 
coming from the second derivative $\Phi''(u)[h_1, h_2]$ of the operator $\Phi$) 
the radius $\d$ is essentially given by 
\begin{equation} \label{rema.5}
\d = \min \Big\{ \frac{1}{L} , \frac{R}{L} \Big\}, 
\quad \ 
L = A + B R^p + C R^{p-1}.
\end{equation}
Our goal is to find the best (i.e.\ the largest possible) radius $\d$ 
that we can obtain in this situation. 

First, we consider a rescaling of the norm $\| \ \|_{X^s}$: 
for any $\lm$ positive, let 
\begin{equation} \label{rema.6}
\lm \| u \|_{X^s} =: \| u \|_{Z^s}.
\end{equation}
Then \eqref{rema.1}, \eqref{rema.2} become 
\begin{equation} \label{rema.7}
\| \Psi(u) g \|_{Z^s} 
\leq (A \lm + B \lm^{1-p} \| u \|_{Z^{s_0}}^p) \| g \|_{Y^s} 
+ C \lm^{1-p} \| u \|_{Z^{s_0}}^{p-1} \| u \|_{Z^s} \| g \|_{Y^{s_0}}
\end{equation}
for all $u$ in the rescaled ball
\begin{equation} \label{rema.8}
\| u \|_{Z^{s_0}} \leq R \lm.
\end{equation}
From \eqref{rema.7}, \eqref{rema.8} we get the bound
\begin{equation} \label{rema.9}
\| \Psi(u) g \|_{Z^s} 
\leq (A \lm + B \lm R^p) \| g \|_{Y^s} 
+ C R^{p-1} \| u \|_{Z^s} \| g \|_{Y^{s_0}}
\end{equation}
for $u$ in the ball \eqref{rema.8}.
Then Theorem \ref{thm:NMH} solves the nonlinear problem 
for all data $g$ in the ball 
\begin{equation} \label{rema.10}
\| g \|_{Y^{s_0}} \leq \d(\lm), 
\end{equation}
where now the radius is 
\begin{equation} \label{rema.11}
\d(\lm) = \min \Big\{ \frac{R \lm}{L(\lm)}, \frac{1}{L(\lm)} \Big\}, 
\quad \ L(\lm) = \lm (A + B R^p) + C R^{p-1}.
\end{equation}
For $\lm \geq 1/R$, one has
\begin{equation} \label{rema.12}
\d(\lm) = \frac{1}{L(\lm)} 
= \frac{1}{\lm (A + B R^p) + C R^{p-1}}, 
\end{equation}
which is a decreasing function of $\lm$, 
so that $\d(\lm) \leq \d(1/R)$ for all $\lm \geq 1/R$. 
For $0 < \lm \leq 1/R$, one has
\begin{equation} \label{rema.13}
\d(\lm) 
= \frac{R \lm}{L(\lm)}
= \frac{R \lm}{\lm(A + B R^p) + C R^{p-1}} 
= \frac{R}{A + B R^p + C R^{p-1} \lm^{-1}},
\end{equation}
which is an increasing function of $\lm$, 
so that $\d(\lm) \leq \d(1/R)$ for all $\lm \in (0,1/R]$. 
In other words, the largest radius $\d(\lm)$ 
we can get by the rescaling \eqref{rema.6}
is attained at $\lm = 1/R$. 
Note that $\lm = 1/R$ is the value of $\lm$ corresponding to the unit ball 
$\| u \|_{Z^{s_0}} \leq 1$ in the rescaled norm \eqref{rema.8}.
For $\lm = 1/R$ we get the radius 
\begin{equation} \label{rema.14}
\d_R := \d(1/R) 
= \frac{1}{A R^{-1} + (B+C) R^{p-1}}. 
\end{equation}

Second, we check if taking $u$ in a smaller ball can give a better balance 
among the constants, and therefore a larger radius for the data. 
From \eqref{rema.1}, \eqref{rema.2} we deduce that,
for every $r \in (0, R]$,
\begin{equation} \label{rema.15}
\| \Psi(u) g \|_{X^s} \leq (A + B r^p) \| g \|_{Y^s} 
+ C r^{p-1} \| u \|_{X^s} \| g \|_{Y^{s_0}}
\end{equation}
for all $u$ in the ball
\begin{equation} \label{rema.16}
\| u \|_{X^{s_0}} \leq r.
\end{equation}
Apply the best rescaling of the form \eqref{rema.6}, 
which is 
\begin{equation} \label{rema.17}
\frac{1}{r} \| u \|_{X^{s_0}} =: \| u \|_{Z^s}.
\end{equation}
Then, by the discussion above, we obtain the radius
\begin{equation} \label{rema.18}
\d_r = \d(1/r)
= \frac{1}{A r^{-1} + (B+C) r^{p-1}}.
\end{equation}
To maximize the radius $\d_r$ in \eqref{rema.18}, 
we minimize its denominator 
$\ph(r) := A r^{-1} + (B+C) r^{p-1}$
over $r \in (0,R]$. 
For $p=1$, $\ph$ is decreasing in $(0,\infty)$, 
and then the largest $\d_r$ is attained at the largest $r$, 
namely $r=R$. 
For $p>1$, $\ph$ is decreasing in $(0, r_0)$ and increasing in $(r_0,\infty)$, 
where 
\begin{equation} \label{rema.20}
r_0 := \Big( \frac{A}{(p-1)(B+C)} \Big)^{\frac{1}{p}}.
\end{equation}
Hence $\min \{ \ph(r) : r \in (0,R] \}$
is attained at $r=r_0$ if $r_0 \leq R$, 
and at $r=R$ if $R \leq r_0$, 
namely at $r = \min \{ r_0, R \}$ in both cases.
Therefore the best radius is 
\begin{equation} \label{rema.21}
\max_{r \in (0,R]} \d_r
= \begin{cases} 
\d_R & \text{for $p=1$}, \\
\d_R & \text{for $p>1$ and $R \leq r_0$}, \\
\d_{r_0} & \text{for $p>1$ and $r_0 \leq R$}.
\end{cases}
\end{equation}
In fact, to apply the result of this discussion to a specific operator, 
the only point one has to check is whether $r_0 \leq R$ or vice versa.

In this way we get the best radius ignoring the contribution 
coming from $\Phi''(u)$, which is a condition of the form 
$\d \leq M^{-1} L^{-2}$ (see Theorem \ref{thm:NMH}).
Then one has to check if introducing this additional constrain 
to the radius $\d$ does not change its optimal size. 
The heuristic discussion of Section \ref{sec:antani} 
shows that, in many situations, this is the case.
\end{remark}

\begin{remark}
\label{rem:specific}
We see how the discussion of Remark \ref{rem:best rescaling}
applies to our specific problem. 

By \eqref{ball.1903.1} (ignoring the harmless constant $\rho_1$) 
and \eqref{est Psi.3} (ignoring $g_1$, which will be zero in the datum of the original nonlinear problem) one has
\[
A \sim \e^{\s}, \quad 
B \sim C \sim \e^{\s + \sigmaa -1 - d/2}, \quad 
R \sim \e^{1/p}. 
\]
This gives $r_0 \sim \e^{(1+d/2-\sigmaa)/p} \lesssim R$, 
and therefore the best choice is to restrict $u$ 
to the smaller ball $\| u \|_{X^{s_0+2}} \lesssim r_0$ 
and then to rescale as in \eqref{def mZs},
corresponding to $\lm = 1/r_0$.

In the previous case, 
by \eqref{NS.ball.3} and \eqref{NS.energy.h}
one has 
\[
A \sim 1, \quad 
B+C \sim \e^{-pq}, \quad 
R \sim \e^q, 
\]
with $q = 1/p + d/2$.  
This gives $r_0 \sim \e^q \sim R$, 
and therefore the best rescaling for the linearized operator is \eqref{NS.rescaled.1},
corresponding to $\lm = 1/R$. 
\end{remark}

\section{Proof of Theorem \ref{thm:with loss}}
\label{sec:proof with loss}

Let $p>1$, and define
\begin{align} 
E_{\anm,1} & := H^{s_0+\anm}(\R^d), 
\label{2603.X1}
\\
E_{\anm,2} & := \{ \tilde u \in C([0,T], H^{s_0+\anm}(\R^d)) 
\cap C^1([0,T], H^{s_0+\anm-2}(\R^d)) 
: \tilde u(0,x) = 0 \},
\label{2603.X2}
\\
E_\anm & := E_{\anm,1} \times E_{\anm,2}, 
\label{2603.X}
\\
F_{\anm,1} & := C([0,T], H^{s_0+\anm}(\R^d)), 
\label{2603.Y1}
\\ 
F_{\anm,2} & := H^{s_0+\anm+1}(\R^d),
\label{2603.Y2}
\\
F_\anm & := F_{\anm,1} \times F_{\anm,2}. 
\label{2603.Y}
\end{align}
We consider norms \eqref{def mZs} on $E_\anm$, namely 
\begin{equation} \label{recall.def mZs}
\| (\aff, \tilde u) \|_{E_\anm} := \e^{(\sigmaa - 1 - d/2)/p} 
(\e^\s \| \aff \|_{H^{s_0+\anm}} + \e^{-d/2} \| \tilde u \|_{C^1_\e H^{s_0+\anm}_\e}),
\end{equation}
and, on $F_\anm$, we define 
\begin{equation} \label{def Ys}
\| g \|_{F_\anm} 
= \| (g_1, g_2) \|_{F_\anm} 
:= \e^{-\s-d/2} \| g_1 \|_{C^0 H^{s_0+\anm}_\e} + \| g_2 \|_{H^{s_0+\anm+1}}
\end{equation}
(note that $\| \aff \|_{H^{s_0+\anm}}$ and $\| g_2 \|_{H^{s_0+\anm+1}}$ in 
\eqref{recall.def mZs} and \eqref{def Ys} are the standard Sobolev norms, 
without $\e$).
For $(\aff, \tilde u) \in E_\anm$ and $g = (g_1, g_2) \in F_\anm$, we define
\begin{equation} \label{def Sj.coppie}
S_j(\aff, \tilde u) := (S^1_j \aff, \, S^\e_j \tilde u), 
\quad 
S_j g := (S^\e_j g_1, S^1_j g_2),
\end{equation}
where $S^\e_j$, $S^1_j$ are the crude Fourier truncations 
$\e |\xi| \leq 2^j$, $|\xi| \leq 2^j$ respectively, 
namely 
\[
S_j^{\e} f(x) := (2\pi)^{-d/2} \int_{\e |\xi| \leq 2^j} \hat f(\xi) e^{i \xi \cdot x} \, d\xi,
\quad 
S_j^{1} f(x) := (2\pi)^{-d/2} \int_{|\xi| \leq 2^j} \hat f(\xi) e^{i \xi \cdot x} \, d\xi.
\]
Thus $S_j$ in \eqref{def Sj.coppie} satisfy 
all \eqref{S1}-\eqref{2705.4} with constants independent of $\e$. 

We consider the operator $\tilde \Phi$ defined in \eqref{def Phi tilde}. 
The ball \eqref{ball.2003.1} becomes 
\begin{equation} \label{ball.28}
\| (\aff, \tilde u) \|_{E_2} \leq \rho_2.
\end{equation}
For all $(\aff, \tilde u)$ in the ball \eqref{ball.28}, 
by \eqref{est Psi.7} 
the linearized problem $\tilde \Phi'(\aff, \tilde u)(\bff, \tilde h) = g$
has the solution $(\bff, \tilde h) =: \tilde \Psi(\aff, \tilde u) g$, 
which satisfies, for all $\anm \geq 0$,  
\begin{align}
\| \tilde \Psi(\aff, \tilde u) g \|_{E_\anm} 
& \lesssim_s 
\e^{\s + (\sigmaa - 1- d/2)/p} ( \| g \|_{F_\anm}
+ \| (\aff, \tilde u) \|_{E_{\anm + s_0 + 3}} \| g \|_{F_0} ),
\label{est Psi.10}
\end{align}
where we assume that $s_0 \geq 1$ and $s_0 > d/2$.
The second derivatives of $\tilde \Phi$ is
\[
\tilde \Phi''(\aff, \tilde u) [ (\bff_1, \tilde h_1), (\bff_2, \tilde h_2)]
= \begin{pmatrix} P''(u)[h_1, h_2] \\ 0 \end{pmatrix}
\]
where $u = \e^\s \mS \mT_{\e,c} \aff + \tilde u$ 
and $h_i = \e^\s \mS \mT_{\e,c} \bff_i + \tilde h_i$, $i=1,2$. 
By \eqref{est.der.sec.mZ} and \eqref{def Ys}, 
for $(\aff, \tilde u)$ in the ball \eqref{ball.28}, 
one has, for $\anm \geq 0$,  
\begin{align}
& \| \tilde \Phi''(\aff, \tilde u) [ (\bff_1, \tilde h_1), (\bff_2, \tilde h_2)] \, \|_{F_\anm}
= \e^{-\s-d/2} \| P''(u)[h_1, h_2] \|_{C^0 H^{s_0+\anm}_\e} 
\notag \\ & \quad \quad 
\lesssim_s \e^{-\s + (1+d/2-\sigmaa)/p}
\big\{ \| (\bff_1, \tilde h_1) \|_{E_{\anm+1}} \| (\bff_2, \tilde h_2) \|_{E_0}
+ \| (\bff_1, \tilde h_1) \|_{E_0} \| (\bff_2, \tilde h_2) \|_{E_{\anm+1}} 
\notag \\ & \qquad \quad \ 
+ \| (\aff, \tilde u) \|_{E_{\anm+1}}
\| (\bff_1, \tilde h_1) \|_{E_0} \| (\bff_2, \tilde h_2) \|_{E_0} \big\}.
\label{est.der.sec.2603.1}
\end{align} 
Hence $\tilde \Phi$ satisfies the assumptions of Theorem \ref{thm:NMH} 
with
\begin{gather}
\anm_0 = 0, \qquad 
\mu = \anm_1 = 2, \qquad 
\b = \a = s_0+3 > 4, \qquad 
\anm_2 > 2\b - 2, 
\notag \\
U = \{ (\aff,\tilde u) \in E_2 : \| (\aff,\tilde u) \|_{E_2} \leq \rho_2 \}, \qquad 
\d_1 = \rho_2, \qquad 
M_3(\anm) = L_3(\anm) = 0, 
\notag \\
M_1(\anm) = M_2(\anm) = C_\anm \e^{-\s + (1+d/2-\sigmaa)/p}, \quad 
L_1(\anm) = L_2(\anm) = C_\anm \e^{\s - (1+d/2-\sigmaa)/p}. 
\label{LMd.2}
\end{gather}
For any function $\aff_0 = \aff_0(x) \in H^{s_0+\b+1}(\R^d)$, 
the pair $g = (0, \aff_0) \in F_\b$  
trivially satisfies the first inequality in \eqref{2705.1} with $A=1$
(in fact, the inequality is an identity), 
because $\aff_0$ does not depend on the time variable. 
Hence, by Theorem \ref{thm:NMH}, 
for every $g = (0, \aff_0)$ in the ball 
\begin{equation} \label{ball.28.1}
\| \aff_0 \|_{H^{s_0+\b+1}} = \| g \|_{F_\b} \leq \d,  
\end{equation}
with 
\begin{equation} \label{delta.28}
\d = C \e^{-\s + (1 + d/2 - \sigmaa)/p}
\end{equation}
given by \eqref{qui.02}, 
there exists $(\aff, \tilde u) \in E_\a$ such that 
$\tilde \Phi(\aff, \tilde u) = \tilde \Phi(0,0) + g = (0, \aff_0)$. 
By \eqref{def Phi tilde}, this means that $\aff = \aff_0$
and the sum $u = \e^\s \mS \mT_{\e,c} \aff_0 + \tilde u$
solves the nonlinear Cauchy problem \eqref{Cp.general} 
on the time interval $[0,T]$ 
with initial datum $u(0) = u_0 = \e^\s \mT_{\e,c} \aff_0$.
By \eqref{qui.01}, 
\[
\| (\aff, \tilde u) \|_{E_\a} \leq C \e^{\s - (1+d/2-\sigmaa)/p} \| g \|_{F_\b},
\]
namely 
\[
\e^\s \| \aff_0 \|_{H^{s_0+\b}} + \e^{-d/2} \| \tilde u \|_{C^1_\e H^{s_0+\b}_\e} 
\leq C \e^\s \| \aff_0 \|_{H^{s_0+\b+1}},
\]
whence 
\[
\| \tilde u \|_{C^1_\e H^{s_0+\b}_\e} \leq C \e^{\s+d/2} \| \aff_0 \|_{H^{s_0+\b+1}}.
\]
All $\| \aff_0 \|_{H^{s_0+\b+1}} \leq 1$ belong to the ball \eqref{ball.28.1} 
if $1 \leq \d$, 
and this holds for $\e$ sufficiently small if
\[
\s > \frac{1 + d/2 - \sigmaa}{p}.
\]
The higher regularity part of Theorem \ref{thm:with loss} 
is also deduced from Theorem \ref{thm:NMH}.  

Finally, given $s_1 > \max \{ 6 , d+4 \}$, 
we define $s_0 := (s_1 - 4)/2$, 
so that $s_0 > \max \{ 1, d/2 \}$,
and the proof of Theorem \ref{thm:with loss} is complete.
\qed

\bigskip

\begin{remark}[\emph{Confirmation of the heuristics discussion 
of Section \ref{sec:antani} in Theorem \ref{thm:with loss}}] 
\label{rem:confirm.2}
The radius $\d$ given by the Nash-Moser Theorem \ref{thm:NMH} 
is the minimum among $1/L$, $\d_1 / L$, $1 / (L^2 M)$;
here (see \eqref{LMd.2}) 
these three quantities are all of order $\e^{-\s + (1 + d/2 - \sigmaa)/p}$. 
In particular, the ``quadratic condition'' $\d \leq 1 / (L^2 M)$,  
coming from the use of the second derivative $\Phi''(u)$ in the Nash-Moser iteration,
does not modify $\d$.  
This is a confirmation of the heuristic discussion 
of Section \ref{sec:antani}.
\end{remark}

For completeness, now we perform the same analysis in the case $p=1$. 
We consider the same function spaces \eqref{2603.X1}-\eqref{2603.Y} as above, 
but now we use norms \eqref{def Zs} on $E_\anm$, 
namely (see also \eqref{def Xs})
\begin{equation} \label{recall.def mZs.bis}
\| (\aff, \tilde u) \|_{\mE_\anm} := 
\e^{\s-1} \| \aff \|_{H^{s_0+\anm}} 
+ \e^{-1-d/2} \| \tilde u \|_{C^1_\e H^{s_0+\anm}_\e},
\end{equation}
and, on $F_\anm$, we define 
\begin{equation} \label{def Ys.bis}
\| g \|_{\mF_\anm} 
= \| (g_1, g_2) \|_{\mF_\anm} 
:= \e^{-\s-\sigmaa} \| g_1 \|_{C^0 H^{s_0+\anm}_\e} + \| g_2 \|_{H^{s_0+\anm+1}}.
\end{equation}
By \eqref{ball.1903.2}, \eqref{est Psi.9} and \eqref{est.der.sec.Z}, 
for $(\aff, \tilde u)$ in the ball
\begin{equation} \label{ball.28.3}
\| (\aff, \tilde u) \|_{\mE_2} \leq \rho_2,
\end{equation}
for $\anm \geq 0$ one has 
\begin{equation} \label{est.Psi.28}
\| \tilde \Psi(\aff, \tilde u) g \|_{\mE_\anm} 
\lesssim_s \e^{\s + \sigmaa - 1 - d/2} (\| g \|_{\mF_\anm} 
+ \| (\aff, \tilde u) \|_{\mE_{\anm+s_0+3}} \| g \|_{\mF_0})
\end{equation}
and 
\begin{align}
& \| \tilde \Phi''(\aff, \tilde u) [ (\bff_1, \tilde h_1), (\bff_2, \tilde h_2)] \, \|_{\mF_\anm}
= \e^{-\s-\sigmaa} \| P''(u)[h_1, h_2] \|_{C^0 H^{s_0+\anm}_\e} 
\notag \\ & \quad \quad 
\lesssim_s \e^{1-\s}
\big\{ \| (\bff_1, \tilde h_1) \|_{\mE_{\anm+1}} \| (\bff_2, \tilde h_2) \|_{\mE_0}
+ \| (\bff_1, \tilde h_1) \|_{\mE_0} \| (\bff_2, \tilde h_2) \|_{\mE_{\anm+1}} 
\notag \\ & \qquad \quad \ 
+ \| (\aff, \tilde u) \|_{\mE_{\anm+1}}
\| (\bff_1, \tilde h_1) \|_{\mE_0} \| (\bff_2, \tilde h_2) \|_{\mE_0} \big\}.
\label{est.der.sec.28.2}
\end{align} 
Hence $\tilde \Phi$ satisfies the assumptions of Theorem \ref{thm:NMH} 
with
\begin{gather*}
\anm_0 = 0, \qquad 
\mu = \anm_1 = 2, \qquad 
\b = \a = s_0+3 > 4, \qquad 
\anm_2 > 2\b - 2, 
\\
U = \{ (\aff,\tilde u) \in E_2 : \| (\aff,\tilde u) \|_{\mE_2} \leq \rho_2 \}, \qquad 
\d_1 = \rho_2, \qquad 
M_3(\anm) = L_3(\anm) = 0, 
\\
M_1(\anm) = M_2(\anm) = C_\anm \e^{1-\s}, \qquad 
L_1(\anm) = L_2(\anm) = C_\anm \e^{\s+\sigmaa-1-d/2}. 
\end{gather*}
Hence, by Theorem \ref{thm:NMH}, 
for every $g = (0, \aff_0)$ in the ball 
\begin{equation} \label{ball.28.2}
\| \aff_0 \|_{H^{s_0+\b+1}} = \| g \|_{\mF_\b} \leq \d
\end{equation}
with 
\begin{equation} \label{delta.28.2}
\d = C \e^{-\s + 1 + d - 2 \sigmaa}
\end{equation}
given by \eqref{qui.02}, 
there exists $(\aff, \tilde u) \in E_\a$ such that 
$\tilde \Phi(\aff, \tilde u) = (0, \aff_0)$. 
By \eqref{qui.01}, the solution $(\aff, \tilde u)$ satisfies 
\[
\| (\aff,\tilde u) \|_{\mE_\a} \leq C \e^{\s+\sigmaa-1-d/2} \| g \|_{\mF_\b},
\]
namely 
\[
\e^\s \| \aff_0 \|_{H^{s_0+\b}} + \e^{-d/2} \| \tilde u \|_{C^1_\e H^{s_0+\b}_\e} 
\leq C \e^{\s+\sigmaa-d/2} \| \aff_0 \|_{H^{s_0+\b+1}},
\]
whence 
\[
\| \tilde u \|_{C^1_\e H^{s_0+\b}_\e} \leq C \e^{\s+\sigmaa} \| \aff_0 \|_{H^{s_0+\b+1}}.
\]
All $\| \aff_0 \|_{H^{s_0+\b+1}} \leq 1$ belong to the ball \eqref{ball.28.2} 
if $1 \leq \d$, and this holds for $\e$ sufficiently small if
\[
\s > 1 + d - 2 \sigmaa. 
\]

\section{Appendix A. Nash-Moser-H\"ormander implicit function theorem}
\label{sec:NMH}

In this section we state the Nash-Moser-H\"ormander theorem of \cite{BH-NMH}. 

Let $(E_a)_{a \geq 0}$ be a decreasing family of Banach spaces with continuous injections  
$E_b \hookrightarrow E_a$, 
\begin{equation} \label{S0}
\| u \|_{E_a} \leq \| u \|_{E_b} \quad \text{for} \  a \leq b.	
\end{equation}
Set $E_\infty = \cap_{a\geq 0} E_a$ with the weakest topology making the 
injections $E_\infty \hookrightarrow E_a$ continuous. 
Assume that there exist linear smoothing operators $S_j : E_0 \to E_\infty$ for $j = 0,1,\ldots$, 
satisfying the following inequalities, 
with constants $C$ bounded when $a$ and $b$ are bounded, 
and independent of $j$,
\begin{alignat}{2}
\label{S1} 
\| S_j u \|_{E_a} 
& \leq C \| u \|_{E_a} 
&& \text{for all} \ a;
\\
\label{S2} 
\| S_j u \|_{E_b} 
& \leq C 2^{j(b-a)} \| S_j u \|_{E_a} 
&& \text{if} \ a<b; 
\\
\label{S3} 
\| u - S_j u \|_{E_b} 
& \leq C 2^{-j(a-b)} \| u - S_j u \|_{E_a} 
&& \text{if} \ a>b; 
\\ 
\label{S4} 
\| (S_{j+1} - S_j) u \|_{E_b} 
& \leq C 2^{j(b-a)} \| (S_{j+1} - S_j) u \|_{E_a} 
\quad && \text{for all $a,b$.}
\end{alignat}
Set 
\begin{equation}  \label{new.24}
R_0 u := S_1 u, \qquad 
R_j u := (S_{j+1} - S_j) u, \quad j \geq 1.
\end{equation}
Thus 
\begin{equation} \label{2705.3}
\| R_j u \|_{E_b} \leq C 2^{j(b-a)} \| R_j u \|_{E_a} \quad \text{for all} \ a,b.
\end{equation}
Bound \eqref{2705.3} for $j \geq 1$ is \eqref{S4}, 
while, for $j=0$, it follows from \eqref{S0} and \eqref{S2}.
We also assume that 
\begin{equation} \label{2705.4}
\| u \|_{E_a}^2 \leq C \sum_{j=0}^\infty \| R_j u \|_{E_a}^2	\quad \forall a \geq 0,
\end{equation}
with $C$ bounded for $a$ bounded 
(``orthogonality property'' for the smoothing operators).

Suppose that we have another family $F_a$ of decreasing Banach spaces with smoothing operators having the same properties as above. We use the same notation also for the smoothing operators. 

\begin{theorem}[\cite{BH-NMH}] \label{thm:NMH}
\emph{(Existence)} Let $a_1, a_2, \a, \b, a_0, \mu$ be real numbers with 
\begin{equation} \label{ineq 2016}
0 \leq a_0 \leq \mu \leq a_1, \qquad 
a_1 + \frac{\b}{2} \, < \a < a_1 + \b , \qquad 
2\a < a_1 + a_2. 
\end{equation}
Let $U$ be a convex neighborhood of $0$ in $E_\mu$. 
Let $\Phi$ be a map from $U$ to $F_0$ such that $\Phi : U \cap E_{a+\mu} \to F_a$ 
is of class $C^2$ for all $a \in [0, a_2 - \mu]$, with 
\begin{align} 
\|\Phi''(u)[v,w] \|_{F_a} 
& \leq M_1(a) \big( \| v \|_{E_{a+\mu}} \| w \|_{E_{a_0}} 
+ \| v \|_{E_{a_0}} \| w \|_{E_{a+\mu}} \big) 
\notag \\ & \quad 
+ \{ M_2(a) \| u \|_{E_{a+\mu}} + M_3(a) \} \| v \|_{E_{a_0}} \| w \|_{E_{a_0}}
\label{Phi sec}
\end{align}
for all $u \in U \cap E_{a+\mu}$, $v,w \in E_{a+\mu}$,
where $M_i : [0, a_2 - \mu] \to \R$, $i = 1,2,3$, are positive, increasing functions. 
Assume that $\Phi'(v)$, for $v \in E_\infty \cap U$ 
belonging to some ball $\| v \|_{E_{a_1}} \leq \d_1$,
has a right inverse $\Psi(v)$ mapping $F_\infty$ to $E_{a_2}$, and that
\begin{equation}  \label{tame in NM}
\| \Psi(v)g \|_{E_a} \leq 
L_1(a) \|g\|_{F_{a + \b - \a}} + 
\{ L_2(a) \| v \|_{E_{a + \b}} + L_3(a) \} \| g \|_{F_0}
\quad \forall a \in [a_1, a_2],
\end{equation}
where $L_i : [a_1, a_2] \to \R$, $i = 1,2,3$, 
are positive, increasing functions.

Then for all $A > 0$ there exists $\d > 0$ such that, 
for every $g \in F_\b$ satisfying
\begin{equation} \label{2705.1}
\sum_{j=0}^\infty \| R_j g \|_{F_\b}^2 \leq A^2 \| g \|_{F_\b}^2, \quad
\| g \|_{F_\b} \leq \d,
\end{equation}
there exists $u \in E_\a$ solving $\Phi(u) = \Phi(0) + g$.
The solution $u$ satisfies 
\begin{equation} \label{qui.01}
\| u \|_{E_\a} \leq C L_{123}(a_2) (1 + A) \| g \|_{F_\b}, 
\end{equation}
where $L_{123} = L_1 + L_2 + L_3$ 
and $C$ is a constant depending on $a_1, a_2, \a, \b$. 
The constant $\d$ is 
\begin{equation} \label{qui.02}
\d = 1/B, \quad 
B = C' L_{123}(a_2) \max \big\{ 1/\d_1, 1+A, (1+A) L_{123}(a_2) M_{123}(a_2-\mu) \big\}
\end{equation}
where $M_{123} = M_1 + M_2 + M_3$ 
and $C'$ is a constant depending on $a_1, a_2, \a, \b$.  
\\[2mm]
\emph{(Higher regularity)} Moreover, let $c > 0$
and assume that \eqref{Phi sec} holds for all $a \in [0, a_2 + c - \mu]$,
$\Psi(v)$ maps $F_\infty$ to $E_{a_2 + c}$, 
and \eqref{tame in NM} holds for all $a \in [a_1, a_2 + c]$. 
If $g$ satisfies \eqref{2705.1} and, in addition, $g \in F_{\b+c}$ with
\begin{equation} \label{0406.1}
\sum_{j=0}^\infty \| R_j g \|_{F_{\b+c}}^2 \leq A_c^2 \| g \|_{F_{\b+c}}^2 
\end{equation}
for some $A_c$, then the solution $u$ belongs to $E_{\a + c}$, 
with 
\begin{equation} \label{0211.10}	
\| u \|_{E_{\a+c}} \leq C_c \big\{ \mG_1 (1+A) \| g \|_{F_\b} 
+ \mG_2 (1+A_c) \| g \|_{F_{\b+c}} \big\} 
\end{equation}
where 
\begin{align} 
\mG_1 & := \tilde L_3 + \tilde L_{12} (\tilde L_3 \tilde M_{12} + L_{123}(a_2) \tilde M_3) 
(1 + z^{N}), 
\quad \mG_2 := \tilde L_{12} (1 + z^N), 
\label{qui.04}
\\ z & := L_{123}(a_1) M_{123}(0) + \tilde L_{12} \tilde M_{12},
\label{qui.03}
\end{align}
$\tilde L_{12} := \tilde L_1 + \tilde L_2$, 
$\tilde L_i := L_i(a_2+c)$, $i = 1,2,3$; 
$\tilde M_{12} := \tilde M_1 + \tilde M_2$, 
$\tilde M_i := M_i(a_2 + c - \mu)$, $i = 1,2,3$;
$N$ is a positive integer depending on $c,a_1,\a,\b$; 
and $C_c$ depends on $a_1, a_2, \a, \b, c$.
\end{theorem}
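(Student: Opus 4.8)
The plan is to prove Theorem~\ref{thm:NMH} by a Nash--Moser--H\"ormander iteration with smoothing, in the tame-estimate formulation going back to \cite{Geodesy}. First I would fix an increasing sequence of smoothing parameters, say dyadic $\theta_n$ with associated operators $S_n := S_{j(n)}$, and build recursively a sequence $u_0 := 0$, $u_{n+1} := u_n + w_n$, with correction $w_n := \Psi(S_n u_n)\, S_n h_n \in E_{a_2}$, where $h_n \in F_\infty$ is the remainder to be annihilated at step $n$. The subtlety here is that $\Psi$ is only defined at points of $E_\infty$ in the ball $\|v\|_{E_{a_1}} \le \d_1$, which forces the use of the smoothed point $S_n u_n$ in place of $u_n$. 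The remainders $h_n$ are chosen by the usual telescoping bookkeeping: starting from $g$, at each step one removes from $g$ the frequencies already treated and adds back the newly revealed portion of the accumulated error $\sum_{k<n} E_k$, where $E_k := \Phi(u_{k+1}) - \Phi(u_k) - \Phi'(S_k u_k)\,w_k$ is the linearization error at step $k$. With this choice $\Phi(u_n) - \Phi(0)$ equals $g$ up to (uniformly smoothed) tails of the $E_k$'s and a high-frequency tail of $g$, so convergence to $g$ follows once the $E_k$ are shown to be summable.

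The core of the proof is the inductive estimate. Decomposing $E_k = \int_0^1 (1-t)\,\Phi''(u_k + t w_k)[w_k, w_k]\,dt + \big(\Phi'(u_k) - \Phi'(S_k u_k)\big) w_k$ and writing the second piece as $\int_0^1 \Phi''\big(S_k u_k + t(I - S_k)u_k\big)\big[(I-S_k)u_k,\, w_k\big]\,dt$, I would bound $\|E_k\|_{F_a}$ for $a$ in the relevant range by combining the quadratic bound \eqref{Phi sec}, the tame right-inverse bound \eqref{tame in NM}, and the smoothing inequalities \eqref{S1}--\eqref{S3}. One then makes the ansatz $\|w_k\|_{E_a} \lesssim \d\, \theta_k^{a - \a - 1}$ on a scale $a \in [a_0, a_2]$, together with $\|u_k\|_{E_{a_1}} \le \d_1$ and $u_k \in U$, and verifies that it propagates with the same constants. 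The three inequalities \eqref{ineq 2016} are exactly what this requires: $\a < a_1 + \b$ makes $\sum_k \|w_k\|_{E_\a}$ converge; $\a > a_1 + \b/2$ is what makes the quadratic part of $\|E_k\|_{F_\b}$ decay like $\theta_k^{-(2\a - 2a_1 - \b)}$, fast enough to be reabsorbed into the scheme; and $2\a < a_1 + a_2$ lets one pay the loss in \eqref{tame in NM} at the top regularity $a_2$ while still gaining from smoothing. Smallness $\|g\|_{F_\b} \le \d$ with $\d$ as in \eqref{qui.02} is used to start the induction and, crucially, to keep all $u_k$ inside $U$ and inside the ball of radius $\d_1$ where $\Psi$ exists; the factor $M_{123}$ in \eqref{qui.02} enters precisely through the quadratic error, which is the rigorous counterpart of the condition $\d \lesssim 1/(L^2 M)$ discussed heuristically in Section~\ref{sec:antani}.

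From the inductive bounds one gets $u_n \to u$ in $E_\a$ and $\sum_k \|E_k\|_{F_\b} < \infty$, hence $\Phi(u_n) \to \Phi(0) + g$ in $F_0$; continuity of $\Phi : U \cap E_\mu \to F_0$ then yields $\Phi(u) = \Phi(0) + g$, and tracking constants through the induction gives the quantitative bound \eqref{qui.01}. For the higher-regularity statement I would re-run exactly the same iteration but additionally propagate the norms $\|w_k\|_{E_{a+c}}$, using that $\Psi(v)$ now maps into $E_{a_2+c}$ and that \eqref{Phi sec} holds up to $a_2 + c - \mu$; here the high-norm quantities are no longer summable but grow at most geometrically in $k$ with ratio $z$ as in \eqref{qui.03}, and interpolating between the convergent low norm and the geometrically growing top norm produces $u \in E_{\a+c}$ with \eqref{0211.10}, the power $z^N$ recording how many iteration steps the smoothing needs to overcome the loss $\b - \a$ at the highest regularity. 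The main obstacle is, as always with Nash--Moser, closing the induction: choosing the decay rates of $\|w_k\|_{E_a}$ consistently across the whole scale $a \in [a_0, a_2]$ (resp.\ $[a_0, a_2+c]$), and verifying that the quadratic error genuinely decays in the low norm $F_\b$ --- this is the single point that forces the balance of exponents in \eqref{ineq 2016}.
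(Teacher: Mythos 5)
A preliminary remark: this paper does not prove Theorem \ref{thm:NMH} at all; Appendix A merely states it, quoting \cite{BH-NMH}, so there is no ``paper's own proof'' here to compare line by line. The proof in \cite{BH-NMH} is indeed a H\"ormander-type scheme in the spirit of \cite{Geodesy}, and your skeleton matches it: iterates $u_{n+1}=u_n+w_n$ with $w_n=\Psi(S_nu_n)g_n$, the step error split into the Taylor term $\int_0^1(1-t)\Phi''(u_n+tw_n)[w_n,w_n]\,dt$ plus the substitution term $(\Phi'(u_n)-\Phi'(S_nu_n))w_n$ rewritten through $\Phi''$, and frequency-truncated reinjection of the accumulated errors into the data of later steps.

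The genuine gap is in the inductive bookkeeping, which is the heart of the theorem in this sharp form. Your ansatz $\|w_k\|_{E_a}\lesssim\d\,\theta_k^{a-\a-1}$ imposes a fixed geometric decay (a factor $\theta_k^{-1}$ at $a=\a$) that the data do not provide: with your own telescoping choice, the leading part of the $k$-th datum is essentially $R_kg$, and \eqref{tame in NM} combined with \eqref{2705.3} only yields $\|w_k\|_{E_a}\lesssim 2^{k(a-\a)}\|R_kg\|_{F_\b}+\cdots$, where for a general $g\in F_\b$ satisfying \eqref{2705.1} the numbers $\|R_kg\|_{F_\b}$ are merely square-summable, not $O(2^{-k})$. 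So the induction cannot close under the stated hypotheses; it would only close for more regular data (say $g\in F_{\b+1}$), i.e.\ it proves a strictly weaker statement. The correct propagated bound is of the form $\|w_k\|_{E_a}\le\epsilon_k\,2^{k(a-\a)}$ on $[a_1,a_2]$ with $(\epsilon_k)\in\ell^2$ and $\|(\epsilon_k)\|_{\ell^2}\lesssim(1+A)L_{123}(a_2)\|g\|_{F_\b}$ --- this is exactly where the $\ell^2$ condition in \eqref{2705.1} enters, and it is the only way the constant $A$ can appear in \eqref{qui.01}--\eqref{qui.02}; your sketch never uses $A$, which is a tell-tale sign. With this weaker bookkeeping $\sum_k\|w_k\|_{E_\a}$ need not converge, so $u\in E_\a$ and the bound \eqref{qui.01} are not automatic: they are recovered because each $w_k$ is essentially localized at frequency $2^k$, via the ``orthogonality'' property \eqref{2705.4} of the smoothings, another hypothesis your argument never invokes. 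The same issue affects the higher-regularity part: \eqref{0211.10} comes from running the $\ell^2$-weighted bookkeeping on the scale $[a_1,a_2+c]$, not from interpolating a geometric ansatz against a geometrically growing top norm. Your identification of where the three inequalities in \eqref{ineq 2016} are needed is plausible in outline, but to repair the proof you must replace the geometric ansatz by the $\ell^2$-weighted one, track $(\epsilon_k)$ through the quadratic and substitution errors, and use \eqref{2705.4} to conclude membership in $E_\a$.
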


\section{Appendix B. Commutator and product estimates}
\label{sec:app}

In the next lemmas we give ``asymmetric'' inequalities 
for the Sobolev norm of commutators and products of functions on $\R^d$, 
with $W^{m,\infty}$ norms ($m$ integer) on one function
and $H^s$ norms ($s$ real) on the other function. 
Estimate \eqref{KP style} is related to the Kato-Ponce inequality 
(see, e.g., \cite{Li}, \cite{Bourgain-Li}, \cite{D'Ancona}), 
but it is not clear how to deduce \eqref{KP style} 
directly from Kato-Ponce.
Hence we give here a proof of \eqref{KP style},
entirely based on well-known estimates.

\begin{lemma} 
\label{lemma:commutator}
Let $s \geq 0$ be real, 
and let $m$ be the smallest positive integer such that $m \geq s$.
Then there exists $C_s$ such that
\begin{equation}\label{KP style}
\| \Lm^s(u v) - u \Lm^s v \|_{L^2} 
\leq C_s ( \| u \|_{W^{1,\infty}} \| v \|_{H^{s-1}} 
+ \| u \|_{W^{m,\infty}} \| v \|_{L^2})
\end{equation}
for all $u \in W^{m,\infty}(\R^d)$, all $v \in H^{s-1}(\R^d) \cap L^2(\R^d)$. 
The constant $C_s$ is increasing in $s$, 
and it is bounded for $s$ bounded.

The same estimate holds with $\Lm^s$ replaced by $\Lm^{s-1} \pa_x^\a$, 
$|\a| = 1$, namely 
\begin{equation}\label{KP var}
\| \Lm^{s-1} \pa_x^\a (u v) - u \Lm^{s-1} \pa_x^\a v \|_{L^2} 
\leq C_s ( \| u \|_{W^{1,\infty}} \| v \|_{H^{s-1}} 
+ \| u \|_{W^{m,\infty}} \| v \|_{L^2}).
\end{equation}
\end{lemma}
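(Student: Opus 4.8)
The plan is to reduce everything to the Fourier side and to well-known estimates for $\Lm^s$ acting on products, exploiting that the ``bad'' high-frequency behaviour sits entirely on $v$. First I would introduce the commutator symbol: writing $\widehat{uv}(\xi) = (2\pi)^{-d/2}\int \hat u(\xi-\eta)\hat v(\eta)\,d\eta$, the operator $T := \Lm^s(u\,\cdot\,) - u\,\Lm^s$ has ``symbol'' $m(\xi,\eta) := \la\xi\ra^s - \la\eta\ra^s$ paired against $\hat u(\xi-\eta)$, where $\la\xi\ra := (1+|\xi|^2)^{1/2}$. The elementary inequality I would prove first is the pointwise bound
\[
\big| \la\xi\ra^s - \la\eta\ra^s \big|
\leq C_s \big( |\xi-\eta|\, \la\eta\ra^{s-1} + |\xi-\eta|^m \big)
\]
valid for all $\xi,\eta \in \R^d$, where $m = \lceil s \rceil$ is the smallest positive integer $\geq s$. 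For $|\xi-\eta| \leq \tfrac12\la\eta\ra$ this follows from the mean value theorem applied to $t \mapsto \la t\ra^s$ along the segment $[\eta,\xi]$, since $|\nabla \la\cdot\ra^s| \lesssim_s \la\cdot\ra^{s-1}$ there; for $|\xi-\eta| > \tfrac12 \la\eta\ra$ one has $\la\xi\ra \lesssim \la\xi-\eta\ra \lesssim |\xi-\eta| + 1$, so $\la\xi\ra^s + \la\eta\ra^s \lesssim_s (1 + |\xi-\eta|)^s \lesssim_s 1 + |\xi-\eta|^m$ (using $s \leq m$ and splitting $|\xi-\eta| \lessgtr 1$). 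This is the one place where the integer $m = \lceil s\rceil$ genuinely enters, and it is the technical heart of the argument, though it is a routine estimate.

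With this bound in hand, the commutator splits as $\|Tv\|_{L^2} \lesssim_s \|T_1 v\|_{L^2} + \|T_2 v\|_{L^2}$, where $T_1$ has symbol controlled by $|\xi-\eta|\la\eta\ra^{s-1}|\hat u(\xi-\eta)|$ and $T_2$ by $|\xi-\eta|^m |\hat u(\xi-\eta)|$. Each factor $|\zeta|^k |\hat u(\zeta)|$ is dominated (after distributing $|\zeta| \leq \sum_\ell |\zeta_\ell|$ and using $\widehat{\pa_{x_\ell}^k u}(\zeta) = (i\zeta_\ell)^k \hat u(\zeta)$ up to constants) by $\sum_{|\a|=k}|\widehat{\pa_x^\a u}(\zeta)|$; hence, dominating the convolution kernels pointwise and using that convolution by an $L^1$ function is bounded on $L^2$, together with $\|\widehat{\pa_x^\a u}\|_{L^1} \lesssim \|\pa_x^\a u\|_{W^{d_0,\infty}}$ is \emph{not} quite available with an $L^\infty$ norm only — so instead I would invoke the Bony/standard estimate $\|\Lm^{s-1}(\pa_x^\a u \cdot v)\|_{L^2} \lesssim_s \|\pa_x^\a u\|_{W^{1,\infty}}\|v\|_{H^{s-1}} + \dots$; cleaner is to argue directly: $T_1 v = c\sum_{\ell}\big[\Lm^{s-1}\big((\pa_{x_\ell}u)(\Lm^{?}\dots)\big)\big]$. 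A tidier route, which I would actually take, is to write $\la\eta\ra^{s-1}|\xi-\eta| = \la\eta\ra^{s-1}|\xi-\eta|$ and recognise $T_1$ as, up to bounded operators, a sum of compositions $\Lm^{-1}\pa_{x_\ell} \circ (\text{multiplication by } \pa_{x_\ell}u) \circ \Lm^{s-1}$ acting on $v$; since $\Lm^{-1}\pa_{x_\ell}$ and multiplication by the $L^\infty$ function $\pa_{x_\ell}u$ are both bounded on $L^2$, we get $\|T_1 v\|_{L^2} \lesssim \|u\|_{W^{1,\infty}}\|\Lm^{s-1}v\|_{L^2} = \|u\|_{W^{1,\infty}}\|v\|_{H^{s-1}}$, up to the lower-order remainder from the ``up to bounded operators'' step, which itself is again an $m$-th order commutator of $\Lm^{s-1}$ with $u$ and is absorbed by induction on $s$ (or by treating it with the same pointwise bound, now with exponent $s-1$). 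For $T_2$, the symbol is $\lesssim \sum_{|\a|=m}|\widehat{\pa_x^\a u}(\xi-\eta)|$ independent of $\eta$, so $T_2$ is (a sum of) multiplication operators $v \mapsto (\pa_x^\a u)\, v$ composed with the identity, whence $\|T_2 v\|_{L^2} \lesssim \|u\|_{W^{m,\infty}}\|v\|_{L^2}$ directly by Hölder. Monotonicity and boundedness of $C_s$ in $s$ is tracked through the constants in the pointwise bound and in the finitely many bounded operators $\Lm^{-1}\pa_{x_\ell}$, $\Lm^{s-1}$.

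The variant \eqref{KP var} with $\Lm^s$ replaced by $\Lm^{s-1}\pa_x^\a$, $|\a|=1$, is proved identically: the only change is the commutator symbol, now $\la\xi\ra^{s-1}(i\xi_j) - \la\eta\ra^{s-1}(i\eta_j)$ for $\a = e_j$, which satisfies the same pointwise bound $\lesssim_s |\xi-\eta|\la\eta\ra^{s-1} + |\xi-\eta|^m$ — split again on $|\xi-\eta| \lessgtr \tfrac12\la\eta\ra$ and apply the mean value theorem to $t \mapsto \la t\ra^{s-1}(it_j)$, whose gradient is $O_s(\la t\ra^{s-1})$. The remaining bookkeeping is word-for-word the same. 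The main obstacle, as noted, is organising the reduction so that the ``bounded operator'' manipulations ($\Lm^{-1}\pa_{x_\ell}$ on $L^2$, and peeling off lower-order commutators) close cleanly by induction on $s$ without circularity; once the pointwise symbol bound and this inductive scheme are set up, the rest is routine.
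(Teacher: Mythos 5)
Your heuristic identifies the right structure (two frequency regimes, first-order gain for $|\xi-\eta|\lesssim\la\eta\ra$, $m$ derivatives of $u$ otherwise), but the step that converts the pointwise symbol bound into an $L^2$ operator bound is not valid as written, and this is exactly where the difficulty of the lemma sits. Once you majorize the commutator kernel by $\big(|\xi-\eta|\la\eta\ra^{s-1}+|\xi-\eta|^m\big)|\hat u(\xi-\eta)|$ you have taken absolute values of $\hat u$, and from that point on the only available boundedness mechanism is Young's inequality, which needs $\| \, |\cdot|^k\hat u\,\|_{L^1}$ — not controlled by $W^{k,\infty}$ norms, as you yourself note. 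Your attempted repair for $T_2$ (``the symbol is $\lesssim\sum_{|\a|=m}|\widehat{\pa_x^\a u}(\xi-\eta)|$, so $T_2$ is a sum of multiplication operators, whence Hölder'') falls into the same trap: an operator whose kernel is merely dominated pointwise by $|\widehat{\pa_x^\a u}(\xi-\eta)|$ is \emph{not} the multiplication operator by $\pa_x^\a u$, and the dominating positive-kernel operator is in general unbounded on $L^2$ for $u\in W^{m,\infty}$ (boundedness of multiplication operators is destroyed by taking absolute values of the Fourier transform). For $T_1$, the identification ``up to bounded operators'' with $\Lm^{-1}\pa_{x_\ell}\circ(\pa_{x_\ell}u)\circ\Lm^{s-1}$, with the discrepancy ``absorbed by induction on $s$,'' is precisely the unproved content: the naive recursion, e.g.\ $[\Lm^s,u]=\Lm^{s-1}[\Lm,u]+[\Lm^{s-1},u]\Lm$, does not close to the stated right-hand side (the second term produces $\|v\|_{H^1}$ instead of $\|v\|_{L^2}$, and $[\Lm,u]$ on $H^{s-1}$ is again an estimate of the same kind), and no precise inductive scheme is given. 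Moreover the sharp cutoff $|\xi-\eta|\lessgtr\tfrac12\la\eta\ra$ defining your $T_1,T_2$ is itself a paraproduct-type operator whose $L^2$ mapping properties are not free.

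The rigorous way to implement exactly the splitting you have in mind is a genuine Littlewood--Paley/paraproduct decomposition, and this is what the paper does: it writes $uv=T_uv+(u-T_u)v$ and splits $\Lm^s(uv)-u\Lm^s v=[\Lm^s,T_u]v+\Lm^s((u-T_u)v)-(u-T_u)\Lm^s v$, then quotes Métivier's paradifferential estimates (Theorem 6.1.4 for $\|[T_u,\Lm^s]v\|_{L^2}\lesssim\|u\|_{W^{1,\infty}}\|v\|_{H^{s-1}}$, Theorem 5.2.8 for $\|(u-T_u)v\|_{H^m}\lesssim\|u\|_{W^{m,\infty}}\|v\|_{L^2}$) and handles the third term by duality together with Theorem 6.2.4 for $T_{u^*}-(T_u)^*$; the variant \eqref{KP var} follows by the same three bounds with $\Lm^{s-1}\pa_x^\a$ in place of $\Lm^s$. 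To fix your proof you would either have to reproduce this paradifferential machinery (smooth frequency cutoffs, symbol classes with limited regularity in $x$) or invoke a Kato--Ponce/Coifman--Meyer type theorem directly; the pointwise symbol estimate alone, with kernels majorized in absolute value, cannot yield \eqref{KP style}.
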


\begin{proof}
We use the standard paraproduct decomposition 
$u v = T_u v + (u - T_u) v$ 
(following M\'etivier \cite{Metivier}), 
and split 
\[
\Lm^s(uv) - u \Lm^s v 
= [\Lm^s, T_u] v + \Lm^s ((u-T_u) v) - (u-T_u) \Lm^s v.
\]
The commutator $[\Lm^s, T_u]$ satisfies 
\begin{equation} \label{0503.1}
\| [T_u, \Lm^s] v \|_{L^2} \leq C_s \| u \|_{W^{1,\infty}} \| v \|_{H^{s-1}}
\end{equation}
by Theorem 6.1.4 of \cite{Metivier}. 
The second term satisfies 
\begin{equation} \label{0503.2}
\| \Lm^s ((u-T_u) v) \|_{L^2} 
= \| (u-T_u) v \|_{H^s} 
\leq \| (u-T_u) v \|_{H^m}
\leq C_m \| u \|_{W^{m,\infty}} \| v \|_{L^2}
\end{equation}
by Theorem 5.2.8 of \cite{Metivier}.  
By duality, the third term is also bounded by the r.h.s.\ of \eqref{0503.2}: 
for all $h \in L^2$, by Cauchy-Schwarz,
\[
\langle (u-T_u) \Lm^s v , h \rangle_{L^2}
= \langle v , \Lm^s (u-T_u)^* h \rangle_{L^2}
\leq \| v \|_{L^2} \| (u-T_u)^* h \|_{H^s}
\leq \| v \|_{L^2} \| (u-T_u)^* h \|_{H^m}
\]
where $(u-T_u)^*$ is the adjoint of $(u-T_u)$ with respect to the $L^2$ scalar product. 
Split 
\begin{equation} \label{0603.split}
(u-T_u)^* = (u^* - T_{u^*}) + (T_{u^*} - (T_u)^*).
\end{equation}
The first component in the r.h.s.\ of \eqref{0603.split} satisfies 
\[
\| (u^* - T_{u^*}) h \|_{H^m} 
\leq C_m \| u^* \|_{W^{m, \infty}} \| h \|_{L^2} 
= C_m \| u \|_{W^{m, \infty}} \| h \|_{L^2} 
\]
by Theorem 5.2.8 of \cite{Metivier}. 
The second component in the r.h.s.\ of \eqref{0603.split} satisfies 
\[
\| (T_{u^*} - (T_u)^*) h \|_{H^m} 
\leq C_m \| u \|_{W^{m, \infty}} \| h \|_{L^2} 
\]
by Theorem 6.2.4 of \cite{Metivier}. 
Hence $\| (u-T_u)^* h \|_{H^m}$ is bounded by 
$C_m \| u \|_{W^{m, \infty}} \| h \|_{L^2}$,
and 
\[
\langle (u-T_u) \Lm^s v , h \rangle_{L^2}
\leq C_m \| u \|_{W^{m, \infty}} \| v \|_{L^2} \| h \|_{L^2}  
\]
for all $h \in L^2$. This implies that 
\begin{equation} \label{0503.3}
\| (u-T_u) \Lm^s v \|_{L^2} 
\leq C_m \| u \|_{W^{m, \infty}} \| v \|_{L^2}.
\end{equation}
The sum of \eqref{0503.1}, \eqref{0503.2} and \eqref{0503.3} 
gives \eqref{KP style}.

Similarly, one proves that \eqref{0503.1}, \eqref{0503.2} and \eqref{0503.3} 
also hold with $\Lm^s$ in the l.h.s.\ replaced by $\Lm^{s-1} \pa_x^\a$, $|\a|=1$. 
Then \eqref{KP var} follows.
\end{proof}

\begin{lemma} \label{lemma:prod s real}
Let $s \geq 0$ be real, 
and let $m$ be the smallest positive integer such that $m \geq s$.
Then 
\begin{equation}\label{bona senza eps}
\| u v \|_{H^s} 
\leq 2 \| u \|_{L^\infty} \| v \|_{H^s} 
+ C_s \| u \|_{W^{m,\infty}} \| v \|_{L^2}
\end{equation}
for all $u \in W^{m,\infty}(\R^d)$, all $v \in H^s(\R^d)$. 
The constant $C_s$ is increasing in $s$, 
and it is bounded for $s$ bounded. 

Moreover, for all $0 < \e \leq 1$, 
\begin{equation}\label{bona}
\| u v \|_{H^s_\e} 
\leq 2 \| u \|_{L^\infty} \| v \|_{H^s_\e} 
+ C_s \| u \|_{W^{m,\infty}_\e} \| v \|_{L^2}
\end{equation}
with the same constant $C_s$ as in \eqref{bona senza eps} 
(in particular, $C_s$ is independent of $\e$).
\end{lemma}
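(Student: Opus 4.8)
The plan is to derive \eqref{bona senza eps} from the commutator estimate \eqref{KP style} of Lemma~\ref{lemma:commutator}, and then to obtain the $\e$-weighted version \eqref{bona} by the dilation $R_\e$. First I would write $\Lm^s(uv) = u\,\Lm^s v + \big(\Lm^s(uv) - u\,\Lm^s v\big)$, which gives
\[
\|uv\|_{H^s} \le \|u\|_{L^\infty}\|v\|_{H^s} + \|\Lm^s(uv) - u\,\Lm^s v\|_{L^2}
\le \|u\|_{L^\infty}\|v\|_{H^s} + C_s\big(\|u\|_{W^{1,\infty}}\|v\|_{H^{s-1}} + \|u\|_{W^{m,\infty}}\|v\|_{L^2}\big)
\]
by \eqref{KP style}. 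It remains to absorb the ``mixed'' term $C_s\|u\|_{W^{1,\infty}}\|v\|_{H^{s-1}}$ into $\|u\|_{L^\infty}\|v\|_{H^s} + C_s'\|u\|_{W^{m,\infty}}\|v\|_{L^2}$: adding this bound to the one above produces precisely the coefficient $2$ in front of $\|u\|_{L^\infty}\|v\|_{H^s}$.

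For $0 \le s < 1$ this is immediate, since then $m=1$ and the Fourier weight $(1+|\xi|^2)^{(s-1)/2}$ is $\le 1$, so $\|v\|_{H^{s-1}} \le \|v\|_{L^2}$ and $C_s\|u\|_{W^{1,\infty}}\|v\|_{H^{s-1}} \le C_s\|u\|_{W^{m,\infty}}\|v\|_{L^2}$ directly. For $s \ge 1$ I would combine two interpolation inequalities: the elementary Fourier-side bound $\|v\|_{H^{s-1}} \le \|v\|_{H^s}^{1-1/s}\|v\|_{L^2}^{1/s}$, and the classical Gagliardo--Nirenberg (Landau--Kolmogorov) inequality on $\R^d$, $\|u\|_{W^{1,\infty}} \le C_m \|u\|_{L^\infty}^{1-1/m}\|u\|_{W^{m,\infty}}^{1/m}$ (valid because $m\ge1$). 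Multiplying them and rebalancing the exponents using $s\le m$ together with $\|u\|_{L^\infty}\le\|u\|_{W^{m,\infty}}$ yields
\[
\|u\|_{W^{1,\infty}}\|v\|_{H^{s-1}} \le C_m\,\big(\|u\|_{L^\infty}\|v\|_{H^s}\big)^{1-1/s}\big(\|u\|_{W^{m,\infty}}\|v\|_{L^2}\big)^{1/s}.
\]
A weighted Young inequality $X^{1-1/s}Y^{1/s} \le \eta X + c_s\eta^{-(s-1)}Y$, with $\eta$ chosen depending only on $s$ so that the coefficient of $\|u\|_{L^\infty}\|v\|_{H^s}$ comes out exactly $1$, then finishes \eqref{bona senza eps}; all constants produced are monotone and bounded for bounded $s$ (since the constant in \eqref{KP style} and $C_m$, with $m$ determined by $s$, are), which gives the claimed behaviour of $C_s$.

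For \eqref{bona} I would just rescale. Recall $(R_\e w)(x) = w(\e x)$ satisfies $R_\e(uv) = (R_\e u)(R_\e v)$, $\|w\|_{H^s_\e} = \e^{d/2}\|R_\e w\|_{H^s}$ by \eqref{FS.3}, $\|u\|_{W^{m,\infty}_\e} = \|R_\e u\|_{W^{m,\infty}}$ by \eqref{FS.6}, $\|u\|_{L^\infty} = \|R_\e u\|_{L^\infty}$, and $\|v\|_{L^2} = \e^{d/2}\|R_\e v\|_{L^2}$ (the $L^2_\e$ weight is $1$). Applying \eqref{bona senza eps} to the pair $R_\e u,\,R_\e v$ and multiplying through by $\e^{d/2}$ gives \eqref{bona} with literally the same $C_s$, hence independent of $\e$.

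The main obstacle is getting the coefficient $2$ rather than $1+C_s$: the crude bound $\|u\|_{W^{1,\infty}}\le\|u\|_{W^{m,\infty}}$ does not suffice, because interpolating $\|v\|_{H^{s-1}}$ down to $\|v\|_{L^2}$ forces borrowing from $\|v\|_{H^s}$, and unless the $\|u\|$-factor attached to that borrowed $\|v\|_{H^s}$ is the $L^\infty$ one, the leftover term has the wrong homogeneity in $u$. Routing the $u$-norm through the Gagliardo--Nirenberg inequality is exactly what fixes the homogeneity and allows the leftover to be absorbed with coefficient $1$ into $\|u\|_{L^\infty}\|v\|_{H^s}$.
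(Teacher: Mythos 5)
Your proof is correct and follows essentially the same route as the paper: split off the commutator, apply \eqref{KP style}, absorb the mixed term $\|u\|_{W^{1,\infty}}\|v\|_{H^{s-1}}$ by interpolation plus a weighted Young-type inequality with a parameter chosen to make the leading coefficient $2$, and deduce \eqref{bona} from \eqref{bona senza eps} via the rescaling $R_\e$. The only immaterial difference is the interpolation bookkeeping: you interpolate $v$ between $H^s$ and $L^2$ with exponent $1/s$ (handling $0\le s<1$ separately), whereas the paper uses the exponent $\lm=1/m$ in both factors, interpolating $v$ between $H^s$ and $H^{s-m}$.
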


\begin{proof} 
By triangular inequality and \eqref{KP style},
\begin{align}
\| uv \|_{H^s} 
= \| \Lm^s(uv) \|_{L^2}
& \leq \| \Lm^s(uv) - u \Lm^s v \|_{L^2} 
+ \| u \Lm^s v \|_{L^2} 
\notag \\ & 
\leq C_s (\| u \|_{W^{1,\infty}} \| v \|_{H^{s-1}}  
+ \| u \|_{W^{m,\infty}} \| v \|_{L^2}) 
+ \| u \|_{L^\infty} \| v \|_{H^s}.
\label{0503.4}
\end{align}
By standard interpolation, 
with $\lm = 1/m$, 
for all $K \geq 1$ one has
\begin{align*}
\| u \|_{W^{1,\infty}} \| v \|_{H^{s-1}}
& \leq \| u \|_{L^\infty}^{1-\lm} \| u \|_{W^{m,\infty}}^\lm 
\| v \|_{H^s}^{1-\lm} \| v \|_{H^{s-m}}^\lm 
\\
& = \frac{1}{K} \big( \| u \|_{L^\infty} \| v \|_{H^s} \big)^{1-\lm}
\big( \| u \|_{W^{m,\infty}} \| v \|_{H^{s-m}} K^m \big)^\lm
\\
& \leq \frac{1}{K} \big( \| u \|_{L^\infty} \| v \|_{H^s} 
+ \| u \|_{W^{m,\infty}} \| v \|_{H^{s-m}} K^m \big)
\\
& \leq \frac{1}{K} \, \| u \|_{L^\infty} \| v \|_{H^s} 
+ K^{m-1} \| u \|_{W^{m,\infty}} \| v \|_{L^2}
\end{align*}
($\| v \|_{H^{s-m}} \leq \| v \|_{L^2}$ 
because $s-m \leq 0$).
We fix $K$ larger or equal to the constant $C_s$ in \eqref{0503.4}, 
and we obtain \eqref{bona senza eps}.

Inequality \eqref{bona} is a straightforward consequence 
of \eqref{bona senza eps}, \eqref{FS.3}, \eqref{FS.6}
and the trivial rescaling identity for the product $R_\e(uv) = (R_\e u)(R_\e v)$.
\end{proof}

\begin{remark} \label{rem:parte intera + 1}
Let $s,m$ be as in Lemmas \ref{lemma:commutator}, \ref{lemma:prod s real}. 
Then $m \leq [s]+1$, where $[s]$ is the integer part of $s$ 
(it is $m=[s]$ for $s$ positive integer, 
and $m = [s]+1$ otherwise).
As a consequence, \eqref{KP style}, \eqref{bona senza eps} and \eqref{bona}
hold with $[s]+1$ in place of $m$. 
\end{remark}

We prove here some elementary inequalities we have used above.

\begin{lemma} \label{lemma:elementary.1}
For every real $s > 0$ there exists $C_s \geq 1$ such that 
\[
(a+b)^s \leq 2 a^s + C_s b^s 
\quad \ \forall a, b \geq 0. 
\]
The constant $C_s$ is increasing in $s$, 
with $C_s = 1$ for $0 < s \leq 1$, 
and $C_s \to \infty$ as $s \to \infty$.
\end{lemma}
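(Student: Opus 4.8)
The plan is to treat the ranges $0 < s \le 1$ and $s > 1$ separately, using concavity of $x \mapsto x^s$ in the first case and convexity in the second.

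For $0 < s \le 1$, the map $x \mapsto x^s$ is concave and vanishes at the origin, hence subadditive: $(a+b)^s \le a^s + b^s$ for all $a,b \ge 0$ (for $a+b>0$ this is $(a/(a+b))^s + (b/(a+b))^s \ge 1$, using $x^s \ge x$ for $x \in [0,1]$). Since $a^s \ge 0$ this already gives $(a+b)^s \le 2a^s + b^s$, so $C_s := 1$ works, and it trivially satisfies $C_s \ge 1$.

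For $s > 1$, the map $x \mapsto x^s$ is convex, and I would apply a weighted Jensen inequality with the weight tuned so that the coefficient of $a^s$ becomes exactly $2$: for $\lambda \in (0,1)$,
\[
(a+b)^s = \Bigl(\lambda\,\tfrac a\lambda + (1-\lambda)\,\tfrac b{1-\lambda}\Bigr)^s
\le \lambda\Bigl(\tfrac a\lambda\Bigr)^s + (1-\lambda)\Bigl(\tfrac b{1-\lambda}\Bigr)^s
= \lambda^{1-s} a^s + (1-\lambda)^{1-s} b^s ,
\]
and choosing $\lambda = 2^{-1/(s-1)}$ (so that $\lambda^{1-s}=2$) gives $(a+b)^s \le 2a^s + C_s b^s$ with the explicit value
\[
C_s := \bigl(1 - 2^{-1/(s-1)}\bigr)^{-(s-1)} .
\]
Since $1 - 2^{-1/(s-1)} \in (0,1)$ and the exponent is negative, $C_s > 1$; as $s \to 1^+$ one has $2^{-1/(s-1)} \to 0$, hence $C_s \to 1$, matching the value chosen on $(0,1]$; and as $s \to \infty$, $1 - 2^{-1/(s-1)} \sim (\ln 2)/(s-1)$, hence $C_s \sim \bigl((s-1)/\ln 2\bigr)^{s-1} \to \infty$.

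The one point requiring care is monotonicity. I would prove $s \mapsto C_s$ is increasing on $(1,\infty)$ by substituting $v := 1/(s-1)$, which reduces (after a short computation) $\tfrac{d}{ds}\log C_s \ge 0$ to showing that $\psi(v) := -v^{-1}\log(1-2^{-v})$ is decreasing on $(0,\infty)$; writing $w := 2^v - 1 > 0$, this in turn amounts to the elementary inequality $\ln w < \ln(1+w) + w^{-1}\ln(1+w)$, which is immediate for $w \le 1$ (left side $\le 0 <$ right side) and follows from $\ln w < \ln(1+w)$ for $w > 1$. Together with $C_s \equiv 1$ on $(0,1]$ and $C_s \to 1$ as $s \to 1^+$, this yields monotonicity on all of $(0,\infty)$. (If one prefers to avoid this computation, one can simply replace $C_s$ by $\sup_{1<\sigma\le s} C_\sigma$ for $s>1$, which is finite because $C$ is continuous up to $s=1$ with limit $1$, and which makes monotonicity automatic while preserving all other properties.) I expect this monotonicity/continuity bookkeeping near $s=1$ to be the only nontrivial part; the two defining inequalities are one-line applications of (sub)additivity and convexity.
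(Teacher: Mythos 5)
Your proof is correct, and it reaches exactly the paper's constant by a different mechanism. The paper normalizes (divide by $b^s$, set $\lambda=a/b$) and identifies the \emph{best} constant as a one--variable maximum, $C_s=\max\{(1+\lambda)^s-2\lambda^s:\lambda\ge 0\}$, which it then evaluates as $1$ for $0<s\le1$ and $2\,(2^{1/(s-1)}-1)^{-(s-1)}$ for $s>1$; you instead prove the inequality directly by subadditivity of $x\mapsto x^s$ on $(0,1]$ and by weighted Jensen on $(1,\infty)$ with the weight tuned so that the coefficient of $a^s$ equals $2$. The two constants coincide, since $\bigl(1-2^{-1/(s-1)}\bigr)^{-(s-1)}=\bigl(2^{-1/(s-1)}\bigr)^{-(s-1)}\bigl(2^{1/(s-1)}-1\bigr)^{-(s-1)}=2\,(2^{1/(s-1)}-1)^{-(s-1)}$, so your Jensen weight in fact realizes the optimal constant without performing the maximization; what the paper's route buys in exchange is precisely the knowledge that this constant is sharp. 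You also do more than the paper on the monotonicity claim, which the paper states without detail: your reduction via $v=1/(s-1)$, $w=2^v-1$ to $\ln w<\ln(1+w)+w^{-1}\ln(1+w)$ is correct, and in fact the conclusion is even more immediate than your case split suggests, since in $\frac{d}{dv}\bigl[-v^{-1}\log(1-2^{-v})\bigr]=-v^{-2}\bigl[v\,\tfrac{2^{-v}\ln 2}{1-2^{-v}}-\log(1-2^{-v})\bigr]$ both bracketed terms are positive, so $\psi$ is decreasing on $(0,\infty)$ with no further discussion; combined with $C_s\equiv1$ on $(0,1]$, $C_s>1$ and $C_s\to1$ as $s\to1^+$, this gives monotonicity on all of $(0,\infty)$, and your fallback $\sup_{1<\sigma\le s}C_\sigma$ would also be acceptable.
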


\begin{proof}
For $b=0$ the inequality is trivial.
For $b > 0$, divide by $b^s$ and set $\lm = a/b$.  
The inequality holds with best constant
$C_s = \max \{ (1+\lm)^s - 2 \lm^s : \lm \geq 0 \}$, 
which is $C_s = 1$ for $0 < s \leq 1$, 
and $C_s = 2 \cdot (2^{\frac{1}{s-1}} - 1)^{-(s-1)}$ for $s > 1$.
\end{proof}

\begin{lemma} \label{lemma:elementary.2}
For every $s > 0$ there exists $C_s \geq 1$ (increasing in $s$) 
such that 
\[
(1 + (a+b)^2)^s 
\leq 4 (1+a^2)^s + C_s b^{2s}
\quad \ \forall a, b \geq 0.
\]
\end{lemma}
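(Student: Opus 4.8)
The plan is to reduce Lemma~\ref{lemma:elementary.2} to Lemma~\ref{lemma:elementary.1} by a simple change of variable that replaces the additive shift inside the parenthesis by a genuine sum of two nonnegative quantities. First I would note that $1+(a+b)^2$ is the squared Euclidean norm of the vector $(1,a+b)=(1,a)+(0,b)$ in $\R^2$, so by the triangle inequality
\[
\sqrt{1+(a+b)^2}\ \le\ \sqrt{1+a^2}+b \qquad \forall\, a,b\ge 0.
\]
Equivalently, after squaring, this is just the elementary inequality $2ab\le 2b\sqrt{1+a^2}$, which holds because $a\le\sqrt{1+a^2}$; so this step requires essentially no work.

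Next I would raise both sides to the power $2s$ and apply Lemma~\ref{lemma:elementary.1} with exponent $2s$ (admissible since $2s>0$), with $\sqrt{1+a^2}$ and $b$ playing the roles of the two variables there. This gives
\[
(1+(a+b)^2)^s\ \le\ \big(\sqrt{1+a^2}+b\big)^{2s}\ \le\ 2\,(1+a^2)^s+C_{2s}\,b^{2s}\ \le\ 4\,(1+a^2)^s+C_{2s}\,b^{2s},
\]
where $C_{2s}$ denotes the constant of Lemma~\ref{lemma:elementary.1} with exponent $2s$. Hence the claim holds with $C_s:=C_{2s}$: this constant is $\ge 1$ and increasing in $s$, because the constant in Lemma~\ref{lemma:elementary.1} is $\ge 1$ and increasing in its exponent. (In fact the argument even yields the coefficient $2$ in place of $4$, which is more than enough for the application to \eqref{Sob sigma a}.)

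I do not expect any real obstacle. The only point to watch is that Lemma~\ref{lemma:elementary.1} must be invoked with the exponent $2s$, not $s$, precisely so that raising the triangle inequality to that power clears the square root; one then only has to check that $C_{2s}$ inherits the required monotonicity and lower bound, which is immediate from the statement of Lemma~\ref{lemma:elementary.1}.
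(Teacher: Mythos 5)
Your proof is correct. The chain $\sqrt{1+(a+b)^2}\le\sqrt{1+a^2}+b$ (triangle inequality in $\R^2$, or equivalently $2ab\le 2b\sqrt{1+a^2}$), followed by Lemma \ref{lemma:elementary.1} applied with exponent $2s$ to the pair $(\sqrt{1+a^2},\,b)$, gives $(1+(a+b)^2)^s\le 2(1+a^2)^s+C_{2s}b^{2s}$, and the constant $C_{2s}$ of Lemma \ref{lemma:elementary.1} is indeed $\ge 1$ and increasing in $s$, so the statement follows (even with $2$ in place of $4$). The paper's route is different in the intermediate step: instead of the triangle inequality, it splits the cross term by a parameterized Young inequality, $2ab\le\lm a^2+b^2/\lm$, bounds $1+(a+b)^2\le(1+a^2)(1+\lm)+b^2(1+1/\lm)$, applies Lemma \ref{lemma:elementary.1} with exponent $s$ itself, and then optimizes by choosing $\lm=2^{1/s}-1$ so that $(1+\lm)^s=2$; this is where the coefficient $4$ and the paper's explicit constant $2(2^{1/s}-1)^{-s}C_s$ come from. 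Your version avoids the parameter optimization entirely and yields a slightly cleaner bound; the paper's version keeps the application of Lemma \ref{lemma:elementary.1} at the original exponent $s$ and makes the $s$-dependence of the constant explicit through the choice of $\lm$. Either argument fully suffices for the only use of the lemma, namely \eqref{elem.1} and hence \eqref{Sob sigma a}.
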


\begin{proof} 
For all $\lm > 0$ one has $2ab = 2(a \lm^{1/2})(b \lm^{-1/2}) 
\leq a^2 \lm + b^2 / \lm$, 
whence 
\[
1 + a^2 + 2ab + b^2 
\leq 1 + a^2 (1 + \lm) + b^2 (1 + 1 / \lm)
\leq (1+a^2)(1+\lm) + b^2 (1 + 1 / \lm).
\]
By Lemma \ref{lemma:elementary.1}, 
\[
(1 + (a+b)^2)^s \leq 2 (1+\lm)^s (1+a^2)^s + C_s (1 + 1 / \lm)^s b^{2s}.
\]
Then we fix $\lm = 2^{1/s}-1$, so that $(1+\lm)^s = 2$ 
and $(1 + 1/\lm)^s = 2 \cdot (2^{1/s} - 1)^{-s}$.
\end{proof}

In the proof of Lemma \ref{lemma:mTeps} we have used 
Lemma \ref{lemma:elementary.2} in the form 
\begin{equation} \label{elem.1}
(1 + |\eta|^2 + 2 |\eta| |\xi_0| + |\xi_0|^2)^s 
\leq 4 (1 + |\eta|^2)^s + C_s |\xi_0|^{2s}, 
\quad \ \eta, \xi_0 \in \R^d.
\end{equation}
Also, by \eqref{elem.1} one directly proves the inequality
\begin{equation} \label{prod.solita}
\| uv \|_{H^s} \leq C_{s_0} \| u \|_{H^{s_0}} \| v \|_{H^s} 
+ C_s \| u \|_{H^s} \| v \|_{H^{s_0}},
\end{equation}
for $s \geq 0$, $s_0 > d/2$, 
which, by rescaling, implies inequality \eqref{FS.ss0}.

\begin{lemma} \label{lemma:prod pax}
For all $s \geq 0$ real,
all functions $u,v$ on $\R^d$, one has 
\begin{align} 
\| u \pa_x v \|_{H^{s-1}} 
& \lesssim_s \| u \|_{L^\infty} \| v \|_{H^s} 
+ \| u \|_{W^{[s]+1,\infty}} \| v \|_{L^2},
\label{prod pax senza eps}
\\
\| u \e \pa_x v \|_{H^{s-1}_\e} 
& \lesssim_s \| u \|_{L^\infty} \| v \|_{H^s_\e} 
+ \| u \|_{W^{[s]+1,\infty}_\e} \| v \|_{L^2},
\label{prod pax}
\end{align}
where $\pa_x$ denotes any $\pa_x^\a$, $|\a|=1$.
\end{lemma}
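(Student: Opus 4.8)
The plan is to reduce \eqref{prod pax senza eps} to the product estimate \eqref{bona senza eps} (equivalently \eqref{bona}) already at our disposal, repeating the interpolation device from the proof of Lemma \ref{lemma:prod s real}, and then to obtain the semiclassical inequality \eqref{prod pax} from \eqref{prod pax senza eps} by the rescaling $R_\e$, exactly as \eqref{bona} was deduced from \eqref{bona senza eps}.

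For the first step I would fix $|\a|=1$ and split
\[
u\,\pa_x^\a v = \pa_x^\a(uv) - (\pa_x^\a u)\,v .
\]
Since one derivative maps $H^s$ into $H^{s-1}$ with norm $\le 1$, the first term satisfies $\|\pa_x^\a(uv)\|_{H^{s-1}}\le\|uv\|_{H^s}$, and \eqref{bona senza eps} together with Remark \ref{rem:parte intera + 1} bounds this by $2\|u\|_{L^\infty}\|v\|_{H^s}+C_s\|u\|_{W^{[s]+1,\infty}}\|v\|_{L^2}$, already of the desired shape. For the second term I would distinguish two ranges. If $0\le s\le 1$, then $H^{s-1}\supseteq L^2$ and one simply bounds $\|(\pa_x^\a u)v\|_{H^{s-1}}\le\|(\pa_x^\a u)v\|_{L^2}\le\|\pa_x^\a u\|_{L^\infty}\|v\|_{L^2}\le\|u\|_{W^{[s]+1,\infty}}\|v\|_{L^2}$ (using $[s]+1\ge 1$). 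If $s>1$, I would apply \eqref{bona senza eps} once more, now with $s-1\ge 0$ in place of $s$ and $\pa_x^\a u$ in place of $u$; since $[s-1]+1=[s]$, the norm $\|\pa_x^\a u\|_{W^{[s-1]+1,\infty}}$ is controlled by $\|u\|_{W^{[s]+1,\infty}}$, so this gives $\|(\pa_x^\a u)v\|_{H^{s-1}}\le 2\|\pa_x^\a u\|_{L^\infty}\|v\|_{H^{s-1}}+C_s\|u\|_{W^{[s]+1,\infty}}\|v\|_{L^2}$.

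The only term not yet of the right form is the residual $\|\pa_x^\a u\|_{L^\infty}\|v\|_{H^{s-1}}\le\|u\|_{W^{1,\infty}}\|v\|_{H^{s-1}}$, and this is exactly where the interpolation trick of Lemma \ref{lemma:prod s real} is needed: with $m$ the smallest positive integer $\ge s$ and $\lambda=1/m$, Gagliardo--Nirenberg gives $\|u\|_{W^{1,\infty}}\le C_m\|u\|_{L^\infty}^{1-\lambda}\|u\|_{W^{m,\infty}}^{\lambda}$, while interpolation of Sobolev norms gives $\|v\|_{H^{s-1}}\le\|v\|_{H^s}^{1-\lambda}\|v\|_{H^{s-m}}^{\lambda}\le\|v\|_{H^s}^{1-\lambda}\|v\|_{L^2}^{\lambda}$ (using $s-m\le 0$). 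Interpolating both factors at the \emph{same} exponent $\lambda$ and applying Young's inequality, together with $m\le[s]+1$, yields $\|u\|_{W^{1,\infty}}\|v\|_{H^{s-1}}\le C_m\big(\|u\|_{L^\infty}\|v\|_{H^s}+\|u\|_{W^{[s]+1,\infty}}\|v\|_{L^2}\big)$. Summing the contributions proves \eqref{prod pax senza eps}; the semiclassical version \eqref{prod pax} then follows by noting that \eqref{FS.3} and \eqref{FS.6} give $R_\e(\e\pa_x^\a v)=\pa_x^\a(R_\e v)$ for $|\a|=1$, hence $R_\e(u\,\e\pa_x^\a v)=(R_\e u)\,\pa_x^\a(R_\e v)$, and applying \eqref{prod pax senza eps} to $R_\e u,R_\e v$ together with $\|R_\e u\|_{L^\infty}=\|u\|_{L^\infty}$, $\|R_\e u\|_{W^{m,\infty}}=\|u\|_{W^{m,\infty}_\e}$, $\e^{d/2}\|R_\e v\|_{H^s}=\|v\|_{H^s_\e}$ and $\e^{d/2}\|R_\e v\|_{L^2}=\|v\|_{L^2}$.

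The main obstacle, and the only point requiring care, is precisely the residual cross term above: a naive estimate $\|v\|_{H^{s-1}}\le\|v\|_{H^s}$ would leave a derivative of $u$ paired with the top Sobolev norm of $v$ and so would violate the asserted structure $\|u\|_{L^\infty}\|v\|_{H^s}+\|u\|_{W^{[s]+1,\infty}}\|v\|_{L^2}$; it is essential that $u$ and $v$ be interpolated simultaneously at the common exponent $1/m$, exactly as in the proof of Lemma \ref{lemma:prod s real}. Everything else is routine bookkeeping with integer parts and the scaling identities of Section \ref{sec:FS}.
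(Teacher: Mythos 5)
Your proposal is correct and follows essentially the same route as the paper: the split $u\,\pa_x v=\pa_x(uv)-(\pa_x u)v$, the product estimate \eqref{bona senza eps} with Remark \ref{rem:parte intera + 1} for both pieces, the case distinction $0\le s\le 1$ versus $s\ge 1$, and the simultaneous interpolation of $\| u \|_{W^{1,\infty}}$ and $\| v \|_{H^{s-1}}$ at a common exponent to absorb the cross term, followed by the $R_\e$ rescaling for \eqref{prod pax}. The only (inessential) difference is that you interpolate with exponent $1/m$ as in Lemma \ref{lemma:prod s real}, whereas the paper's writes the interpolation with endpoints $L^\infty$, $W^{[s]+1,\infty}$ and $H^s$, $H^{s-1-[s]}$.
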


\begin{proof} 
Write $u \pa_x v$ as $\pa_x (uv) - (\pa_x u) v$. 
For $s \geq 0$, by \eqref{bona senza eps} and Remark \ref{rem:parte intera + 1}, 
\begin{equation} \label{service.1}
\| \pa_x (uv) \|_{H^{s-1}} 
\leq \| uv \|_{H^s}
\lesssim_s \| u \|_{L^\infty} \| v \|_{H^s} 
+ \| u \|_{W^{[s]+1,\infty}} \| v \|_{L^2}.
\end{equation}
For $s \geq 1$, by \eqref{bona senza eps}
and Remark \ref{rem:parte intera + 1}, 
\begin{align}
\| (\pa_x u) v \|_{H^{s-1}} 
& \lesssim_s \| \pa_x u \|_{L^\infty} \| v \|_{H^{s-1}} 
+ \| \pa_x u \|_{W^{[s-1]+1,\infty}} \| v \|_{L^2}
\notag \\
& \lesssim_s \| u \|_{W^{1,\infty}} \| v \|_{H^{s-1}} 
+ \| u \|_{W^{[s]+1,\infty}} \| v \|_{L^2},
\label{service.2}
\end{align}
while for $0 \leq s \leq 1$
\begin{equation} \label{service.3}
\| (\pa_x u) v \|_{H^{s-1}} 
\leq \| (\pa_x u) v \|_{L^2} 
\leq \| \pa_x u \|_{L^\infty} \| v \|_{L^2} 
\leq \| u \|_{W^{1,\infty}} \| v \|_{L^2}.
\end{equation}
The sum of \eqref{service.1} and \eqref{service.3} 
gives \eqref{prod pax senza eps} for $s \in [0,1]$. 
For $s \geq 1$, the sum of \eqref{service.1} and \eqref{service.2} 
gives \eqref{prod pax senza eps} because, by interpolation, 
\[
\| u \|_{W^{1,\infty}} \| v \|_{H^{s-1}} 
\leq \| u \|_{L^\infty} \| v \|_{H^s} 
+ \| u \|_{W^{[s]+1,\infty}} \| v \|_{H^{s-1-[s]}}
\]
and $\| v \|_{H^{s-1-[s]}} \leq \| v \|_{L^2}$. 
Inequality \eqref{prod pax} can be proved similarly, 
or it can be deduced from \eqref{prod pax senza eps}
by rescaling. 
\end{proof}

\begin{lemma}
For all $s \geq 0$ real, one has
\begin{align}
\| [\Lm^s , u] \pa_x v \|_{L^2} 
& \lesssim_s \| u \|_{W^{1,\infty}} \| v \|_{H^s} 
+ \| u \|_{W^{[s]+2,\infty}} \| v \|_{L^2},
\label{0503.5}
\\
\| [\Lm^s_\e , u] \e \pa_x v \|_{L^2} 
& \lesssim_s \| u \|_{W^{1,\infty}_\e} \| v \|_{H^s_\e} 
+ \| u \|_{W^{[s]+2,\infty}_\e} \| v \|_{L^2},
\label{0503.6}
\end{align}
where $\pa_x$ denotes any $\pa_x^\a$, $|\a|=1$. 
\end{lemma}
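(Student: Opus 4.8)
The plan is to reduce both \eqref{0503.5} and \eqref{0503.6} to the commutator and product estimates already proved, namely \eqref{KP style}, \eqref{KP var} and \eqref{bona senza eps}, by means of a single algebraic identity, and then to pass from the first inequality to the second by the standard rescaling argument used throughout the paper.

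First I would fix $\pa_x=\pa_x^\a$ with $|\a|=1$ and record the identity
\[
[\Lm^s,u]\,\pa_x v = [\Lm^s\pa_x,u]\,v - \Lm^s\big((\pa_x u)\,v\big),
\]
which is immediate from $\Lm^s\pa_x=\pa_x\Lm^s$ and the Leibniz rule $\pa_x(uv)=(\pa_x u)v+u\,\pa_x v$. The gain is that the factor $\pa_x v$ has been absorbed into a commutator with the operator $\Lm^s\pa_x^\a=\Lm^{(s+1)-1}\pa_x^\a$, which is precisely of the form handled by \eqref{KP var} --- applied at regularity $s+1$ rather than $s$. Since the smallest positive integer $\ge s+1$ is at most $[s]+2$, this yields $\|[\Lm^s\pa_x,u]v\|_{L^2}\lesssim_s \|u\|_{W^{1,\infty}}\|v\|_{H^s}+\|u\|_{W^{[s]+2,\infty}}\|v\|_{L^2}$, which is exactly the claimed right-hand side.

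For the remaining term I would use $\|\Lm^s((\pa_x u)v)\|_{L^2}=\|(\pa_x u)v\|_{H^s}$ and invoke \eqref{bona senza eps} (together with Remark \ref{rem:parte intera + 1} to replace $m$ by $[s]+1$), bounding $\|\pa_x u\|_{L^\infty}\le\|u\|_{W^{1,\infty}}$ and $\|\pa_x u\|_{W^{[s]+1,\infty}}\le\|u\|_{W^{[s]+2,\infty}}$; summing the two contributions gives \eqref{0503.5}. Then \eqref{0503.6} follows by applying $R_\e$, using \eqref{FS.3}, \eqref{FS.6} and $R_\e(\e\pa_x^\a v)=\pa_x^\a(R_\e v)$ together with the fact that $R_\e$ commutes with pointwise multiplication, to get $R_\e\big([\Lm^s_\e,u]\,\e\pa_x v\big)=[\Lm^s,R_\e u]\,\pa_x(R_\e v)$, and then applying \eqref{0503.5} to $R_\e u,R_\e v$ and the identities $\|R_\e u\|_{W^{m,\infty}}=\|u\|_{W^{m,\infty}_\e}$, $\|R_\e v\|_{H^s}=\e^{-d/2}\|v\|_{H^s_\e}$, $\|R_\e v\|_{L^2}=\e^{-d/2}\|v\|_{L^2}$; the powers of $\e$ cancel and the constant is unchanged.

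I do not anticipate a genuine obstacle: the only subtle points are that one extra derivative on $u$ must be spent to absorb the $\pa_x$ (this is the source of the $W^{[s]+2,\infty}$ norm, one order higher than in \eqref{KP style}), and that \eqref{KP var} has to be used at regularity $s+1$; everything else is routine bookkeeping of already-established inequalities.
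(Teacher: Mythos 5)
Your proof is correct and follows essentially the same route as the paper: the identity $[\Lm^s,u]\pa_x v=[\Lm^s\pa_x,u]v-\Lm^s((\pa_x u)v)$, with \eqref{KP var} (used at regularity $s+1$, whence the $W^{[s]+2,\infty}$ norm) for the first term, \eqref{bona senza eps} for the second, and the standard $R_\e$ rescaling for \eqref{0503.6}. The extra bookkeeping you supply (the shift $s\mapsto s+1$ and the explicit rescaling identities) is exactly what the paper leaves implicit.
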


\begin{proof}
Write
\[
[\Lm^s , u] \pa_x v 
= [\Lm^s \pa_x , u] v - \Lm^s ((\pa_x u) v).
\]
By \eqref{KP var}, $\| [\Lm^s \pa_x , u] v \|_{L^2}$ 
is bounded by the r.h.s.\ of \eqref{0503.5};
by \eqref{bona senza eps}, $\| (\pa_x u) v \|_{H^s}$ 
is bounded by the r.h.s.\ of \eqref{0503.5}. 
Thus \eqref{0503.5} is proved.
Inequality \eqref{0503.6} 
follows from \eqref{0503.5} by rescaling. 
\end{proof}

\begin{lemma} \label{lemma:NS.commu}
For all $s \geq 0$, $s_0 > d/2$, one has
\begin{align}
\| [\Lm^s , u] v \|_{L^2} 
& \lesssim_s \| u \|_{H^{s_0+1}} \| v \|_{H^{s-1}} 
+ \| u \|_{H^s} \| v \|_{H^{s_0}},
\label{NS.commu}
\\
\| [\Lm^s_\e , u] v \|_{L^2} 
& \lesssim_s \e^{-d/2} (\| u \|_{H^{s_0+1}_\e} \| v \|_{H^{s-1}_\e} 
+ \| u \|_{H^s_\e} \| v \|_{H^{s_0}_\e}).
\label{NS.commu.eps}
\end{align}
The same inequalities also hold for $\Lm^{s-1} \pa_x^\a$, 
$\Lm^{s-1}_\e \e \pa_x^\a$, $|\a|=1$, 
in place of $\Lm^s, \Lm^s_\e$ respectively.
\end{lemma}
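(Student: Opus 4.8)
The plan is to first prove the unweighted estimate \eqref{NS.commu}, and then obtain the weighted one \eqref{NS.commu.eps} from it by the same scaling trick \eqref{FS.2}--\eqref{FS.3} used throughout this appendix. For \eqref{NS.commu} there are two short routes, and I would present one of them.

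\emph{Route A (quoting Kato--Ponce).} The classical Kato--Ponce commutator estimate gives, for all real $s>0$,
\[
\| \Lm^s(uv) - u\,\Lm^s v \|_{L^2} \lesssim_s \| \nabla u \|_{L^\infty} \| v \|_{H^{s-1}} + \| u \|_{H^s} \| v \|_{L^\infty}
\]
(see the references on the Kato--Ponce inequality cited above; for $s=0$ both sides vanish, so \eqref{NS.commu} is trivial). Since $s_0 > d/2$, the Sobolev embedding \eqref{FS.emb} (with $\e=1$) yields $\| \nabla u \|_{L^\infty} \lesssim_{s_0} \| u \|_{H^{s_0+1}}$ and $\| v \|_{L^\infty} \lesssim_{s_0} \| v \|_{H^{s_0}}$, and substituting these into the displayed inequality gives \eqref{NS.commu}. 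The variant with $\Lm^{s-1}\pa_x^\a$, $|\a|=1$, is identical, because $\Lm^{s-1}\pa_x^\a$ is a Fourier multiplier of order $s$ and the same commutator estimate holds for it.

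\emph{Route B (self-contained, in the spirit of Lemma \ref{lemma:commutator}).} Use the Bony decomposition $uv = T_u v + (u-T_u)v$ (following \cite{Metivier}), so that
\[
[\Lm^s,u]v = [\Lm^s,T_u]v + \Lm^s\big((u-T_u)v\big) - (u-T_u)\Lm^s v .
\]
The first term is controlled by the paraproduct commutator bound of \cite{Metivier}: $\| [\Lm^s,T_u]v \|_{L^2} \lesssim_s \| \nabla u \|_{L^\infty} \| v \|_{H^{s-1}} \lesssim_s \| u \|_{H^{s_0+1}} \| v \|_{H^{s-1}}$. For the second, split $(u-T_u)v = T_v u + R(u,v)$ and use the paraproduct bound $\| T_v u \|_{H^s} \lesssim_s \| v \|_{L^\infty} \| u \|_{H^s}$ together with the remainder bound $\| R(u,v) \|_{H^s} \lesssim_s \| u \|_{H^s} \| v \|_{H^{s_0}}$ (valid because $s \geq 0$ and $s_0 > d/2$), plus $\| v \|_{L^\infty} \lesssim_{s_0} \| v \|_{H^{s_0}}$, to get $\| \Lm^s((u-T_u)v) \|_{L^2} \lesssim_s \| u \|_{H^s} \| v \|_{H^{s_0}}$. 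For the third, write $(u-T_u)\Lm^s v = T_{\Lm^s v}u + R(u,\Lm^s v)$ with $w:=\Lm^s v \in H^{-1}$; the paraproduct bound $\| T_w u \|_{L^2} \lesssim \| w \|_{B^{-s_0-1}_{\infty,\infty}} \| u \|_{H^{s_0+1}}$ combined with the embedding $H^{-1}(\R^d) \hookrightarrow B^{-s_0-1}_{\infty,\infty}(\R^d)$ (here $s_0>d/2$ is used again), and the remainder bound $\| R(u,w) \|_{L^2} \lesssim \| u \|_{H^{s_0+1}} \| w \|_{H^{-1}}$, give $\| (u-T_u)\Lm^s v \|_{L^2} \lesssim_s \| u \|_{H^{s_0+1}} \| v \|_{H^{s-1}}$. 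Summing the three bounds yields \eqref{NS.commu}; as in Lemma \ref{lemma:commutator} one checks the constant is increasing in $s$ and bounded on bounded $s$-ranges, and the $\Lm^{s-1}\pa_x^\a$ variant follows in the same way.

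Finally, for the $\e$-weighted estimate \eqref{NS.commu.eps}, apply $R_\e$ as in \eqref{FS.2}: by \eqref{FS.3}, the trivial identity $R_\e(uv)=(R_\e u)(R_\e v)$, and (for the $\pa_x^\a$ case) \eqref{FS.6}, one gets $R_\e\big([\Lm^s_\e,u]v\big) = [\Lm^s,R_\e u](R_\e v)$; applying \eqref{NS.commu} to the right-hand side and converting back with $\| R_\e f \|_{L^2}=\e^{-d/2}\| f \|_{L^2}$ and $\| R_\e g \|_{H^\sigma}=\e^{-d/2}\| g \|_{H^\sigma_\e}$ (both from \eqref{FS.3}) produces the factor $\e^{-d/2}$ and exactly the weighted norms appearing in \eqref{NS.commu.eps}. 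The only step that takes some care is the third term of Route B, where $\Lm^s v$ carries negative Sobolev regularity and must be fed into the paraproduct and remainder estimates through the correct Besov embedding; Route A sidesteps this completely at the cost of invoking the Kato--Ponce commutator inequality.
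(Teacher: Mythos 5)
Your proposal is correct, but it proves the lemma by a genuinely different route than the paper. The paper's own proof is a short, self-contained Fourier-side computation: writing the commutator's Fourier transform with the symbol $\s(\xi,\eta)=\langle\xi+\eta\rangle^s-\langle\eta\rangle^s$, it splits frequency space into the two regions $|\xi|\le\tfrac12|\eta|$ (where $|\s|\lesssim_s\langle\eta\rangle^{s-1}|\xi|$, giving the term $\|u\|_{H^{s_0+1}}\|v\|_{H^{s-1}}$ via Cauchy--Schwarz and $s_0>d/2$) and $|\eta|<2|\xi|$ (where $|\s|\lesssim_s\langle\xi\rangle^s$, giving $\|u\|_{H^s}\|v\|_{H^{s_0}}$), and then gets \eqref{NS.commu.eps} by the same rescaling you describe; your identity $R_\e([\Lm^s_\e,u]v)=[\Lm^s,R_\e u](R_\e v)$ and the bookkeeping of the $\e^{-d/2}$ factor coincide exactly with the paper's ``follows by rescaling''. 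Your Route A (classical Kato--Ponce commutator estimate plus the embeddings $H^{s_0+1}\hookrightarrow W^{1,\infty}$, $H^{s_0}\hookrightarrow L^\infty$) is legitimate here, since unlike the $W^{m,\infty}$-flavoured Lemma \ref{lemma:commutator} this statement has Sobolev norms on both factors; what the paper's elementary symbol argument buys is independence from that citation and, more importantly, an immediate treatment of the variant $\Lm^{s-1}\pa_x^\a$, since one only replaces the symbol. That variant is the one soft spot in Route A: asserting that ``the same commutator estimate holds'' for an arbitrary Fourier multiplier of order $s$ is not a literal citation of Kato--Ponce (especially for $0\le s<1$, where one cannot reduce to $[\Lm^{s-1},u]\pa_x^\a v$), so you would need either a generalized multiplier version or an argument like the paper's two-region estimate. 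Your Route B (Bony decomposition, paradifferential commutator bound, paraproduct and remainder estimates with the Besov embedding $H^{-1}\hookrightarrow B^{-s_0-1}_{\infty,\infty}$) is also correct and handles the variant cleanly through the symbolic calculus, at the cost of invoking more paradifferential machinery than the paper's half-page Fourier argument.
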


\begin{proof} 
In the Fourier transform of $[\Lm^s, u] v$ one has 
$\hat u(\xi) \hat v(\eta) \s(\xi,\eta)$, where 
\[
\s(\xi,\eta) 
= \langle \xi + \eta \rangle^s - \langle \eta \rangle^s
= (1 + |\xi + \eta|^2)^{\frac{s}{2}} - (1+|\eta|^2)^{\frac{s}{2}}.
\]
For $|\xi| \leq \frac12 |\eta|$ one has 
$|\s(\xi,\eta)| \lesssim_s \langle \eta \rangle^{s-1} |\xi|$,
leading to the term $\| u \|_{H^{s_0+1}} \| v \|_{H^{s-1}}$ in \eqref{NS.commu}. 
For $|\eta| < 2 |\xi|$ one has 
$|\s(\xi,\eta)| \lesssim_s \langle \xi \rangle^s$, 
leading to the term $\| u \|_{H^s} \| v \|_{H^{s_0}}$ in \eqref{NS.commu}.  
Inequality \eqref{NS.commu.eps} follows by rescaling.
\end{proof}

\begin{lemma} \label{lemma:prod pax Hs}
For all $s \geq 0$ real,
all functions $u,v$ on $\R^d$, one has 
\begin{align} 
\| u \pa_x v \|_{H^{s-1}} 
& \lesssim_s \| u \|_{H^{s_0}} \| v \|_{H^s} 
+ \| u \|_{H^s} \| v \|_{H^{s_0}}
+ \| u \|_{H^{s_0+1}} (\| v \|_{H^{s-1}} + \| v \|_{L^2}),
\label{prod pax Hs senza eps}
\\
\| u \e \pa_x v \|_{H^{s-1}_\e} 
& \lesssim_s \e^{-d/2} \{ \| u \|_{H^{s_0}_\e} \| v \|_{H^s_\e} 
+ \| u \|_{H^s_\e} \| v \|_{H^{s_0}_\e}
+ \| u \|_{H^{s_0+1}_\e} (\| v \|_{H^{s-1}_\e} + \| v \|_{L^2}) \}
\label{prod pax Hs}
\end{align}
where $\pa_x$ denotes any $\pa_x^\a$, $|\a|=1$.
\end{lemma}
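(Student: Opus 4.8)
The plan is to follow the same two-step scheme as in the proof of Lemma \ref{lemma:prod pax}, but with the $W^{m,\infty}$ product estimate \eqref{bona senza eps} replaced by the product inequality \eqref{prod.solita}. First I would write $u\pa_x v = \pa_x(uv) - (\pa_x u)v$, with $\pa_x = \pa_x^\a$, $|\a| = 1$, and estimate the two terms separately.

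For the first term, since $\|\pa_x(uv)\|_{H^{s-1}} \leq \|uv\|_{H^s}$, inequality \eqref{prod.solita} directly gives $\|\pa_x(uv)\|_{H^{s-1}} \lesssim_s \|u\|_{H^{s_0}}\|v\|_{H^s} + \|u\|_{H^s}\|v\|_{H^{s_0}}$, which yields the first two summands on the right-hand side of \eqref{prod pax Hs senza eps}. For the second term I would distinguish two ranges of $s$. If $s \geq 1$, I apply \eqref{prod.solita} with the exponent $s-1$ in place of $s$ and use $\|\pa_x u\|_{H^{s_0}} \leq \|u\|_{H^{s_0+1}}$ and $\|\pa_x u\|_{H^{s-1}} \leq \|u\|_{H^s}$ to get $\|(\pa_x u)v\|_{H^{s-1}} \lesssim_s \|u\|_{H^{s_0+1}}\|v\|_{H^{s-1}} + \|u\|_{H^s}\|v\|_{H^{s_0}}$. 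If $0 \leq s \leq 1$, then $\langle\xi\rangle^{s-1} \leq 1$, so the crude estimate $\|(\pa_x u)v\|_{H^{s-1}} \leq \|(\pa_x u)v\|_{L^2} \leq \|\pa_x u\|_{L^\infty}\|v\|_{L^2} \lesssim_{s_0} \|u\|_{H^{s_0+1}}\|v\|_{L^2}$ holds, using the Sobolev embedding $H^{s_0} \hookrightarrow L^\infty$ for $s_0 > d/2$. Summing the two contributions proves \eqref{prod pax Hs senza eps}.

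Then \eqref{prod pax Hs} would follow from \eqref{prod pax Hs senza eps} by the usual rescaling $x \mapsto \e x$: one applies \eqref{prod pax Hs senza eps} to $R_\e u$ and $R_\e v$ and uses the elementary identity $R_\e(u\,\e\pa_x^\a v) = (R_\e u)\,\pa_x^\a(R_\e v)$ for $|\a| = 1$ (which comes from $\pa_x^\a(R_\e v) = \e\,R_\e(\pa_x^\a v)$), together with \eqref{FS.3} and the scaling $\|R_\e v\|_{L^2} = \e^{-d/2}\|v\|_{L^2}$. The left-hand side then carries a factor $\e^{-d/2}$, every two-norm product on the right carries $\e^{-d}$, and multiplying through by $\e^{d/2}$ reproduces exactly the $\e^{-d/2}$ prefactor of \eqref{prod pax Hs}.

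I do not expect a real obstacle: the only point requiring a little care is the threshold $s = 1$, where \eqref{prod.solita} is not applicable to $(\pa_x u)v$ with the negative exponent $s-1$, so the range $0 \leq s \leq 1$ must be treated by the direct $L^2$ bound above — this is precisely why the term $\|v\|_{L^2}$ appears next to $\|v\|_{H^{s-1}}$ in the statement — and the second, minor point is the bookkeeping of the powers of $\e$ in the rescaling step.
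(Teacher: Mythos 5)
Your proposal is correct and follows essentially the same route as the paper's proof: the decomposition $u\pa_x v = \pa_x(uv) - (\pa_x u)v$, the estimate of both pieces via \eqref{prod.solita} (with the direct $L^2$ bound and the Sobolev embedding $H^{s_0+1}\hookrightarrow W^{1,\infty}$ replacing it in the range $0\le s\le 1$), and the deduction of \eqref{prod pax Hs} by the rescaling $R_\e$. The splitting at $s=1$ and the $\e$-bookkeeping are exactly as in the paper, so there is nothing to add.
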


\begin{proof} 
We adapt the proof of Lemma \ref{lemma:prod pax}.
Write $u \pa_x v$ as $\pa_x (uv) - (\pa_x u) v$. 
For $s \geq 0$, by \eqref{prod.solita}, 
\begin{equation} \label{NS.service.1}
\| \pa_x (uv) \|_{H^{s-1}} 
\leq \| uv \|_{H^s}
\lesssim_s \| u \|_{H^{s_0}} \| v \|_{H^s} 
+ \| u \|_{H^s} \| v \|_{H^{s_0}}. 
\end{equation}
For $s \geq 1$, by \eqref{prod.solita},
\begin{align}
\| (\pa_x u) v \|_{H^{s-1}} 
& \lesssim_s \| \pa_x u \|_{H^{s_0}} \| v \|_{H^{s-1}} 
+ \| \pa_x u \|_{H^{s-1}} \| v \|_{H^{s_0}}
\notag \\
& \lesssim_s \| u \|_{H^{s_0+1}} \| v \|_{H^{s-1}} 
+ \| u \|_{H^s} \| v \|_{H^{s_0}},
\label{NS.service.2}
\end{align}
while for $0 \leq s \leq 1$
\begin{equation} \label{NS.service.3}
\| (\pa_x u) v \|_{H^{s-1}} 
\leq \| (\pa_x u) v \|_{L^2} 
\leq \| \pa_x u \|_{L^\infty} \| v \|_{L^2} 
\lesssim \| u \|_{H^{s_0+1}} \| v \|_{L^2}.
\end{equation}
Inequality \eqref{prod pax Hs} is deduced from \eqref{prod pax Hs senza eps}
by rescaling. 
\end{proof}

\begin{lemma} \label{lemma:NS.commu.B}
For all $s \geq 0$ real, one has
\begin{align}
\| [\Lm^s , u] \pa_x v \|_{L^2} 
& \lesssim_s \| u \|_{H^{s_0+1}} \| v \|_{H^s} 
+ \| u \|_{H^{s+1}} \| v \|_{H^{s_0}},
\label{NS.commu.B.senza eps}
\\
\| [\Lm^s_\e , u] \e \pa_x v \|_{L^2} 
& \lesssim_s \e^{-d/2} 
(\| u \|_{H^{s_0+1}} \| v \|_{H^s} 
+ \| u \|_{H^{s+1}} \| v \|_{H^{s_0}} )
\label{NS.commu.B}
\end{align}
where $\pa_x$ denotes any $\pa_x^\a$, $|\a|=1$. 
\end{lemma}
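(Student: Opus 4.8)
The plan is to follow, almost verbatim, the scheme used to prove \eqref{0503.5} above, simply replacing the $W^{m,\infty}$ ingredients there by the tame $H^s$ estimates collected in this appendix. I would start from the algebraic identity
\[
[\Lm^s, u]\, \pa_x v = [\Lm^s \pa_x, u]\, v - \Lm^s\big( (\pa_x u)\, v \big),
\]
valid for any first-order derivative $\pa_x = \pa_x^\a$, $|\a| = 1$, and then bound the two terms on the right-hand side separately.

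For the first term, writing $\Lm^s \pa_x^\a = \Lm^{\sigma - 1}\pa_x^\a$ with $\sigma = s+1 \geq 0$, I would apply Lemma \ref{lemma:NS.commu} in the variant with $\Lm^{\sigma-1}\pa_x^\a$ in place of $\Lm^\sigma$, which yields directly
\[
\| [\Lm^s \pa_x^\a, u]\, v \|_{L^2}
\lesssim_s \| u \|_{H^{s_0+1}} \| v \|_{H^s} + \| u \|_{H^{s+1}} \| v \|_{H^{s_0}}.
\]
For the second term, since $\| \Lm^s w \|_{L^2} = \| w \|_{H^s}$, I would estimate $\| (\pa_x u)\, v \|_{H^s}$ by the tame product inequality \eqref{prod.solita}, obtaining $\| (\pa_x u)\, v \|_{H^s} \lesssim_s \| \pa_x u \|_{H^{s_0}}\| v \|_{H^s} + \| \pa_x u \|_{H^s}\| v \|_{H^{s_0}}$, and then absorb the derivative via $\| \pa_x u \|_{H^{s_0}} \leq \| u \|_{H^{s_0+1}}$ and $\| \pa_x u \|_{H^s} \leq \| u \|_{H^{s+1}}$. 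Adding the two contributions proves \eqref{NS.commu.B.senza eps}.

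Inequality \eqref{NS.commu.B} would then follow from \eqref{NS.commu.B.senza eps} by the same rescaling used to pass from \eqref{0503.5} to \eqref{0503.6}: using $\Lm^s R_\e = R_\e \Lm^s_\e$, $\pa_x^\a R_\e = \e R_\e \pa_x^\a$ for $|\a|=1$, and $R_\e(uv) = (R_\e u)(R_\e v)$, one checks the conjugation identity $[\Lm^s_\e, u]\, \e\pa_x v = R_\e^{-1}\big( [\Lm^s, R_\e u]\, \pa_x(R_\e v) \big)$, and then uses $\| R_\e^{-1} g \|_{L^2} = \e^{d/2}\| g \|_{L^2}$ together with \eqref{FS.3} to produce the factor $\e^{-d/2}$. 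I do not expect any real obstacle: this is a routine adaptation of the earlier commutator lemma, and the only points requiring a little care are the index bookkeeping in the first term (reading Lemma \ref{lemma:NS.commu} with the shifted index $s+1$ and in its $\Lm^{\sigma-1}\pa_x^\a$ form, noting $\sigma - 1 = s \geq 0$) and tracking the powers of $\e$ in the rescaling.
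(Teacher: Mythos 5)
Your argument is exactly the paper's proof: the same splitting $[\Lm^s,u]\pa_x v = [\Lm^s\pa_x,u]v - \Lm^s((\pa_x u)v)$, the first term handled by the $\Lm^{s}\pa_x^\a$ variant of Lemma \ref{lemma:NS.commu} (read at the shifted index $s+1$), the second by the tame product estimate \eqref{prod.solita}, and the semiclassical inequality \eqref{NS.commu.B} obtained by the rescaling $R_\e$ exactly as you describe. The proposal is correct; your explicit index bookkeeping and conjugation identity are just a more detailed writing of the same steps.
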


\begin{proof}
Write $[\Lm^s , u] \pa_x v = [\Lm^s \pa_x , u] v - \Lm^s ((\pa_x u) v)$.
By Lemma \ref{lemma:NS.commu}, $\| [\Lm^s \pa_x , u] v \|_{L^2}$ 
is bounded by the r.h.s.\ of \eqref{NS.commu.B.senza eps};
by \eqref{prod.solita}, $\| (\pa_x u) v \|_{H^s}$ 
is also bounded by the r.h.s.\ of \eqref{NS.commu.B.senza eps}.
Thus \eqref{NS.commu.B.senza eps} is proved.
Inequality \eqref{NS.commu.B} follows by rescaling. 
\end{proof}

\bigskip

\begin{flushright}
\textbf{Pietro Baldi}

\smallskip

Dipartimento di Matematica e Applicazioni ``R. Caccioppoli''

Universit\`a di Napoli Federico II  

Via Cintia, 80126 Napoli, Italy

\smallskip

\texttt{pietro.baldi@unina.it} 

\bigskip
\smallskip

\textbf{Emanuele Haus}

\smallskip

Dipartimento di Matematica e Fisica,

Universit\`a di Roma Tre 

Largo San Leonardo Murialdo, 
00146 Roma, Italy

\smallskip

\texttt{ehaus@mat.uniroma3.it}
\end{flushright}

\end{document}